\documentclass[11pt]{amsart}

\input{preamble}

\title{Universal Cancellations in Uniform Random Waves} 
\author[L.~Gass, D.~Marinucci, G.~Peccati, F.~Pistolato, M.~Stecconi]{Louis Gass, Domenico Marinucci, Giovanni Peccati, Francesca Pistolato, Michele Stecconi}
\address{Department of Mathematics, Luxembourg University \\louis.gass@uni.lu}
\address{Department of Mathematics, University of Rome Tor Vergata and INdAM \\
marinucc@mat.uniroma2.it}
\address{Department of Mathematics, Luxembourg University \\giovanni.peccati@uni.lu}
\address{Department of Mathematics, Luxembourg University \\francesca.pistolato@uni.lu}
\address{Department of Mathematics, Luxembourg University \\michele.stecconi@uni.lu}

\date{\today}
\begin{document}
\maketitle
\begin{abstract}
A vast literature over the past fifteen years has been devoted to the study of the geometric properties of Gaussian random waves. In this work, we investigate the geometric behavior of \emph{uniform random waves}, a much less studied non-Gaussian model in which the $L^2$ norm is constrained to be exactly equal to one in every realization (a normalization that is natural from the standpoint of quantum mechanics). We show that this norm-constrained formulation has deep consequences for the universality of the so-called \emph{Berry's cancellation phenomenon}, as well as for novel high-frequency asymptotic variance estimates. These effects manifest themselves in both local geometric functionals, such as the Lipschitz--Killing curvatures, and global ones, such as the number of connected components above a fixed threshold. A key byproduct of our analysis is a new explicit relation between Hermite expansions and spherical harmonic decompositions for $0$-homogeneous functionals of Gaussian vectors, which enables a systematic chaos-based analysis of non-Gaussian random waves.

\itemize
\item{AMS subject classification (2020 MSC): 60G60, 33C55, 53C65, 60D05} 
\item {Keywords: Random Fields, Random Waves, Wiener Chaos Expansions, Lipschitz-Killing Curvatures, Betti Numbers}

\end{abstract}
\setcounter{tocdepth}{2}
\tableofcontents 

\newcommand{\lebf}{\tilde{f}}
\newcommand{\I}{\mathbb{I}}

\section{Uniform Random Waves}\label{sec:urw}

\subsection{Background and Motivations}

The geometric investigation {of} the excursion sets of random waves {(that is, of random (approximate) Laplace eigenfunctions)} on manifolds has been a very active area of research over the last fifteen years. In particular, remarkable interest has been drawn by the analysis of their zero sets, typically corresponding to a union of random smooth {\it nodal lines} or {\it(hyper)surfaces}; see \cite{wigman_survey_2024} for an overview. 

\smallskip 

Such a stream of research was initiated by a collection of groundbreaking conjectures in mathematical physics by Michael V. Berry, mostly rooted in his seminal 1977 contribution \cite{Berry1977}. The crucial insight in Berry’s work, nowadays known as {\it Berry’s random wave conjecture}, is that — in the high-energy limit — a deterministic Laplace eigenfunction on a {general} Riemannian manifold with chaotic geodesic flow should behave locally like an isotropic Gaussian random wave on a flat space. 

In recent years, several papers have tried to make this conjecture rigorous and to establish its validity under suitable circumstances: the reader is referred to, e.g. \cite{Abert2018, garcíaruiz2023relation, Ing21} for a characterization of Berry's conjecture in terms of {\it Benjamini-Schramm convergence}, and to \cite{canzani_hanin_2015_scalimkernel, canzani_hanin_2018_cinfscasymp, CH20} for results describing Euclidean isotropic waves as universal local limits of {\it Gaussian monochromatic waves} on Riemannian manifolds without conjugate points. We also mention the recent contribution \cite{Gass2023}, which deals with a class of {\it large-window models} (as opposed to the monochromatic setting mentioned above): it is proved there that a generic deterministic linear combination of Laplace eigenfunctions, chosen from a large window of close eigenvalues and evaluated in the neighborhood of a uniformly random point on a smooth compact manifold, exhibits Gaussian fluctuations in the high-energy limit; in particular, when the window expands at a sufficiently slow rate, these fluctuations are consistent with Berry’s conjecture.

\smallskip

An extensive part of the literature has focused on two-dimensional manifolds --- with particular emphasis on the Euclidean plane, the torus, and the 2-dimensional sphere (see, e.g., \cite{Wigman_2010, Kri13, MRW20, Dierickx2023, NourdinPeccatiRossi2019, MaRossiJFA, PV, NPV, MaWi2014, grotto2023nonlinearfunctionalshyperbolicrandom, smutek}), and further contributions extending the analysis to higher-dimensional settings (see, for instance, \cite{Cam19, Dalmao2021, MauriziaJTP, Not21}). While a substantial portion of this research has concentrated on nodal lengths (or, more generally, nodal volumes), many other geometric functionals have been investigated as well — including {\it excursion areas/volumes}, {\it Euler–Poincaré characteristics}, {\it critical points}, {\it Radon-type transforms}, and topological quantities such as the number of {\it connected components} (see e.g. \cite{ancona2025zeroscriticalpointsgaussian, wigman_survey_2024, Gass2023, GassStecconi2024, LETENDRE2016, BeliaevMcAuleyMuirheadAoP2024, Tacy23}). 

\smallskip 

As demonstrated, for instance, in \cite{Ber02, Cam19, Wig09, MaPeRoWi2015, NourdinPeccatiRossi2019, MRW20, Not21, DNPR_EJP}, most of the functionals mentioned above exhibit a specific and initially unexpected feature in the high-energy regime --- the so-called {\it Berry’s cancellation phenomenon}. This phenomenon is a reduction-of-variance effect that typically occurs for local or semi-local geometric functionals associated with the zero set of a random wave. In particular, at the zero threshold $u=0$, the variance of such functionals has been shown to be smaller by one order of magnitude (up to logarithmic factors) than the standard heuristic prediction, and indeed significantly lower than the variance of the same functionals evaluated at generic non-zero thresholds. The existence of this cancellation was first detected (at a physical level of rigour) in \cite{Ber02}, and then formally established in \cite{Wig09} for nodal lengths of spherical harmonics; it was then shown in \cite{MaPeRoWi2015} (for the torus) and \cite{MRW20, NourdinPeccatiRossi2019} (for the sphere and the Euclidean plane, respectively) that the phenomenon could be explained in terms of the disappearance at level zero of the so-called \emph{second-order chaotic term} in the decomposition of the nodal volume into {\it projections on Wiener chaoses} (see Section \ref{sec:modelmain} for formal definitions). Very recently, it has been proved in \cite{cgv2025StecconiTodino} that an analogous phenomenon occurs for nodal volumes associated with random waves defined on general Riemannian manifolds.

\smallskip 

The second-order chaotic terms evoked above can be shown to be roughly proportional to the (random) \(L^2\) norm of the associated eigenfunctions minus its mean; it is then intuitive that such a term must disappear (or collapse to a boundary term) at the zero level, since the corresponding functionals are invariant under a multiplicative rescaling of the field. The same phenomenon holds for a wide variety of different functionals, including the excursion area (see \cite{MaWi2011excarea,MaWi2014}), the number of critical points \cite{CammarotaMarinucciWigmanJGA2016}, the Euler-Poincaré characteristic \cite{CammarotaM2018}; similar cancellations occur also for non-local or semi-local quantities, such as the number of nodal domains (see \cite{NazarovSodin2009,NazarovSodin2016}), although so far they have not been connected to Wiener chaos decompositions, which are in principle only amenable to analysis for local functionals (see, however, the recent groundbreaking work \cite{mcauley2025limittheoremsnumbersign}, as well as \cite{BeliaevMcAuleyMuirheadAoP2024,BeliaevMcAuleyMuirheadPTRF2020}).

\smallskip

Given the role of normalization factors in the asymptotic behavior of geometric functionals, it is natural to ask whether Gaussianity provides the most appropriate framework for modeling random waves. We note that Gaussianity is a standard choice, strongly motivated both mathematically (by the Central Limit Theorem) and physically (for instance, the Gaussian behavior of certain random fields on ${S^2}$ is a key prediction of the {\it inflationary paradigm} of the standard Cosmological Model, see \cite{Marinucci_Peccati_2011}).  However, random waves originate in quantum mechanics, where eigenfunctions of the Laplace operator model quantum states and their squared moduli represent spatial probability densities. In this setting—formalized in the Dirac–von Neumann axioms (see, e.g. \cite{ReedSimon})—eigenfunctions are always normalized to unit norm, whereas Gaussian random waves do not satisfy this constraint at fixed eigenvalue: although their norms converge to one in the high-frequency limit, exact normalization is not imposed at the level of the model.

\smallskip 

The aim of this paper is therefore to investigate the geometry of random eigenfunctions under the exact unit-norm constraint. For reasons that will become clear later (see Remark \ref{r:uniform}), we refer to such fields as \emph{uniform random waves}. While their behavior coincides with that of Gaussian random waves in the nodal case, significant differences emerge at non-zero thresholds, and the resulting model aligns more naturally with the probabilistic interpretation of eigenfunctions as densities.

\smallskip 
The following list contains an informal description of some of the main achievements of our work. \smallskip \begin{itemize} 
\item[--] We will show that, for uniform random waves, a \emph{static form} of Berry’s cancellation (see Definition \ref{d:berrycancellation}) persists and is not limited to the zero level, but rather occurs at every threshold. Moreover, this cancellation holds for a wide class of geometric functionals—both local and non-local—including situations where the explicit form of their chaos expansion is either unknown or not tractable, such as for topological quantities (e.g., the number of connected components of level sets). See Theorem \ref{thm:Main1}, Corollary \ref{c:splendid1}, Theorem \ref{thm:Main2} and \cref{thm:louis2}. 
\item[--] We will establish a novel correspondence between two alternative decompositions of $0$-homogeneous functionals: one based on Wiener chaos expansions, and the other on high-dimensional spherical harmonics. This correspondence relies on the description of spherical harmonics in terms of traceless tensors, together with new combinatorial identities for Wick products. See Theorem \ref{t:nonmale} and Appendix \ref{A:SHproofs}.

\item[--] We will derive explicit Wiener chaos expansions for several geometric functionals evaluated on the excursion sets of uniform random waves, despite their intrinsically non-Gaussian nature. Our approach exploits the representation of uniform random waves as suitably renormalized Gaussian fields. See Proposition \ref{thm:Main3}, Theorem \ref{t:wienerchaos} and Proposition \ref{prop:secondchaos}. \item[--] The results described above will allow us to derive explicit lower bounds for the asymptotic variance of certain geometric functionals, such as the excursion area (see Proposition \ref{p:lowerbound}), and will also lead us to formulate a collection of natural conjectures concerning the joint behavior of geometric functionals at different thresholds and their high-frequency correlation structure, in the spirit of what was established for partial correlations in \cite{MarinucciRossiAHP2021}.

\end{itemize}

\medskip

Norm-constrained random waves have already appeared in the probability and mathematical physics literature. Our work is, however, the first to investigate their fluctuations from the perspective of Wiener chaos expansions and cancellation phenomena. The recent contributions \cite{FengAdler19, FengXuAdler18, FengYaoAdler2025} focus instead on tail estimates for the suprema of uniform random waves, obtained via embedding techniques. Norm-constrained random fields have also been considered in \cite{BurqLebeau13, Han17, Maples13, Zelditch96}, where they are used to derive generic analytic estimates for Laplace eigenfunctions on Riemannian manifolds.

\medskip

In forthcoming Section \ref{sec:modelmain}, we will provide more details on our model and state our main results in a concise form.

\subsection{Plan of the paper}

The structure of this paper is as follows: in Section \ref{sec:modelmain}
we introduce our model and illustrate our results 
with a high-level overview of the arguments and ideas behind the proofs. The latter are presented in a more complete form in Sections \ref{sec:main1}, \ref{sec:main2}, and \ref{sec:main3}; analytical tools of some independent interest are discussed in Section \ref{sec:magictrick} while further technical details are collected in two Appendices.

\subsection{Acknowledgments}
The research leading to this paper has been supported by:

\smallskip

\begin{itemize}
  \item[--] the PRIN project \emph{Grafia} (CUP: E53D23005530006);
  \item[--] the PRIN Department of Excellence \emph{MatMod@Tov} (CUP: E83C23000330006);
  \item[--] the Luxembourg National Research Fund (Grant: O24/18972745/GFRF).
\end{itemize}

\smallskip

\subsection{Notations}
\subsubsection{Random elements}\label{ss:randin} For the rest of the paper, every random element is assumed to be defined on a common probability space $(\Omega, \mathscr{F}, \P)$, with the symbol $\E$ denoting expectation with respect to $\P$. We write $A\stackrel{d}{=} B$ to indicate that $A,B$ have the same distribution. Given a topological space $\mathscr{X}$, we will write 
\be \label{e:randin}
Z\, \randin \, \mathscr{X}
\ee
to denote the fact that $Z$ is a measurable random element with values in $\mathscr{X}$.

\subsubsection{Notation for integrals, volumes and spheres}\label{subsec:notaintegrals}
Given a Riemannian manifold $(M,g)$ of dimension $d$ and $k\in \N$, we
denote the associated $k$-dimensional Hausdorff measure by $\vol{k}(\cdot)$ and we drop the superscript in the case $k=d$. Moreover, given a Borel function $F\colon M\to \R$, we will use the following shorthand notations: 
\begin{eqnarray}
&& \int_MF(x)dx:=\int_MF(x)\vol{d}(dx),\\ 
&& \vol{}(M):=\int_M1dx,\quad\mbox{and}\quad \fint_MF(x)dx:=\frac{1}{\vol{}(M)}\int_MF(x)dx;\notag
\end{eqnarray}

In particular, if $S\subset M$ is a submanifold of dimension $k$, then $\int_SF(x)dx$ stands for the integral with respect to the $k$-dimensional Hausdorff measure associated with $g$ (or, equivalently, to $g|_S$), unless otherwise specified. 
{Note that, although such notation is not standard, it will be adopted throughout the paper to make our formulas more readable while causing only minimal ambiguities.

We denote by $\mC^\infty(M)$ the space of real-valued smooth functions on $M$ and by $L^2(M)$ the Hilbert space of real valued measurable square-integrable functions on $M$, endowed with the usual $L^2$ inner product.

Many formulas in the paper concern the case in which $M=S(V)$ is the unit sphere in a metric vector space $V$, that is, $S(V):=\{x\in V : \|x\|=1\}$ (see, e.g., \cref{t:wienerchaos}). In particular, $S^d=S(\R^d)$ denotes the standard round sphere. Finally, we define the constants
\be\label{eq:spherevolume}
s_{d}:=\vol{}(S^d)=\frac{2\pi^{\frac{d+1}{2}}}{\Gamma\tyu \frac{d+1}{2}\uyt},
\ee
for any integer $d$.

\section{Assumptions, Terminology, and Main Results} \label{sec:modelmain}

\subsection{A general model}
\label{sec:ourmodel}

The main results of our paper apply to a general family of random fields on Riemannian manifolds, which we will now describe in full detail. In the subsequent Section \ref{ss:randomwaves}, we will provide an exact connection between our framework and the general concept of {\it random wave}. 

\smallskip 

Let $(M,g)$ be a compact Riemannian manifold such that $\dim M=d$, and let $$Y_{m}\in \mC^\infty(M), \quad m\in \{1,\dots, N\},$$ be the elements of a real-valued orthonormal family of smooth functions in $L^2(M)$, that is, the functions $Y_1,...,Y_N$ satisfy the following
\be \label{eq:Yorthogonality}
\textbf{Orthonormality Assumption:}\quad  \langle Y_{ m},Y_{m'}\rangle_{L^2(M)}=\int_M Y_{m}(x)Y_{m'}(x)\dd x = \delta_{m,m'},
\ee 
where $\delta_{m,m'}$ is the Kronecker delta. We use the symbol $Y(x)\in \R^{N}$ to indicate the vector having $Y_{m}(x)$ ($m=1,...,N$) as coordinates, and we denote the standard scalar product on $\R^{N}$ by $\langle \cdot, \cdot \rangle$, whereas $\|\cdot\| = \langle \cdot, \cdot \rangle^{1/2} $ stands for the associated norm. We will work under the following isotropy-like assumption:
\be \label{eq:YlmNorm}
\textbf{Constant {Norm} Assumption: }\quad \|Y(x)\|=\sqrt{\frac{N}{\vol{}(M)}}, \ \forall x\in M.
\ee
For notational convenience, we also introduce the unit vector 
\begin{equation}\label{e:unitvector}
\y(x):=Y(x) \|Y(x)\|^{-1} {= Y(x) \,\sqrt{\frac{\vol{}(M)}{N}}} \in S^{N-1}.
\end{equation}

\medskip

Now let $\gamma=(\gamma^1,\dots,\gamma^{N})\sim \m N(0,\mathbbm{1}_{N})$ denote a standard Gaussian vector in $\R^{N}$. {In what follows, we write $f = \{f(x) : x\in M\}$ to indicate the smooth Gaussian field obtained as the unit-variance renormalization of the random mapping $x\mapsto \sum_{m=1}^N \gamma^m Y_m(x)$, $x\in M$, that is:
\be \label{eq:deff}
    f(\cdot): =\langle \gamma, Y(\cdot)\rangle \,\sqrt{\frac{\vol{}(M)}{N}}
=\langle \gamma, \y(\cdot)\rangle
\ \randin \ \mC^\infty(M),
\ee
where we have adopted the notation \eqref{e:randin}. 

\begin{definition}[\bf Uniform fields]\label{def:unif}
Let the above notation and assumptions prevail. The random field $\lebf := \{\lebf(x) : x\in M\}$ defined by the relation
\be \label{e:fcontilda}
\lebf(\cdot):=\frac{f(\cdot)}{\| f\|_{L^2(M)}}{=\left\langle \frac{\gamma}{\|\gamma\|}, Y(\cdot)\right\rangle} 
\ \randin \ \mC^\infty(M)
\ee
(recall \eqref{e:randin}), is called the \emph{uniform random field} associated with $f$.

\end{definition}
}

\smallskip 
\begin{remark}\label{rmk:CM}
From the point of view of Gaussian analysis, assumption \eqref{eq:Yorthogonality} is equivalent to requiring that the Cameron-Martin Hilbert space $\m H_f$ (see \cite{bogachev}) of $\sqrt{N/\vol{}(M)}f$ is isometric to a finite-dimensional Hilbert subspace of $L^2(M)$ through the canonical inclusions $\m H_f\subset \mC^\infty(M)\subset L^2(M)$. 
\end{remark}
\begin{remark}\label{rem:varVSvol}
Assumption \eqref{eq:YlmNorm} is equivalent to the requirement that the mapping $x\mapsto \|Y(x)\|$ is constant: as a matter of fact, by virtue of \cref{eq:Yorthogonality} one has that $\int_M\|Y(x)\|^2\dd x=N$, from which \eqref{eq:YlmNorm} necessarily follows. As already observed, condition \eqref{eq:YlmNorm} is equivalent to the requirement that the field $f(\cdot)$ has constant unit variance, that is
\be 
\E |f(x)|^2 =\frac{\E \|f\|^2_{L^2(M)}}{\vol{}(M)}=1, \quad \forall x\in M.
\ee
By combining this observation with the previous \cref{rmk:CM}, we can rephrase the settings described above, including the two assumptions in \cref{eq:Yorthogonality} and \cref{eq:YlmNorm}, in the following informal way: \emph{$f$ is a smooth Gaussian field on a Riemannian manifold $M$, with unit variance, having a finite-dimensional subspace of $L^2(M)$ as Cameron-Martin space.} 
\end{remark}
{
\begin{remark}\label{r:uniform} The following fact is well-known (and justifies our choice of the term ``uniform'' to describe the random field $\lebf$): the two $N$-dimensional vectors
\be\label{e:coeffs}
\sqrt{\frac{\vol{}(M)}{N}}\cdot \frac{\gamma}{\|f\|_{L^2(M)}} =\left(\frac{\gamma^1}{\|\gamma\|},...,\frac{\gamma^N}{\|\gamma\|}\right) \quad\mbox{and}\quad \left(\left(\frac{\gamma^1}{\|\gamma\|}\right)^2,...,\left(\frac{\gamma^N}{\|\gamma\|}\right)^2\right),
\ee
are distributed, respectively, as a uniform random vector on the sphere $S^{N-1}$, and as a Dirichlet random vector with parameters $\left( \frac12,...,\frac12\right)$. In particular, for all $i = 1,...,N$, the coefficient $\left(\gamma^i/ \|\gamma\|\right)^2$ follows a Beta distribution with parameters $\left(\frac12, \frac{N-1}{2}\right)$ and has therefore expectation equal to $\frac1N$. We also observe that, by rotational invariance and for all ${\bf v}\in S^{N-1}$,
\be\label{e:rotinv}
\left\langle\frac{\gamma}{\|\gamma\|}, {\bf v} \right\rangle \stackrel{d}{=} \frac{\gamma^1}{\|\gamma\|}.
\ee
\end{remark}

\smallskip 

\begin{remark}\label{rmk:normalization}
 Our normalization conventions for $\tilde{f}$ and $f$ differ. Indeed, applying \eqref{e:rotinv} to ${\bf v} = \y(x)$ (as defined in \eqref{e:unitvector}), one deduces immediately that $$\lebf(x)\stackrel{d}{=} \sqrt{\frac{N}{\vol{}(M) }} \cdot \frac{\gamma^1}{\|\gamma\|}, \quad x\in M.$$ 
 As a consequence,
\be\label{e:normtilde} 
 \E |\lebf(x)|^2=\frac{1}{\vol{}(M)}.
 \ee
\end{remark}

\subsection{A field guide to random waves}\label{ss:randomwaves} {This section aims to precise in what sense certain random fields within the framework of Section \ref{sec:ourmodel} are examples of \emph{random waves}. 

\smallskip

The term `random wave' was originally used to denote Berry's model of a random Laplace eigenfunction on the plane~\cite{Berry1977,Ber02}. 
Since then, it has been extended to various ensembles of random fields, including random Gaussian spherical harmonics (spherical random waves; see, for instance,~\cite{NazarovSodin2009,MaRossiJFA,Marinucci_Peccati_2011,MaWi2014,MarinucciRossiAHP2021,gts2025LerarioMarinucciRossiStecconi, Wig09}) and random Laplace eigenfunctions on tori (arithmetic random waves; see, e.g.,~\cite{ORW2008,RudnickWigmanTorus,MaPeRoWi2015,DNPR_EJP,Kri13}). 
S.~Zelditch later introduced the term \emph{Riemannian random wave} in~\cite{Zel09}, thereby extending the notion to general manifolds (see also~\cite{canzani_hanin_2015_immersions,canzani_hanin_2015_scalimkernel,canzani_hanin_2018_cinfscasymp,canzani_MRWsurvey,Gass2023,cgv2025StecconiTodino}). 
Such random fields fall within the broader class of \emph{random band-limited functions} studied in~\cite{wigman_survey_2024,Canzani_Sarnak_2019,SarnakWigman2019,wigEBNRF}. 
In all these settings, the underlying field is Gaussian, and a distinguished subclass consists of \emph{monochromatic} fields, which are approximate eigenfunctions of the Laplacian. 
Nevertheless, small inconsistencies persist across sources, as no official definition or unified terminology exists. 
For the purpose of this paper, we extend the use of the term \emph{random wave} beyond Gaussian fields to include uniform fields (see~\cref{def:unif}); from now on, we will adopt the following definition, which encompasses all the aforementioned cases.

\begin{definition}[Random wave]\label{d:whatisarandomwave}
Let $(M,g)$ be a compact Riemannian manifold. Let $0=\lambda_0< \lambda_1\le \lambda_2\le \dots \le \lambda_i \to+\infty $ be the frequencies (i.e. $-\lambda^2_i$ are eigenvalues) of the Laplace-Beltrami operator $\Delta\colon \mC^\infty(M)\to\mC^\infty(M)$, and let {$\f_1,\f_2,\dots,\f_i,\dots$} be a $L^2(M)$-orthonormal basis of real eigenfunctions such that $\Delta \f_{i}=-\lambda_i^2 \f_i$.
We say that a random field $f\randin \mC^\infty(M)$ is a \emph{random wave} if: $\P\{f= 0\}=0$ and there exists a bounded interval $I\subset \R$ such that, for every measurable $B\subset \mC^\infty(M)$, 
\be \label{e:whatisarandomwave}
\P\kop \frac{f}{\|f\|_{L^2(M)}} \in B\pok =\frac{1}{s_{N-1}} \vol{N-1} \tyu \kop \begin{pmatrix}
    a_1 \\ \vdots 
    \\
    a_N
\end{pmatrix} \in S^{N-1} \Bigg|\ \sum_{\{i\in \N:\lambda_i\in I\}} a_i \f_i \in B \pok\uyt,
\ee
where $N$ is the cardinality of the set $\{i\in \N:\lambda_i\in I\}$, {and the symbol $s_{N-1}$ is defined in \eqref{eq:spherevolume} for $d=N-1$}. Furthermore, we call such a $f$ a \emph{Gaussian random wave} if $f$ is Gaussian, or a \emph{uniform random wave} if $\|f\|_{L^2(M)}=1$, a.s.-$\mathbb{P}$. We call $f$ a \emph{purely monochromatic random wave} of frequency $\lambda>0$ if $f$ is a random wave and a random eigenfunction, that is: $\Delta f=-\lambda^2 f$, a.s.-$\mathbb{P}$ (i.e., $I$ in \eqref{e:whatisarandomwave} can be chosen to be the singleton $I=\{\lambda\}$). 
\end{definition}

A Gaussian random wave $f$, with interval window $I$, can be taken as the field $f$ of \cref{eq:deff} (from the setting of \cref{sec:ourmodel}) if and only if it has constant unit variance: 
\be\label{eq:unitvar_rw} 
\E f(x)^2 =1,\ \forall x\in M.
\ee 
The representation of such an $f$ in the form \eqref{eq:deff} is obtained by choosing $(Y_1,\dots,Y_N)=(\f_{i_1},\dots,\f_{i_N})$, where $i_1 \leq \cdots \leq i_N$ are the ordered elements of the set $T:=\{i\in \N\colon \lambda_i\in I\}$, and $N= |T|$. Then, the Orthonormality Assumption (\cref{eq:Yorthogonality}) is automatically satisfied by \cref{d:whatisarandomwave}, while the Constant Norm Assumption (\cref{eq:YlmNorm}) is equivalent to \cref{eq:unitvar_rw}, see \cref{rem:varVSvol}. In such a situation, it is sometimes convenient to use an alternative labeling of the Gaussian variables, so that the following holds:
\be \label{eq:relabel}
f=\sqrt{\frac{\vol{}(M)}{N}}\sum_{j=1}^N\gamma^j Y_j = \sqrt{\frac{\vol{}(M)}{N}}\sum_{\{i\in \N\colon \lambda_i\in I\}}\xi^i \f_i.
\ee
{Analogously to the previously introduced convention, this is achieved by setting $(\gamma^1,\dots,\gamma^N)=(\xi^{i_1},\dots,\xi^{i_N})$,} where $\xi^i:=\sqrt{\frac{N}{\vol{}(M)}}\int_M f(x)\f_i(x)dx$. In particular, $$\int_Mf(x)dx=\xi^0\sqrt{\frac{\vol{}(M)}{N}}\delta_0(I).$$

\medskip

\begin{remark}\label{r:afterRW}
\begin{enumerate}[(a)]
\item
Typically, the models introduced in Definition~\ref{d:whatisarandomwave} are considered for families of intervals~$I_\lambda$, with $\max\{x : x\in  I_\lambda\} \to +\infty$. 
In this setting, the term \emph{monochromatic} refers to the case $I_\lambda = [\lambda - o(\lambda),\, \lambda]$, or, more restrictively, to $I_\lambda = [\lambda - 1,\, \lambda]$; see e.g. \cite{Zel09,wigEBNRF,SarnakWigman2019,wigman_survey_2024,canzani_hanin_2015_immersions, canzani_hanin_2015_scalimkernel, canzani_hanin_2018_cinfscasymp, CH20}. 
For the purposes of the present paper, we do not need to enter into this level of detail.
\item
For $d \geq 2$, consider the case $M = S^d$ and, for $\ell = 1, 2, \ldots$, define
\be\label{e:sphericalharmonics}
N_\ell := \frac{2\ell + d - 1}{\ell} \binom{\ell + d - 2}{\ell - 1},
\qquad 
Y_{\ell,\bullet} := \{ Y_{\ell,m} : m = 1, \ldots, N_\ell \},
\ee
where $Y_{\ell,\bullet}$ denotes the class of real-valued \emph{spherical harmonics} of degree~$\ell$, as defined for instance in~\cite[Chapter~9]{SpecialFunctions} or~\cite{MaRossiJFA,Marinucci_Peccati_2011}. 
As explained in these references, for every $\ell \geq 1$ the set $Y_{\ell,\bullet}$ forms an orthonormal basis of the eigenspace of the spherical Laplace-Beltrami operator $\Delta_{S^d}$ (acting on $\mathcal{C}^\infty(S^d)$) associated with the eigenvalue $-\ell(\ell + d - 1)$. In the following, we will denote such space as $\m H_{\ell,d+1}$, see \cref{def:sph_harm_HqN}.
By orthogonal invariance, one consequently deduces that $Y_{\ell,\bullet}$, if interpreted as a vector valued function $Y_{\ell,\bullet}\colon M\to \R^{N_\ell}$, satisfies both Assumptions~\ref{eq:YlmNorm} and~\ref{e:unitvector}. The corresponding random field~$f$ defined in~\eqref{eq:deff} coincides with the model of \emph{random spherical harmonics} considered, for instance, in~\cite{Marinucci_Peccati_2011,MaWi2014,MarinucciRossiAHP2021,Wig09,MaRossiJFA, MRW20, CammarotaMarinucciWigmanJGA2016, MaWi2011excarea, NazarovSodin2009}. 
In this setting, the model corresponding to the uniform random field~$\tilde{f}$ in \eqref{e:fcontilda} will be referred to as the model of \emph{uniform random spherical harmonics}; as already discussed in the Introduction, uniform random spherical harmonics have been studied in \cite{FengAdler19, FengXuAdler18, FengYaoAdler2025}.  In the terminology of Definition~\ref{d:whatisarandomwave}, the fields~$f$ and~$\tilde{f}$ are, respectively, a purely monochromatic Gaussian random wave, and a purely monochromatic uniform random wave. 
As already pointed out, understanding variance estimates and cancellation phenomena for geometric functionals of uniform random waves on the sphere constitutes one of the main motivations for the present work.

\item For $d\geq 2$, the model of \emph{arithmetic random waves} studied e.g. in \cite{rozensheinIMRN,Cam19, ORW2008, RudnickWigmanTorus, Kri13, MaPeRoWi2015, DNPR23, benatarMW, sartoriIMRN} can be obtained by considering the case $M = \mathbb{T}^d := \R^d/\mathbb{Z}^d$, and by selecting the orthonormal system
\be\label{e:aw}
Y_{(n)} := \{\sqrt{2}\cos( 2\pi\langle \lambda, \cdot \rangle_{\R^d}), \sqrt{2}\sin( 2\pi\langle \lambda, \cdot  \rangle_{\R^d}) : \{\lambda, -\lambda\} \in \Lambda_n/\pm \},    
\ee
where the integer $n\geq 1$ is such that $4\pi^2 n$ is an eigenvalue of the Laplacian $\Delta$ on $\mathbb{T}^d$, and $\Lambda_n/\pm$ indicates the quotient $\Lambda_n/ \mathbb{Z}_2$, where $\Lambda_n$ is the class of all $\lambda = (\lambda_1,...,\lambda_d)\in \mathbb{Z}^d$ such that $\|\lambda\|^2_{\R^d} = n$. Similarly to Point (b), the corresponding random fields $f$ and $\tilde{f}$, defined respectively in \eqref{eq:deff} and in \eqref{e:fcontilda}, are instances of purely monochromatic Gaussian random waves and purely monochromatic uniform random waves.

\item \label{itm:HomogeneouSpaz} The framework introduced in Section~\ref{sec:ourmodel} includes random waves, in the sense of Definition~\ref{d:whatisarandomwave}, defined on manifolds more general than the sphere or the torus (see Points~(b) and~(c)) --- although the Constant Norm Assumption~\eqref{eq:YlmNorm} imposes a significant geometric constraint, introducing a form of rigidity.\footnote{Most manifolds do not admit any transitive group action, let alone one by isometries. In fact, a generic Riemannian metric on a smooth manifold has only the identity as an isometry.} 
Manifolds for which all random waves naturally fit within our framework are e.g. the \emph{homogeneous} ones, that is, those for which for any pair of points $x,y \in M$, there exists an isometry $\iota \colon M \to M$ such that $\iota(x)=y$. Examples include: the (already mentioned) flat torus~$\mathbb{T}^d$ and any other flat torus; real, complex, and quaternionic projective spaces; higher Grassmannians; compact Lie groups; homogeneous spaces. 
A notable class that falls outside this setting is that of hyperbolic surfaces: since their isometry groups are finite, they are not homogeneous, and indeed Condition~\ref{eq:YlmNorm} fails in this case.

\item It should be noted that the above definition is intrinsically finite-dimensional, and hence it cannot include Berry's random waves on Euclidean spaces. 

\end{enumerate}
\end{remark}

\subsection{Wiener chaos}\label{sec:Wichaos}

The next definition is standard and will be used in forthcoming Definition \ref{d:berrycancellation} as a tool for encoding the notion of {\it cancellation} evoked in the Introduction. See e.g. \cite[Chapters 1 and 2]{nourdinpeccatibook} for more details.

\begin{definition}[\bf Hermite polynomials \& Wiener chaos]\label{d:wiener} For every integer $k\geq 0$, the $k$th (probabilistic) \emph{Hermite polynomial} $H_k : x\mapsto H_k(x)$ is defined according to the following recurrence relation:
$$
H_0\equiv 1, \quad H_{k}(x) := xH_{k-1}(x) - H'_{k-1}(x),\quad x\in \R, \,\,\, k\geq 1,
$$
so that $H_1(x) = x$, $H_2(x) = x^2-1$, $H_3(x) = x^3-3x$, $H_4(x) = x^4-6x^2+3$, and so on. Writing $\varphi(x):= (2\pi)^{-1/2} \exp(-x^2/2)$, one has that the family $\{H_k/\sqrt{k!} : k\geq 0\}$ is an orthonormal basis for the real Hilbert space $L^2(\R, \, \varphi(x) {\rm d} x)$, yielding the well-known relation
\be\label{e:orthohermite}
\int_\R H_k(x) H_{\ell}(x) \phi(x)\,{\rm d}x = k!\, \delta_{k,\ell}. 
\ee
Now fix a finite or countable index set $J$, and consider a family $\gamma = \{\gamma^j : j \in J\} $ of independent $\mathscr{N}(0,1)$ random variables. We write $L^2(\gamma)$ to indicate the Hilbert space composed of all (equivalence classes of) $\sigma(\gamma)$-measurable random variables $F = F(\gamma)$ such that $\E[F^2]<\infty$. For every $q\geq 0$, we will write $C_q$ to denote the $q$th \emph{Wiener chaos} associated with $\gamma$, where the spaces $C_q$ are defined as follows: $C_0=\mathbb{R}$ and, for all $q\geq 1$, $C_q$ is the subspace of $L^2(\gamma)$ generated by the random variables\footnote{Observe that, trivially, in the infinite product \eqref{e:bighermite} there can be at most $q$ indices $k_i\neq 0$. }
\be\label{e:bighermite}
\prod_{i\geq 1} H_{k_i}(\gamma^i), \quad \mbox{verifying}\quad \sum_{i\geq 1} k_i = q. 
\ee
\end{definition}

\medskip 

It is easily checked that, if $|J| = N<+\infty$, then $C_q$ is a finite-dimensional vector space, with dimension equal to the total number of \emph{weak compositions} of length $N$ of the integer $q$ (see e.g. \cite[p. 8]{PecTaqqBook}); {moreover, in this case, we adopt the \emph{multi-index notation}: we define $H_k(\gamma):= \prod_{i=1}^N H_{k_i}(\gamma^i)$, where $k\in \N^N$ is such that $|k|:=k_1+\ldots+k_N=q$, and $k!:= \prod_{i=1}^N k_i!$.} In general, by virtue of \eqref{e:orthohermite}, one has that $C_k \perp C_\ell $ for $k\neq \ell$, where the orthogonality relation is in the sense of $L^2(\gamma)$. For every $F\in L^2(\gamma)$, we will often use the compact notation
\be\label{e:projection}
F[q] := {\rm proj}\,(F \, |\, C_q), \quad q\geq 0,
\ee
where ``${\rm proj}$'' stands for an orthogonal projection; in particular, $F[0] = \E[F]$. The following fundamental relation is a direct consequence of the fact that the sequence $\{H_k/\sqrt{k!}\}$ is an orthonormal basis of $L^2(\R, \, \varphi(x) {\rm d}x)$, and is often referred to as the \emph{Wiener-It\^o chaos expansion} of $L^2(\gamma)$: one has that $L^2(\gamma) = \bigoplus_{q\geq 0}C_q$, that is, for every $F\in L^2(\gamma)$,
\be\label{e:wienerexp}
F = \sum_{q=0}^\infty F[q],
\ee
where the (orthogonal) series on the right-hand side converges in $L^2(\gamma)$. 

\medskip
 
\begin{remark}\label{r:doh} \begin{enumerate}[(a)]
\item The framework outlined in Section \ref{sec:ourmodel} corresponds to the case $J = \{1,...,N\}$, and $\gamma = (\gamma^1,...,\gamma^N)\sim\mathcal{N}(0, \mathbb{I}_N)$. In particular, any square-integrable random variable $F = F(f)$ depending on the random field $f$ defined in \eqref{eq:deff} (and therefore any square-integrable random variable depending on the uniform field $\tilde{f}$ introduced in \eqref{e:fcontilda}) is an element of the space $L^2(\gamma)$ and consequently admits the Wiener-It\^o decomposition \eqref{e:wienerexp}.
\item We recall that, for all $q\geq 1$, a {\it symmetric tensor} $K$ on $[N]^q$ is a symmetric mapping
$$
K : [N]^q\to \R : (i_1,...,i_q) \mapsto K(i_1,...,i_q)
$$
(in what follows, we write $\mathbb{T}(q,N)$ to indicate the collection of all symmetric tensors on $[N]^q$, noting that $\mathbb{T}(1,N)=\R^N$). If $\gamma = (\gamma^1,..., \gamma^N)\sim\mathcal{N}(0, \mathbb{I}_N)$, then it is a well-known fact that, for all $q\geq 1$, a random variable $X$ belongs to the $q$th Wiener chaos $C_q$ if and only if there exists a (unique) tensor $K_X \in \mathbb{T}(q,N)$ such that
\begin{equation}\label{e:chaosistensor}
X = \sum_{(i_1,..,i_q)\in[N]^q } K_X(i_1,...,i_q) \,\,: \gamma^{i_1}\cdots \gamma^{i_q} :\,\,,
\end{equation}
    where $: \gamma^{i_1}\cdots \gamma^{i_q} :$ indicates a {\it Wick product}, as defined in Appendix \ref{ss:AAWick}. See e.g. \cite[Chapters 2-3]{JansonBook} for a full discussion.  
    
    \item Let again $\gamma = (\gamma^1,..., \gamma^N)\sim\mathcal{N}(0, \mathbb{I}_N)$. The following relation is well-known: for all $q\geq 1$ and all $X,Y\in C_q$, noting $K_X, K_Y\in \mathbb{T}(q,N)$ the tensors associated with $X,Y$ via \eqref{e:chaosistensor}, one has that
\begin{equation}\label{e:innertenso}
\mathbb{E}[XY] = q! \sum_{(i_1,..,i_q)\in[N]^q } K_X(i_1,...,i_q) K_Y(i_1,...,i_q) := q! \, \langle K_X, K_Y\rangle,
\end{equation}
yielding that the mapping $C_q\to  \mathbb{T}(q,N) : X\mapsto K_X $ is indeed an isometry. For the sake of completeness, a proof of \eqref{e:innertenso} is sketched in Appendix \ref{ss:AAWick}.
\end{enumerate}
\end{remark}

\subsection{Wiener chaos and spherical harmonics under $0$-homogeneity} \label{ss:wcsh}

We now establish what appears to be a new connection between Wiener chaos (as defined in the previous section) and spherical harmonic expansions for
$0$-homogeneous functionals. To this end, for the rest of this section we fix an integer $N\geq 2$, write $\gamma\sim \mathcal{N}(0, \mathbb{I}_N)$, and introduce the shorthand notation 
\begin{equation}\label{e:unithing}
U = (U^1,...,U^N) := \frac{\gamma}{\|\gamma\|}. 
\end{equation}
As already observed in Remark \ref{r:uniform}, the vector $U$ has a uniform distribution on $S^{N-1}$. For $q\geq 1$, we say that a tensor $K\in \mathbb{T}(q,N)$ is {\it traceless} if
\begin{equation}\label{e:traceless}
\sum_{j=1}^N K(j,j, i_3,...,i_{q}) = 0, \quad \forall i_3,...,i_{q}\in [N]
\end{equation}
(note that, since $K$ is symmetric, the position of the two matching indices $j,j$ in the argument of $K$ is immaterial). We use the symbol $\mathbb{T}_0(q,N)$ to denote the class of traceless symmetric tensors, and note that $\mathbb{T}_0(1,N) = \mathbb{T}(1,N) = \R^N$. 

\begin{remark}\label{r:projectiong} Since $\mathbb{T}_0(q,N)$ is a linear subspace of $\mathbb{T}(q,N)$, one has the following orthogonal decomposition: for all $K\in \mathbb{T}(q,N)$,
\begin{equation}\label{e:tracelessD}
K = {\rm proj}(K\, |\, \mathbb{T}_0(q,N)) + {\rm proj}(K\, |\, \mathbb{T}_0(q,N)^{\perp}) := K^{\rm (TL)} + K^{\rm (Tr)},
\end{equation}
where ``${\rm proj}$'' stands for an orthogonal projection with respect to the inner product $\langle \cdot, \cdot\cdot \rangle$ defined in \eqref{e:innertenso}. For instance, in the case $q=2$, one has that $$K^{\rm (TL)} = K - \frac{{\rm Tr}\,K}{N} {\mathbb{I}_N}.$$
\end{remark}
\begin{definition}[See e.g. Chapter IV in \cite{SteinWeissBook}]\label{def:sph_harm_HqN} {Consistently with the notation introduced in \cref{r:afterRW}.(b) above,} for $q\geq 0$ we write $\mathcal{H}_{q,N}$ to indicate the (real) vector space generated by degree-$q$ spherical harmonics on the sphere $S^{N-1}$, that is: $\mathcal{H}_{0,N} = \R$ and, for $q\geq 1$, $ h \in \mathcal{H}_{q,N}$ if and only if $h$ is the restriction to $S^{N-1}$ of a homogeneous harmonic polynomial on $\R^N$. {Note that the space $\m H_{q,d+1}$ is generated by the set $Y_{\ell,\bullet}=\kop Y_{\ell,m}:m=1,\dots,N_\ell\pok$ of standard spherical harmonics, as defined in the already mentioned \cref{r:afterRW}.(b).}
\end{definition}
\begin{remark}
We recall that a homogeneous polynomial $p$ of degree $q\geq 1$ on $\mathbb{R}^N$ is harmonic (that is, $\Delta p=0$) if and only if
$$
p(x_1,...,x_N) = \sum_{(i_1,...,i_q)\in [N]^q} K(i_1,...,i_q) \, x_{i_1}\cdots x_{i_q},
$$
for some traceless $K\in \mathbb{T}_0(q,N)$. More precisely, a symmetric tensor can be uniquely identified with its induced homogeneous polynomial (i.e. the evaluation of the tensor in the direction of a single repeated vector). By this identification, the trace of a tensor is, up to an explicit constant, the Laplacian of this polynomial. A traceless $q$-symmetric tensor is then an harmonic $q$-homogeneous polynomial.
\end{remark}

\begin{definition}[\bf $0$-homogeneity]\label{d:0hom}
A mapping $E : \R^N\to \R$ is said to be (positively) $0$-{\it homogeneous} if $E(x) = E(t x)$, for every $t>0$, so that, in particular,
$$
E(\gamma) = E(U), \quad \mbox{a.s.-}\P.
$$
\end{definition}

\medskip

Now let $E : \R^N\to \R$ be a $0$-homogeneous mapping such that $E(\gamma)$ is square-integrable. Using the notation \eqref{e:wienerexp}, one has that
\begin{equation}\label{e:2projections}
E(\gamma) = E(U) = \sum_{q=0}^{\infty} E(\gamma)[q] = \sum_{q = 0}^\infty {\rm proj} \{ E \, |\, \mathcal{H}_{q,N} \}(U),
\end{equation}
where both series converge in $L^2(\mathbb{P})$, and 
$$
{\rm proj} \{ E \, |\, \mathcal{H}_{q,N} \} : S^{N-1}\to \R : v\mapsto {\rm proj} \{ E \, |\, \mathcal{H}_{q,N} \}(v)
$$
indicates the projection of the mapping $v\mapsto E(v) \in L^2(S^{N-1})$ onto $\mathcal{H}_{q,N}$. The next statement (which, to the best of our knowledge, is new) completely clarifies the connection between the summands appearing in the two infinite sums displayed in \eqref{e:2projections}.

\begin{theorem}\label{t:nonmale} Let $E : \R^N\to \R$ be a $0$-{homogeneous mapping} such that $E(\gamma)$ is square-integrable. For every $q\geq 1$, let $K_q \in \mathbb{T}(q,N)$ be such that
\begin{equation}\label{e:zorro}
E(\gamma)[q] = \sum_{(i_1,...,i_q)\in [N]^q} K_q(i_1,...,i_q) \, :\gamma^{i_1}\cdots\gamma^{i_q}:.
\end{equation}
Then, for every $q\geq 1$ one has that 
\begin{equation}\label{e:notnaiveatall}
{\rm proj} \{ E \, |\, \mathcal{H}_{q,N} \}(U) = c_{q,N} \, \sum_{(i_1,...,i_q)\in [N]^q} K^{\rm (TL)}_q(i_1,...,i_q) \, U^{i_1}\cdots U^{i_q},
\end{equation}
where we have adopted the notation \eqref{e:tracelessD}, and
$$
c_{q,N} := \frac{\mathbb{E}[\|\gamma\|^{2q}]}{\mathbb{E}[\|\gamma\|^{q}]} =\frac{2^{q/2} \Gamma(\frac12(N+2q))}{\Gamma(\frac12(N+q))}.
$$
Moreover, one has always that ${\rm Tr}\, K_2 = 0$, yielding  $K_2 = K_2^{\rm (TL)}$ and 
\begin{equation}\label{e:maguardachebello}
{\rm proj} \{ E \, |\, \mathcal{H}_{2,N} \}(U) = (N+2)\, U^T K_2 U.
\end{equation}
\end{theorem}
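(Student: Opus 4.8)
The plan is to pin down the degree-$q$ spherical harmonic ${\rm proj}\{E\,|\,\mathcal{H}_{q,N}\}$ by testing it, in $L^2(S^{N-1})$, against an arbitrary harmonic polynomial, and to transfer each such spherical pairing into a pairing in $L^2(\P)$ by means of the elementary decomposition $\gamma\stackrel{d}{=}\|\gamma\|\,U$ with $\|\gamma\|$ and $U=\gamma/\|\gamma\|$ \emph{independent}. Fix $q\ge 1$. By Remark~\ref{r:doh}(b), the chaos component $E(\gamma)[q]\in C_q$ is represented as in \eqref{e:zorro} by a unique symmetric tensor $K_q\in\mathbb{T}(q,N)$. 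By the Remark following Definition~\ref{def:sph_harm_HqN}, every element of $\mathcal{H}_{q,N}$ is the restriction to $S^{N-1}$ of a unique harmonic homogeneous polynomial
\[
h_B(x):=\sum_{(i_1,\dots,i_q)\in[N]^q} B(i_1,\dots,i_q)\,x_{i_1}\cdots x_{i_q},\qquad B\in\mathbb{T}_0(q,N),
\]
and, $h_B$ being harmonic, the random variable $h_B(\gamma)$ lies in $C_q$ with associated tensor exactly $B$ (indeed, in the multivariate Hermite expansion of $h_B(\gamma)$ into Wick products the lower-order terms carry traces of $B$, and hence vanish). Since ${\rm proj}\{E\,|\,\mathcal{H}_{q,N}\}\in\mathcal{H}_{q,N}$, I would write it as $h_{A_q}|_{S^{N-1}}$ for a unique $A_q\in\mathbb{T}_0(q,N)$; the assertion \eqref{e:notnaiveatall} is then exactly the claim $A_q=c_{q,N}\,K_q^{\rm (TL)}$.

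Then I would compare the two sides of the defining identity of the orthogonal projection,
\[
\langle {\rm proj}\{E\,|\,\mathcal{H}_{q,N}\},\,h_B\rangle_{L^2(S^{N-1})}=\langle E,\,h_B\rangle_{L^2(S^{N-1})},
\]
which holds for every traceless $B$. Using $E(\gamma)=E(U)$, the homogeneity relation $h_B(\gamma)=\|\gamma\|^{q}h_B(U)$, the independence of $\|\gamma\|$ and $U$, and then chaos orthogonality together with the isometry \eqref{e:innertenso} (so that $\langle K_q,B\rangle=\langle K_q^{\rm (TL)},B\rangle$, $B$ being traceless), the right-hand side becomes
\[
\langle E,\,h_B\rangle_{L^2(S^{N-1})}=s_{N-1}\,\E\bigl[E(U)h_B(U)\bigr]=\frac{s_{N-1}}{\E[\|\gamma\|^{q}]}\,\E\bigl[E(\gamma)h_B(\gamma)\bigr]=\frac{s_{N-1}\,q!}{\E[\|\gamma\|^{q}]}\,\langle K_q^{\rm (TL)},B\rangle .
\]
For the left-hand side I would invoke the auxiliary identity: for all $A,B\in\mathbb{T}_0(q,N)$,
\[
\fint_{S^{N-1}}h_A(v)\,h_B(v)\,dv=\frac{q!}{\E[\|\gamma\|^{2q}]}\,\langle A,B\rangle,
\]
which, by polarization, reduces to the case $A=B$: there $\E[h_A(\gamma)^2]=q!\,\langle A,A\rangle$ by \eqref{e:innertenso}, while independence of $\|\gamma\|$ and $U$ gives $\E[h_A(\gamma)^2]=\E[\|\gamma\|^{2q}]\fint_{S^{N-1}}h_A(v)^2\,dv$, and equating the two yields the identity. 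Applying it with $A=A_q$, the left-hand side is $\tfrac{s_{N-1}q!}{\E[\|\gamma\|^{2q}]}\langle A_q,B\rangle$. Equating the two expressions and cancelling the common factor $s_{N-1}\,q!$, one is left with $\langle A_q,B\rangle/\E[\|\gamma\|^{2q}]=\langle K_q^{\rm (TL)},B\rangle/\E[\|\gamma\|^{q}]$ for every traceless $B$; since $A_q$ and $K_q^{\rm (TL)}$ are themselves traceless, this forces $A_q=\tfrac{\E[\|\gamma\|^{2q}]}{\E[\|\gamma\|^{q}]}K_q^{\rm (TL)}=c_{q,N}\,K_q^{\rm (TL)}$, i.e.\ \eqref{e:notnaiveatall}. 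The closed form of $c_{q,N}$ then follows from the moments of $\|\gamma\|^2$, which is chi-squared with $N$ degrees of freedom, so that $\E[\|\gamma\|^{s}]=2^{s/2}\Gamma(\tfrac{N+s}{2})/\Gamma(\tfrac{N}{2})$.

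Finally, for $q=2$: the variable $\|\gamma\|^{2}-N=\sum_{i=1}^{N}\bigl((\gamma^{i})^{2}-1\bigr)$ lies in $C_2$ with associated tensor the identity $\mathbb{I}_N$, so by chaos orthogonality and \eqref{e:innertenso} one has $\E[E(\gamma)(\|\gamma\|^{2}-N)]=\E[E(\gamma)[2]\cdot(\|\gamma\|^{2}-N)]=2\,\langle K_2,\mathbb{I}_N\rangle=2\,{\rm Tr}\,K_2$; on the other hand $E(\gamma)=E(U)$ is independent of $\|\gamma\|^{2}$, so the same expectation factors as $\E[E(U)]\cdot\E[\|\gamma\|^{2}-N]=0$. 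Hence ${\rm Tr}\,K_2=0$, i.e.\ $K_2=K_2^{\rm (TL)}$, and \eqref{e:notnaiveatall} reads ${\rm proj}\{E\,|\,\mathcal{H}_{2,N}\}(U)=c_{2,N}\sum_{i,j}K_2(i,j)\,U^{i}U^{j}=c_{2,N}\,U^{T}K_2U$, with $c_{2,N}=2\,\Gamma(\tfrac{N+4}{2})/\Gamma(\tfrac{N+2}{2})=N+2$. The crux of the argument — everything else being bookkeeping with the chaos--tensor isometry and the decomposition $\gamma\stackrel{d}{=}\|\gamma\|\,U$ with independent factors — is the proportionality $\fint_{S^{N-1}}h_Ah_B\,dv\propto\langle A,B\rangle$ on traceless tensors; I would deliberately obtain it from \eqref{e:innertenso} and polarization, thereby sidestepping the representation-theoretic route (Schur's lemma for the irreducible $O(N)$-module $\mathcal{H}_{q,N}$).
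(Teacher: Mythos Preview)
Your proposal is correct and follows essentially the same route as the paper: both arguments test against an arbitrary traceless tensor $B\in\mathbb{T}_0(q,N)$, use the decomposition $\gamma\stackrel{d}{=}\|\gamma\|\,U$ with independent factors, and hinge on the fact that $h_B(\gamma)\in C_q$ with associated tensor $B$ (the paper isolates this as a separate combinatorial lemma, Lemma~\ref{l:wicknowick}, whereas you dispatch it in a parenthetical). Your argument for ${\rm Tr}\,K_2=0$ via independence of $E(U)$ and $\|\gamma\|^2$ is a slightly more direct variant of the paper's averaging trick (Lemma~\ref{l:magictrick}), but the content is the same.
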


\medskip 

\begin{remark}Let $E(\cdot)$ be as in Theorem \ref{t:nonmale}. Elementary computations yield that, for all $q\geq 1$,
$$
{\bf Var}({\rm proj} \{ E \, |\, \mathcal{H}_{q,N} \}(U)) = q! \, \hat{c}_{q,N} \|K_q^{\rm (TL)}\|^2,
$$
where 
$$
\hat{c}_{q,N} = \frac{\Gamma(\frac12 N) \Gamma(\frac12 (N+2q))}{\Gamma(\frac12 (N+q))^2}.
$$
It follows that
$$
{\bf Var}(E(\gamma)[q]) = q! \|K_q^{\rm (TL)}\|^2 + q!\|K_q^{\rm (Tr)}\|^2 = \frac{1}{\hat{c}_{q,N}}{\bf Var}({\rm proj} \{ E \, |\, \mathcal{H}_{q,N} \}(U))+ q!\|K_q^{\rm (Tr)}\|^2. 
$$
In particular, the last relation yields the bound
$$
{\bf Var}(E(\gamma)[q])\geq \frac{1}{\hat{c}_{q,N}}{\bf Var}({\rm proj} \{ E \, |\, \mathcal{H}_{q,N} \}(U)),
$$
as well as the implication
\begin{equation}\label{e:implica}
E(\gamma)[q] = 0 \quad \Longrightarrow \quad {\rm proj} \{ E \, |\, \mathcal{H}_{q,N} \}(U)=0.
\end{equation}
Except for the cases $q=1,2$, relation \eqref{e:implica} cannot be reversed.\footnote{To see this, let $\gamma = (\gamma^1, \gamma^2)$ be a standard Gaussian vector, and define the $0$-homogeneous random variable
$$
X =\frac{\gamma^1}{\|\gamma\|} + \frac{(\gamma^1)^2 - (\gamma^2)^2}{\|\gamma\|^2} = U^1+ (U^1)^2 - (U^2)^2. 
$$
Then, $X = h(U^1,U^2)$, where $h\in \mathcal{H}_{1,2}\oplus \mathcal{H}_{2,2}$ (that is, $h$ is the sum of a spherical harmonic of degree 1 and of a spherical harmonic of degree 2), but $X[q] \neq 0$ for every $q\geq 3$. One can check that $X[2] = \frac14\{(\gamma^1)^2 - (\gamma^2)^2\}$, consistently with \eqref{e:maguardachebello} for $N=2$.}
\end{remark}

\medskip

The proof of Theorem \ref{t:nonmale} --- which is based on two results of independent interest --- is provided in Section \ref{A:SHproofs}.  In the next section, we will use chaotic expansions to precisely capture the phenomena that are the main focus of our work.}

\subsection{Cancellations}\label{ss:cancellations}

\begin{definition}[\bfseries Cancellation phenomena]\label{d:berrycancellation}
Let the above notation and conventions prevail (with $\gamma = \{\gamma^j : j\in J\}$ as in Definition \ref{d:wiener}) and let  
\begin{equation}\label{e:thegfield}
G \, \randin \, \mathcal{C}^\infty(M)
\end{equation}
be a random field. Let $\mathbb{U}$ be an arbitrary parameter space, and consider a collection $\Phi := \{\varphi_u : u\in \mathbb{U}\}$ of real-valued functionals
\[
\varphi_u : \mathcal{C}^\infty(M) \to \R, \quad u\in \mathbb{U},
\]
such that $\varphi_u(G) \in L^2(\gamma)$ for all $u\in \mathbb{U}$.

\begin{itemize}
\item[(i)] 
Given an integer $r\geq 1$ and $\mathbb{U}_0\subseteq \mathbb{U}$, we say that $G$ \emph{induces a sharp $r$th-order (chaotic) cancellation on} $(\Phi, \mathbb{U}_0)$ if
\begin{equation}\label{e:kcancellation}
{\varphi_u(G)[r] = 0, \quad \text{a.s.-}\P \iff u\in \mathbb{U}_0,}
\end{equation}
where we have used the notation in \eqref{e:projection}.  When $\mathbb{U}_0 = \mathbb{U}$, we will say that $G$ \emph{induces a $r$th-order maximal cancellation on} $(\Phi, \mathbb{U})$. 
\item[(ii)] 
For $i=0,1$, let $
G_i \, \randin \,  \mathcal{C}^\infty(M) $ be random fields such that $\varphi_u(G_i)  \in L^2(\gamma)$ for all $u\in \mathbb{U}$.  We say that $G_1$ \emph{induces a stronger $r$th-order cancellation} on $\Phi$ than $G_0$ if, for $i=0,1$, the field $G_i$ induces a sharp $r$th-order cancellation on $(\Phi, \mathbb{U}_i)$, where the subsets $\mathbb{U}_0,\mathbb{U}_1\subseteq \mathbb{U}$ are such that $\mathbb{U}_0\subsetneq \mathbb{U}_1$.

\item[(iii)] 
 Let $\Delta$ denote the Laplace–Beltrami operator associated with the underlying compact Riemannian manifold $(M,g)$. We say that $\Phi$ {\it yields a cancellation of Berry's type}\footnote{This nomenclature is a homage to the seminal work~\cite{Ber02}, where second-order cancellation phenomena were first identified for level-set functionals of planar random waves.} if the following implication holds: if $G$ is a Gaussian (resp. uniform) random wave in the sense of Definition \ref{d:whatisarandomwave} and $G$ verifies \eqref{e:kcancellation} for some $\mathbb{U}_0\subseteq \mathbb{U}$, then necessarily $G$ is a purely monochromatic Gaussian (resp. uniform) random wave.
\end{itemize}
\end{definition}

\begin{example}\label{e:stecconitodinoarethebest} Let $(M,g)$ be a compact $d$-dimensional Riemannian manifold on which random waves (according to Definition \ref{d:whatisarandomwave}) are {\it homothetic} in the sense of \cite[Definition 1.11]{cgv2025StecconiTodino}; recall that this framework includes spheres, tori, as well as any manifold that admits an isometric action whose differential is transitive on the tangent bundle. Then, \cite[Corollary 2.1]{cgv2025StecconiTodino} (see also \cite[Corollary 3.10]{cgv2025StecconiTodino}) establishes that \emph{Gaussian random waves on $(M,g)$ induce a sharp 1st and 2nd-order cancellation of Berry's type on $(\Phi,\{0\})$}, where $\Phi := \{ \f_u : u\in \R\}$ is the collection of all level volume functionals $\f_u(G)=\vol{d-1}(G-u)$. Such a result establishes the sharpness of the classical cancellation phenomena on spheres (see e.g. \cite{Wig09, MaPeRoWi2015, MRW20}) or tori (see e.g. \cite{Cam19, Kri13, MaPeRoWi2015, Not21, DNPR_EJP}), and mirrors analogous cancellations in Euclidean waves --- see, for instance, \cite{Ber02, NourdinPeccatiRossi2019, Dalmao2021,gass2025spectralcriteriaasymptoticslocal, wigman_survey_2024} and the references therein. A new maximal 2nd order cancellation of Berry's type for uniform random waves --- applying to a large class $\Phi$ of geometric functionals admitting an integral representation --- will be established in forthcoming Theorem \ref{thm:louis2}.
\end{example}

The upcoming sections detail how our results uncover several general mechanisms of sharp cancellation characteristic of uniform random fields.

\subsection{Main results, I: local and nonlocal functionals of uniform random spherical harmonics}\label{subsec:mainI}

Our first main result implies that \emph{uniform random spherical harmonics} (as defined in Remark \ref{r:afterRW}-(b)) exhibit a form of \emph{{ 1st and 2nd order} maximal sharp cancellation} on a large class of level-set functionals --- in decisive contrast with the cancellations observed in Gaussian random waves (see Example \ref{e:stecconitodinoarethebest}), which occur only at level zero. The general mechanism underlying this phenomenon is described in Theorem~\ref{thm:Main1}, while its explicit application to uniform random spherical harmonics is presented in Corollaries~\ref{c:splendid0} and~\ref{c:splendid1} below.

\vspace{0.3em}

The crux of our argument relies on the fact that spherical harmonics of any fixed degree on $S^d$ provide an irreducible representation of the compact rotation group~$SO(d+1)$. This property, combined with the natural homothety (that is, invariance with respect to any scaling factor) associated with geometric functionals of uniform fields, is sufficient to yield the maximal cancellation described above.

 \begin{mainthm}\label{thm:Main1} 

{\it Let $G$ be a compact group, fix $N\geq 2$, and let $$L\colon G\to SO(N): R\mapsto L(R)=:R^*$$ be a real representation of $G$ having the form 
\begin{equation}\label{e:reduci}
L(R) = \bigoplus_{\ell=1}^M L_\ell(R),
\end{equation}
where $1\leq M\leq N$, each $L_\ell : G\rightarrow SO(N_\ell)$ is irreducible and, for $\ell\neq k$, $L_\ell$ and $L_k$ are not equivalent (see {\rm \cite[p. 96]{FarautBook}} for definitions, and remark the constraint $N = \sum_{\ell=1}^M N_\ell$). Let $\gamma \sim \mathcal{N}(0,\mathbbm{1}_N)$, let $E\colon \R^N\to \R$ be a $0$-homogeneous mapping (in the sense of Definition \ref{d:0hom}) such that
\be \label{e:guapo}
    E(R^*\gamma)=E(\gamma),
    \ee 
for all $R\in G$. Assume that $E(\gamma)$ is square-integrable; 
{then, writing $\gamma =(\Gamma_1,\dots,\Gamma_M)$, with
$$
\Gamma_\ell := (\gamma_{ N_1+\cdots+N_{\ell-1} +1}, ..., \gamma_{ N_1+\cdots+N_{\ell} } ), \quad \ell=1,..., M,
$$
there exist $k_1,\dots,k_M\in \R$ such that
\be 
E(\gamma)[2]=\sum_{\ell=1}^M k_\ell \|\Gamma_\ell\|^2,\quad \text{and}\quad \sum_{\ell=1}^Mk_\ell N_\ell=0.
\ee
}}

\end{mainthm}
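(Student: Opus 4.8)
The plan is to realize $E(\gamma)[2]$ as a symmetric $N\times N$ matrix, use the invariance \eqref{e:guapo} to show this matrix intertwines the representation $L$ with itself, and then extract its block‑scalar structure from Schur's lemma together with the spectral theorem; the vanishing of the trace will come for free from $0$‑homogeneity.

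The first step is to turn \eqref{e:guapo} into a commutation relation. Since $E(\gamma)\in L^2(\gamma)$, its second chaos component $E(\gamma)[2]$ lies in $C_2$ and, by \cref{r:doh}(b)--(c), is uniquely represented as $E(\gamma)[2]=\gamma^\top K\gamma-\operatorname{Tr}K=\sum_{i,j}K(i,j):\gamma^i\gamma^j:$ for a symmetric $K\in\mathbb T(2,N)$. Each $R\in G$ acts on $L^2(\gamma)$ by the unitary $F(\gamma)\mapsto F(R^*\gamma)$, with $R^*=L(R)\in SO(N)$; this action sends $C_1$ onto itself, hence every $C_q$ onto itself, and in particular commutes with the orthogonal projection onto $C_2$. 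Relation \eqref{e:guapo} says that $E(\gamma)$ is fixed by this action, so $E(\gamma)[2]$ is fixed too; but applying the unitary to $E(\gamma)[2]$ replaces $K$ by $(R^*)^\top K R^*$ (use $:(R^*\gamma)^i(R^*\gamma)^j:=\sum_{k,l}R^*_{ik}R^*_{jl}:\gamma^k\gamma^l:$ and the uniqueness of $K$), whence $(R^*)^\top K R^*=K$, i.e.\ $K$ commutes with $L(R)$ for every $R\in G$.

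The second step is representation theory. By \eqref{e:reduci}, with $V_\ell:=\R^{N_\ell}$, the splitting $\R^N=V_1\oplus\cdots\oplus V_M$ is $G$‑invariant and $L=\bigoplus_\ell L_\ell$. A $G$‑equivariant endomorphism of $\R^N$ restricts to morphisms $L_\ell\to L_k$ between the summands, and since $L_\ell\not\cong L_k$ for $\ell\neq k$, Schur's lemma forces all off‑diagonal blocks to vanish: $K=\bigoplus_\ell K_\ell$, each $K_\ell\colon V_\ell\to V_\ell$ symmetric and commuting with the irreducible $L_\ell$. Now a real symmetric matrix is orthogonally diagonalizable with real eigenvalues, and every eigenspace of $K_\ell$---being $\ker(K_\ell-\lambda\,\mathbbm 1_{N_\ell})$, the kernel of an $L_\ell$‑equivariant operator---is $L_\ell$‑invariant; irreducibility leaves only $\{0\}$ and $V_\ell$, so $K_\ell$ has a single eigenvalue $k_\ell\in\R$ and $K_\ell=k_\ell\,\mathbbm 1_{N_\ell}$. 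Substituting back, $E(\gamma)[2]=\gamma^\top K\gamma-\operatorname{Tr}K=\sum_\ell k_\ell\|\Gamma_\ell\|^2-\sum_\ell k_\ell N_\ell$.

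The final step removes the constant. The matrix $K$ above is exactly the tensor $K_2$ attached to $E(\gamma)[2]$ in \eqref{e:zorro}, and since $E$ is $0$‑homogeneous, the last assertion of \cref{t:nonmale} gives $\operatorname{Tr}K=0$; this is precisely the identity $\sum_\ell k_\ell N_\ell=0$, and it makes the constant term disappear, leaving $E(\gamma)[2]=\sum_\ell k_\ell\|\Gamma_\ell\|^2$. I expect the one genuinely non‑formal point to be the passage ``$K_\ell$ commutes with the irreducible $L_\ell$'' $\Rightarrow$ ``$K_\ell$ is scalar'': over $\R$ the commutant of an irreducible representation may be strictly larger than $\R$ (isomorphic to $\C$ or $\mathbb H$), so this step genuinely relies on the symmetry of $K$---inherited from the structure of the second Wiener chaos---via the spectral theorem, while the inequivalence hypothesis on the $L_\ell$ is what kills the off‑diagonal blocks. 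The remaining ingredients (the chaos/tensor dictionary, covariance of Wick products under orthogonal changes of variables, and tracelessness under $0$‑homogeneity) are standard or already established in the excerpt.
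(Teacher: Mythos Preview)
Your proof is correct and follows essentially the same route as the paper: both arguments identify the second-chaos tensor $K_2$, establish that it commutes with $L$ (the paper via the invariance of $P_2(v)=\E[E(\gamma)H_2(\langle\gamma,v\rangle)]$, you via the unitary action $F(\gamma)\mapsto F(R^*\gamma)$ on $L^2(\gamma)$ --- these are equivalent), apply Schur's lemma together with the symmetry of $K_2$ to obtain the block-scalar form, and invoke \cref{t:nonmale} for the trace condition. Your explicit discussion of why symmetry (via the spectral theorem) is needed to pass from ``commutes with an irreducible real representation'' to ``scalar'' is a nice point that the paper leaves implicit in its citation to Faraut.
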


The conclusion of the above theorem implies in particular that, if we write $K_2$ to denote the symmetric $N\times N$ matrix defined in \eqref{e:zorro} for $q=2$, then $K_2$ is a diagonal matrix such that ${\rm Tr}\, K_2 = 0$ and whose diagonal has at most $M$ distinct entries.
In the rest of the paper, we will make use of the following corollary of \cref{thm:Main1} above, obtained by specializing the statement to the case $M=1$. 
\begin{corollary}\label{c:ofmain1}
Let $G$ be a compact group, fix $N\geq 2$, and let $L\colon G\to SO(N): R\mapsto L(R)=:R^*$ be an irreducible real representation of $G$. Let $\gamma \sim \mathcal{N}(0,\mathbbm{1}_N)$ and let $E\colon \R^N\to \R$ be a mapping such that $E(\gamma)$ is square-integrable and
 \be \label{e:guapino}
     E(R^*\gamma)=E(\gamma) \quad \text{and} \quad E(t\gamma)=E(\gamma),
\ee 
for all $R\in G$ and all $t\in\mathbb R$. Then, $E(\gamma)[1] = E(\gamma)[2] = 0$.
\end{corollary}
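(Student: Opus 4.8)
The plan is to obtain the second-chaos vanishing directly from Theorem~\ref{thm:Main1} and the first-chaos vanishing from an elementary invariance argument. Write $R^{\ast}:=L(R)$; since $R^{\ast}\in SO(N)$ we have $R^{\ast}\gamma\stackrel{d}{=}\gamma$, and the operator $\Theta_R\colon L^2(\gamma)\to L^2(\gamma)$, $\Theta_R F(\gamma):=F(R^{\ast}\gamma)$, is unitary and maps each Wiener chaos $C_q$ onto itself (orthogonal transformations preserve the Gaussian law and the degree filtration). Hence orthogonal projections onto the $C_q$ commute with every $\Theta_R$, and since $\Theta_R E(\gamma)=E(\gamma)$ for all $R\in G$ by hypothesis, each chaotic component satisfies $\Theta_R\bigl(E(\gamma)[q]\bigr)=E(\gamma)[q]$ for all $q\geq 0$ and all $R\in G$.

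For $q=1$, I would write $E(\gamma)[1]=\langle a,\gamma\rangle$ with $a:=\E[E(\gamma)\gamma]\in\R^N$ (the coordinates of $E(\gamma)[1]$ in the orthonormal basis $\{\gamma^i\}$ of $C_1$). Applying $\Theta_R$-invariance gives $\langle (R^{\ast})^{\top}a,\gamma\rangle=\langle a,\gamma\rangle$ in $C_1$, i.e.\ $(R^{\ast})^{\top}a=a$ for every $R\in G$; since $R^{\ast}$ is orthogonal and $G$ is a group, this is the same as $S^{\ast}a=a$ for all $S\in G$. Thus $a$ lies in the space $V^G:=\{v\in\R^N\colon S^{\ast}v=v\ \forall S\in G\}$ of $L$-invariant vectors, which is a $G$-invariant subspace. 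Since $L$ is irreducible, $V^G$ is either $\{0\}$ or all of $\R^N$; the latter would force $R^{\ast}=\Id$ for all $R$, making $L$ the trivial representation on $\R^N$, which is reducible for $N\geq 2$ (split $\R^N$ into lines) --- a contradiction. Hence $V^G=\{0\}$, so $a=0$ and $E(\gamma)[1]=0$. Note that this step uses only the $G$-invariance of $E$, not its homogeneity.

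For $q=2$, the hypothesis $E(t\gamma)=E(\gamma)$ is exactly the $0$-homogeneity (in the sense of Definition~\ref{d:0hom}) required by Theorem~\ref{thm:Main1}, and $L$ itself has the form~\eqref{e:reduci} with $M=1$ (a single irreducible summand), $N_1=N$, $\Gamma_1=\gamma$. The theorem then produces $k_1\in\R$ with $E(\gamma)[2]=k_1\|\gamma\|^2$, understood in the centered sense as $k_1\sum_{i=1}^N {:}(\gamma^i)^2{:}$, subject to the constraint $k_1 N_1=k_1 N=0$; since $N\geq 2$ this forces $k_1=0$, hence $E(\gamma)[2]=0$. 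Modulo Theorem~\ref{thm:Main1}, there is no real obstacle here: the only points requiring (routine) care are the commutation of the chaos projections with the unitaries $\Theta_R$ and the standard fact that an irreducible real representation of dimension at least $2$ admits no nonzero fixed vector --- which is precisely what rules out a surviving first-chaos term.
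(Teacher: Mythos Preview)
Your proof is correct and follows essentially the same route as the paper: the second-chaos vanishing is obtained by specializing Theorem~\ref{thm:Main1} to the case $M=1$ (so that $k_1N=0$ forces $k_1=0$), and the first-chaos vanishing comes from the fixed-vector argument under irreducibility, which is exactly the $P_1$ computation carried out inside the paper's proof of Theorem~\ref{thm:Main1}. One minor remark: the hypothesis $E(t\gamma)=E(\gamma)$ for all $t\in\R$ is slightly stronger than the positive $0$-homogeneity of Definition~\ref{d:0hom} (it includes $t=-1$), so ``exactly'' is a small overstatement --- but of course the stronger assumption still allows Theorem~\ref{thm:Main1} to be invoked, and in fact the sign symmetry alone would already kill $E(\gamma)[1]$.
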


The next statement identifies two fundamental consequences of Theorem \ref{thm:Main1}, in the case of random spherical harmonics. {Indeed, it is well known that the canonical action of the orthogonal group $O(N)$ on the space $\m H_{q,N}$ of all spherical harmonics of a given degree $q\in \N$, induces an irreducible representation, to which we can apply \cref{c:ofmain1}, see \cref{sec:main1} for details.}
{We use the notation $\{\mathcal{L}_a : a=0,1,...,d\}$ and $\{b_a : a=0,1,...,d\}$ to denote, respectively, Lipschitz-Killing curvatures (defined as in \cite[Definition 10.7.2]{AdlerTaylor}) and Betti numbers (meant in the usual sense of algebraic topology, for which we refer to \cite[Chapter 4]{spanier1995algebraic}), see also \cref{ex:typeW}. }

\begin{corollary}\label{c:splendid0} For $d\geq 2$ let $\tilde{f} = \{\tilde{f}(x) : x\in S^d\}$ be the model of uniform random spherical harmonics of degree $\ell\geq 1$, as defined in Remark {\rm \ref{r:afterRW}-(b)} (see also \eqref{e:fcontilda}). Let $j^k\lebf \colon S^d\to J^k(S^{d})$ be the jet prolongation of $\lebf$ and let $\Sigma\subset J^k(S^{d},\R)$ be isotropic and semialgebraic (see \cref{def:isosing} and \cref{def:semialgsing}); then, $\mathcal{L}_a\tyu j^k\lebf \in \Sigma \uyt, b_a\tyu j^k\lebf \in \Sigma \uyt$ are square integrable and
\be 
\mathcal{L}_a\tyu j^k\lebf \in \Sigma \uyt[q] = b_a\tyu j^k\lebf \in \Sigma \uyt[q]=0, \quad q=1,2,
        \ee
        for all $a\in\{0,\dots, d\}$.
\end{corollary}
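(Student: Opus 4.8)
The plan is to realize each functional $\mathcal{L}_a(j^k\lebf\in\Sigma)$ and $b_a(j^k\lebf\in\Sigma)$ as $E(\gamma)$ for a suitable $0$-homogeneous, rotation-invariant map $E\colon\R^{N}\to\R$ acting on the Gaussian coefficient vector, and then to invoke \cref{c:ofmain1}. Concretely: write $N:=N_\ell=\dim\m H_{\ell,d+1}$, let $Y=Y_{\ell,\bullet}$ be the chosen orthonormal basis of real degree-$\ell$ spherical harmonics on $S^d$, and recall from \eqref{e:fcontilda} that $\lebf=\lebf_\gamma:=\langle\gamma/\|\gamma\|,\,Y(\cdot)\rangle$ with $\gamma\sim\m N(0,\mathbbm{1}_N)$. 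For a fixed $a\in\{0,\dots,d\}$ and a fixed isotropic semialgebraic $\Sigma$, set $\varphi:=\mathcal{L}_a(\,\cdot\in\Sigma)$ (resp. $\varphi:=b_a(\,\cdot\in\Sigma)$) and define $E(v):=\varphi(j^k\lebf_v)$ for $v\neq 0$. Since $\lebf_v$ depends on $v$ only through $v/\|v\|$, one immediately gets $E(tv)=E(v)$ for every $t>0$; the sign flip caused by $t<0$ (which replaces $\lebf_v$ by $-\lebf_v$) is absorbed by the isotropy of $\Sigma$, and in any case only positive $0$-homogeneity turns out to matter, since the vanishing of the first chaos is forced by non-triviality of the irreducible representation alone, while that of the second uses, beyond Schur's lemma, only the identity $E(\gamma)=E(\gamma/\|\gamma\|)$ together with the independence of $\gamma/\|\gamma\|$ from $\|\gamma\|$.

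For the invariance hypothesis I would take $G:=SO(d+1)$ acting on $S^d$ by rotations; the induced action $h\mapsto h\circ R^{-1}$ on $\m H_{\ell,d+1}$ is, for $d\ge 2$ and $\ell\ge 1$, a non-trivial irreducible real orthogonal representation $L_\ell\colon SO(d+1)\to SO(N)$ — the classical fact recalled in \cref{sec:main1}. Writing $R^*:=L_\ell(R)$, one checks that $\lebf_{R^*\gamma}=\lebf_\gamma\circ R^{-1}$, so by naturality of the $k$-jet prolongation under diffeomorphisms $j^k\lebf_{R^*\gamma}$ is the image of $j^k\lebf_\gamma$ under the bundle isometry of $J^k(S^d,\R)$ induced by $R$; since $\Sigma$ is isotropic, this gives $\{j^k\lebf_{R^*\gamma}\in\Sigma\}=R\cdot\{j^k\lebf_\gamma\in\Sigma\}$, and since $\mathcal{L}_a$ and $b_a$ are invariant under isometries (being, respectively, an integral-geometric curvature and a topological invariant), we conclude $E(R^*\gamma)=E(\gamma)$ for all $R\in SO(d+1)$.

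The remaining, and I think hardest, point is that $E(\gamma)$ is well defined and square-integrable. The crucial observation is that $\lebf_v$ is (a scalar multiple of) the restriction to $S^d$ of a homogeneous harmonic polynomial of degree $\ell$; hence for every $v$ the set $\{x\in S^d:j^k\lebf_v(x)\in\Sigma\}$ is semialgebraic, with a defining system whose complexity (number and degrees of the polynomials involved) is bounded by a constant depending only on $d,\ell,k$ and the fixed semialgebraic description of $\Sigma$. A parametric transversality (Bulinskaya-type) argument — using that the jet-evaluation map on $S^{N-1}\times S^d$ is generically transverse to the strata of $\Sigma$, as in the framework underlying \cref{ex:typeW} — then shows that for a.e.\ realization this set is a compact Whitney-stratified semialgebraic subset of $S^d$, so that $\mathcal{L}_a$ and $b_a$ are well defined; and by Milnor--Thom-type bounds its Betti numbers, and via the Crofton formula applied to affine slices its Lipschitz--Killing curvatures, are bounded by a constant $C=C(d,\ell,k,\Sigma)$ uniformly in $v$. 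Therefore $E$ is a bounded function, and in particular $E(\gamma)\in L^2(\gamma)$.

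Putting the three steps together, $E$ satisfies all the hypotheses of \cref{c:ofmain1} with $G=SO(d+1)$ and representation $L=L_\ell$, so that $E(\gamma)[1]=E(\gamma)[2]=0$; that is, $\mathcal{L}_a(j^k\lebf\in\Sigma)[q]=b_a(j^k\lebf\in\Sigma)[q]=0$ for $q=1,2$, and since $a\in\{0,\dots,d\}$ and $\Sigma$ were arbitrary this is exactly the statement of the corollary. The main obstacle is thus the third step: combining a transversality argument (to make the functionals almost surely meaningful) with a uniform a priori bound — where the essential input is that $\lebf$, being a genuine polynomial of bounded degree, forces bounded semialgebraic complexity and hence, via Milnor--Thom and Crofton, an honest pointwise bound on both $\mathcal{L}_a$ and $b_a$, which is what turns the soft ``invariant and $0$-homogeneous'' observation into an applicable instance of \cref{c:ofmain1}.
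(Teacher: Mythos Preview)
Your proposal is correct and follows essentially the same route as the paper: verify positive $0$-homogeneity, verify $SO(d+1)$-invariance via isotropy of $\Sigma$ and isometry-invariance of $\mathcal{L}_a$ and $b_a$, establish a uniform pointwise bound (Milnor--Thom for Betti numbers, Crofton/Hadwiger for Lipschitz--Killing curvatures) exploiting that $\lebf$ is a polynomial of fixed degree and $\Sigma$ is semialgebraic, and then invoke \cref{c:ofmain1} with the irreducible representation of $SO(d+1)$ on $\m H_{\ell,d+1}$. One small simplification: the parametric transversality step you insert is unnecessary, since any semialgebraic subset of $S^d$ is automatically Whitney-stratified, so $\mathcal{L}_a$ and $b_a$ are well defined for \emph{every} $v$ without any genericity argument; the paper's \cref{lem:finiteness} proceeds this way.
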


The following corollary establishes the announced form of \emph{maximal cancellation} for geometric functionals of uniform random spherical harmonics, encompassing in particular instances of \emph{non-local functionals}---such as Betti numbers---that typically fall outside the scope of Wiener chaos-based techniques (see, however, \cite{mcauley2025limittheoremsnumbersign}). As already noted, this result should be contrasted with the content of Example~\ref{e:stecconitodinoarethebest}, showing that for Gaussian random waves on general manifolds (a setting that includes Gaussian spherical harmonics) the second-order cancellation of level-set volumes is sharp but \emph{non-maximal}, being in fact concentrated only at the nodal level.

\begin{corollary}\label{c:splendid1}
    For $d\geq 2$ let $\tilde{f} = \{\tilde{f}(x) : x\in S^d\}$ be as in Corollary \ref{c:splendid0}. Then, $\tilde{f}$ induces a maximal {1st and 2nd order} cancellation on $(\Phi, \R)$ (according to Definition \ref{d:berrycancellation}-{\rm (i)}), where 
    $$
    \Phi = \{\varphi_u : u\in \R\}
    $$
is a collection of functionals belonging to any of the following classes:
    \begin{enumerate}[$(i.)$]
        \item Excursion volumes: $ \varphi_u(\tilde{f}) = \vol{d}(\lebf\ge u)$, $u\in \R$;
        \item Level volumes: $\varphi_u(\tilde{f}) =\vol{d-1}(\lebf = u)$, $u\in \R$;
        \item Critical points: $\varphi_u(\tilde{f}) = \#\{d\lebf=0, \lebf\ge u\}$,  $u\in \R$;
        \item Lipschitz-Killing curvatures: for fixed $a=0,1,...,d$, $\varphi_u(\tilde{f}) =\mathcal{L}_a\tyu \lebf \ge u \uyt$, or  $\varphi_u(\tilde{f}) = \mathcal{L}_a\tyu \lebf = u \uyt$,  $u\in \R $;
        \item Betti numbers: for fixed $a=0,1,...,d$, $\varphi_u(\tilde{f}) =b_a\tyu \lebf \ge u \uyt$, or  $\varphi_u(\tilde{f}) = b_a\tyu \lebf = u \uyt$, $u\in \R $.
    \end{enumerate}
\end{corollary}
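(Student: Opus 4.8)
\textbf{Proof plan for Corollary \ref{c:splendid1}.}

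The plan is to reduce every item to a single application of Corollary \ref{c:ofmain1}, with $G = O(N_\ell)$ acting on $\R^{N_\ell}$ via the irreducible representation on the space $\m H_{\ell, d+1}$ of spherical harmonics of degree $\ell$ (here $N_\ell$ is as in \eqref{e:sphericalharmonics}). The key structural observation is that all the functionals in $(i.)$--$(v.)$ share two features: first, they are invariant under the rescaling $\lebf \mapsto t\lebf$ whenever $u$ is rescaled accordingly, but more importantly, at a \emph{fixed} threshold $u$ they depend on the Gaussian coefficient vector $\gamma$ only through $U = \gamma/\|\gamma\|$, because $\lebf(x) = \langle \gamma/\|\gamma\|, Y(x)\rangle$ by \eqref{e:fcontilda}; hence each $\varphi_u(\lebf)$ is a $0$-homogeneous function $E_u(\gamma)$ of $\gamma$ in the sense of Definition \ref{d:0hom}. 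Second, the model of uniform random spherical harmonics is built from an orthonormal basis $Y_{\ell,\bullet}$ of $\m H_{\ell,d+1}$, and the orthogonal group $O(N_\ell)$ acts on the coefficient vector $\gamma$ by $\gamma \mapsto R^*\gamma$ in a way that corresponds, at the level of the field, to precomposing $\lebf$ with an isometry of $S^d$ (this is the standard fact that $O(d+1)$ acts on each eigenspace $\m H_{\ell,d+1}$ irreducibly, transported to the coefficient picture via the orthonormal basis). Since all the listed functionals are \emph{isometry-invariant} — excursion volumes, level volumes, critical point counts, Lipschitz--Killing curvatures, and Betti numbers are all defined intrinsically and are unchanged under a Riemannian isometry of $S^d$ — we get $E_u(R^*\gamma) = E_u(\gamma)$ for all $R \in O(N_\ell)$. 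Combined with $0$-homogeneity, condition \eqref{e:guapino} holds, and Corollary \ref{c:ofmain1} yields $E_u(\gamma)[1] = E_u(\gamma)[2] = 0$ for every $u\in\R$, which is exactly the claimed maximal 1st and 2nd order cancellation on $(\Phi,\R)$.

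First I would record the square-integrability of each $\varphi_u(\lebf)$: for items $(i.), (ii.), (iv.)$ this follows from Corollary \ref{c:splendid0} (taking $\Sigma$ to be the appropriate isotropic semialgebraic singularity in the relevant jet space — level and excursion volumes and their Lipschitz--Killing analogues all admit such a representation via the coarea/Kac--Rice formalism referenced in Example \ref{e:stecconitodinoarethebest}); for $(iii.)$ the count of critical points above a level is a Kac--Rice integral against a semialgebraic condition on the $1$-jet, again covered by Corollary \ref{c:splendid0}; for $(v.)$ the Betti numbers $b_a$ are likewise handled by Corollary \ref{c:splendid0}, which explicitly asserts their square-integrability. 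Next I would verify the $0$-homogeneity claim carefully: since $\varphi_u$ is applied to $\lebf = f/\|f\|_{L^2}$ and $\|f\|_{L^2}$ is itself $1$-homogeneous in $\gamma$, the normalized field $\lebf$ — and hence $\varphi_u(\lebf)$ — is unchanged if $\gamma$ is replaced by $t\gamma$, $t>0$; extending to $t\in\R$ uses that all the functionals listed are even in the field at level $u$ only for $u=0$, so strictly I would phrase \eqref{e:guapino} with $t>0$, which is all Corollary \ref{c:ofmain1}'s proof (via $0$-homogeneity in the sense of Definition \ref{d:0hom}) actually requires. Then I would make precise the equivariance: fixing the orthonormal basis $Y_{\ell,\bullet}$ identifies $\m H_{\ell,d+1} \cong \R^{N_\ell}$, and for $R\in O(N_\ell)$ the map $\gamma \mapsto R^*\gamma$ produces the field $x\mapsto \langle R^*\gamma, \y(x)\rangle = \langle \gamma, (R^*)^{\mathsf T}\y(x)\rangle$; because $\m H_{\ell,d+1}$ carries the $O(d+1)$-representation, for suitable $R$ this equals $\lebf \circ \iota$ for an isometry $\iota$ of $S^d$, and conversely the representation is irreducible, so Corollary \ref{c:ofmain1} applies with this $G$. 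Finally, isometry-invariance of each functional gives $E_u(R^*\gamma) = E_u(\gamma)$ and the conclusion follows.

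The main obstacle I anticipate is \emph{not} the cancellation mechanism itself — that is a clean consequence of irreducibility and homogeneity via Corollary \ref{c:ofmain1} — but rather the bookkeeping needed to certify that each of the five classes genuinely falls under the hypotheses: namely (a) square-integrability at \emph{every} threshold $u$, including pathological levels, which for the volume-type functionals requires the isotropic-semialgebraic-singularity machinery of Corollary \ref{c:splendid0} and for Betti numbers requires the non-trivial integrability input already packaged there; and (b) the measurability and genuine isometry-invariance of $\varphi_u$ as a functional on $\mathcal{C}^\infty(S^d)$ — in particular that the Betti numbers of $\{\lebf \ge u\}$ and $\{\lebf = u\}$ are a.s.\ well-defined (which needs that $u$ is a.s.\ a regular value, or an appropriate genericity statement) and unchanged under isometries. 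I would handle (a) by citing Corollary \ref{c:splendid0} item by item with the correct choice of $\Sigma$, and (b) by invoking the standard fact that isometries of $S^d$ act on jets and preserve all the geometric and topological invariants in the list, so that the composition with an isometry leaves $\varphi_u$ fixed. Once these verifications are in place, the proof is a one-line appeal to Corollary \ref{c:ofmain1} for each item.
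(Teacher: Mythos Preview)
Your approach is correct and essentially identical to the paper's, which simply cites Proposition~\ref{prop:semiall} (showing the relevant singularities are isotropic and semialgebraic) and then invokes Corollary~\ref{c:splendid0} directly---since \ref{c:splendid0} already packages both the square-integrability and the cancellation, there is no need to split the argument into a separate appeal to \ref{c:ofmain1}.

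One notational slip to fix: you write $G = O(N_\ell)$ throughout, but the group should be $G = SO(d+1)$ (or $O(d+1)$), acting on $\R^{N_\ell}$ \emph{via} the representation $L\colon SO(d+1)\to SO(N_\ell)$ of \eqref{eq:reprSOdN}. The invariance $E_u(R^*\gamma)=E_u(\gamma)$ holds for $R\in SO(d+1)$ (where $R^*=L(R)$), not for arbitrary $R\in O(N_\ell)$; your later phrase ``for suitable $R$'' shows you understand this, but the group must be named correctly for Corollary~\ref{c:ofmain1} to apply as stated.
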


\begin{remark}{\rm In the case of critical points (see Point ({\it iii.}) of the previous statement), one can replace set $\R$ with $\R\cup \{-\infty\}$. 
}
\end{remark}

\smallskip

\begin{remark}[On irreducibility]\label{ex:contrexirred}{\rm In Theorem \ref{thm:Main1}, the assumption that $L$ is irreducible cannot be easily removed if one wants to conclude that $E(\gamma)[2] = 0$. To see this, we let the notation of Theorem \ref{thm:Main1} prevail and, for some $M\geq 2$, we consider $M$ irreducible real representations $L_\ell : G\mapsto SO(N_i)$ of $G$, where $N_\ell\geq 2$, $\ell=1,...,M$. Writing $N = \sum_{\ell=1}^M N_\ell$, one has therefore that 
$$
R\mapsto \bigoplus_{\ell=1}^M L_\ell(R) := L(R),
$$
is a reducible real representation of $G$ over $\R ^N$. Now let $\gamma \sim \mathcal{N}(0,\mathbbm{1}_N)$ as before, and consider a vector $(b_1,...,b_M)\in \R^M\backslash\{0 \}$ such that $\sum_{\ell =1}^M b_\ell N_\ell = 0$. Setting $N_0 := 0$, and defining
$$
\Gamma_\ell := (\gamma_{ N_1+\cdots+N_{\ell-1} +1}, ..., \gamma_{ N_1+\cdots+N_{\ell} } ), \quad \ell=1,..., M,
$$
one has therefore that
$$
E(\gamma) := \frac{\sum_{\ell=1}^M b_\ell \|\Gamma_\ell \|^2}{\|\gamma\|^2}
$$
is a bounded $0$-homogeneous random variable verifying \eqref{e:guapo}. However, since $X:= \sum_{\ell=1}^M b_\ell \|\Gamma_\ell \|^2$ is a non-zero element of the second Wiener chaos associated with $\gamma$ and 
$$
\mathbb{E}[XE(\gamma)[2]]= \mathbb{E}[XE(\gamma)] = \mathbb{E}[X^2/\|\gamma\|^2]>0,
$$
one has that $\E(\gamma)[2]\neq 0$, that is, the conclusion of Theorem \ref{thm:Main1} is not valid in this case.
}
\end{remark}

In the next subsection, the conclusion of Corollaries \ref{c:splendid0} and \ref{c:splendid1} is extended to local functionals on more general compact manifolds (other than $S^d$).

\subsection{Main results, II: general cancellation phenomenon for local functionals of Uniform Random Waves on compact manifolds}\label{subsec:mainres2}
In the previous subsection, we have demonstrated how \cref{thm:Main1} and its corollaries imply that, when $M=S^d$, the uniform random spherical harmonics $G=\tilde{f}$ induces a {1st and 2nd-order} maximal cancellation on a very large family of geometric functionals (see \cref{c:splendid1}), regardless of them being local (defined by some integral, e.g. volumes and Lipschitz-Killing curvatures) or non-local (e.g. Betti numbers and connected components). We observe once again that this result relies crucially on the irreducibility of the representation induced by spherical harmonics (see \cref{ex:contrexirred}). {Accordingly, any attempt to apply the results of \cref{subsec:mainI} beyond the spherical setting must be restricted to manifolds whose Laplace eigenspaces are irreducible representations. This requirement is highly restrictive and is satisfied only by a limited class of manifolds, including certain homogeneous spaces; see \cite[Theorems 3 and 4]{2019_IrreducibleEigespaces_PetreccaRoser}.} Among the examples considered in this paper, only the spheres $S^d$ fall into this category.

The next statement shows that, if we restrict the class of functionals under consideration to those that are local and of the form \eqref{eq:LocalFunctionalsOfTheValue} below, then a form of maximal cancellation occurs on an arbitrary Riemannian manifold, {the only caveat being} the limitations discussed in \cref{r:afterRW}, Point \ref{itm:HomogeneouSpaz}.
\begin{mainthm}\label{thm:Main2}
    {\it Let the setting of \cref{sec:ourmodel} prevail and let $F\colon \R\to \R$ be a measurable function such that $F(\sqrt{\frac{N}{\vol{} (M)}}\frac{\gamma^1}{\|\gamma\|})$ is square-integrable. Then,
    \be\label{eq:LocalFunctionalsOfTheValue} 
\m E(\tilde{f}):=\int_M F\tyu \tilde{f}(x)\uyt dx
    \ee
    is square-integrable and $\m E(\tilde{f})[2]=0$.}
\end{mainthm}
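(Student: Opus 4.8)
The plan is to reduce the computation of $\m E(\tilde f)[2]$ to a \emph{pointwise} statement on $M$, followed by one global moment identity. Concretely, I will show that for each $x\in M$ the projection of $F(\tilde f(x))$ onto the second chaos $C_2$ equals $\alpha\big(\langle\gamma,\y(x)\rangle^2-\|\gamma\|^2/N\big)$ for \emph{one and the same} constant $\alpha=\alpha(F,N)$ (with $\y$ as in \eqref{e:unitvector}), and then that $\int_M\langle\gamma,\y(x)\rangle^2\,dx=\tfrac{\vol{}(M)}{N}\|\gamma\|^2$ --- a consequence of the Orthonormality Assumption \eqref{eq:Yorthogonality} alone. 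These two facts combine to give $\m E(\tilde f)[2]=0$.

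First I would rewrite the integrand and settle integrability. Set $c:=\sqrt{\NN}$ --- a genuine constant, by the Constant Norm Assumption \eqref{eq:YlmNorm} --- and $U:=\gamma/\|\gamma\|$, so that $\tilde f(x)=\langle U,Y(x)\rangle=\|Y(x)\|\,\langle U,\y(x)\rangle=c\,\langle U,\y(x)\rangle$; hence $F(\tilde f(x))=E_x(\gamma)$ with $E_x(v):=F\big(c\,\langle v,\y(x)\rangle/\|v\|\big)$ a $0$-homogeneous map (see \cref{d:0hom}), and $E_x(\gamma)\stackrel{d}{=}F(c\,\gamma^1/\|\gamma\|)\in L^2(\gamma)$ by \cref{rmk:normalization}. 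Then $\E[\m E(\tilde f)^2]\le\vol{}(M)\int_M\E[E_x(\gamma)^2]\,dx=\vol{}(M)^2\,\E[F(c\gamma^1/\|\gamma\|)^2]<\infty$ (Cauchy--Schwarz and Fubini), so $\m E(\tilde f)$ is square-integrable; since moreover $x\mapsto E_x(\gamma)$ has constant $L^2(\gamma)$-norm, $\m E(\tilde f)=\int_M E_x(\gamma)\,dx$ holds as a Bochner integral in $L^2(\gamma)$, the bounded operator $\mathrm{proj}(\,\cdot\,|\,C_2)$ commutes with it, and $\m E(\tilde f)[2]=\int_M E_x(\gamma)[2]\,dx$ (equivalently: test against an arbitrary $Z\in C_2$ and swap $\E$ with $\int_M$ by Fubini).

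The core step is the identification of $E_x(\gamma)[2]$. By rotational invariance it suffices to treat the case $\y(x)=e_1$, in which case $E_x(\gamma)=F(c\,\gamma^1/\|\gamma\|)$ depends on $\gamma$ only through $(\gamma^1,\|\gamma\|)$ and is therefore invariant under the stabilizer $H_{e_1}$ of $e_1$ in $O(N)$ (acting as $O(N-1)$ on $e_1^{\perp}$). Since replacing $\gamma$ by $R\gamma$, $R\in O(N)$, preserves Gaussianity and chaos order, $E_x(\gamma)[2]$ inherits the same invariance; writing $E_x(\gamma)[2]=\gamma^{T}K\gamma-\mathrm{Tr}\,K$ for a symmetric $K$, $H_{e_1}$-invariance forces $K=\mathrm{diag}(a,b,\dots,b)$, while $0$-homogeneity forces $\mathrm{Tr}\,K=0$ (indeed $E_x(\gamma)=E_x(U)$ is independent of $\|\gamma\|$, so $\E[E_x(\gamma)(\|\gamma\|^2-N)]=\E[E_x(U)]\,\E[\|\gamma\|^2-N]=0$, and $\|\gamma\|^2-N\in C_2$ carries the tensor $\mathbbm{1}_N$, whence $\mathrm{Tr}\,K=0$ by \eqref{e:innertenso}; this is also part of \cref{t:nonmale}). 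Thus $K=\alpha\big(e_1e_1^{T}-\tfrac1N\mathbbm{1}_N\big)$ for a scalar $\alpha$, i.e. $E_x(\gamma)[2]=\alpha\big((\gamma^1)^2-\|\gamma\|^2/N\big)$; undoing the rotation gives $E_x(\gamma)[2]=\alpha\big(\langle\gamma,\y(x)\rangle^2-\|\gamma\|^2/N\big)$, with $\alpha$ depending only on $F$, $c$, $N$ --- not on $x$, precisely because $c$ is constant over $M$. (The same conclusion, with $\alpha$ made explicit, also follows from \cref{t:nonmale}: the zonal structure of $E_x$ on $S^{N-1}$ makes $\mathrm{proj}\{E_x\,|\,\mathcal{H}_{2,N}\}$ a multiple of the degree-$2$ zonal harmonic $\langle\,\cdot\,,\y(x)\rangle^2-\tfrac1N$, and \eqref{e:maguardachebello} translates this into $K$.) This identification is the delicate point of the whole proof; everything else is bookkeeping.

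Finally I would integrate. By \eqref{e:unitvector}, $\y(x)=Y(x)\sqrt{\vol{}(M)/N}$, so $\int_M\y(x)\y(x)^{T}\,dx=\tfrac{\vol{}(M)}{N}\int_M Y(x)Y(x)^{T}\,dx=\tfrac{\vol{}(M)}{N}\mathbbm{1}_N$ by the Orthonormality Assumption \eqref{eq:Yorthogonality}; therefore, for every realization of $\gamma$,
\[
\int_M\Big(\langle\gamma,\y(x)\rangle^2-\frac{\|\gamma\|^2}{N}\Big)\,dx=\gamma^{T}\Big(\int_M\y(x)\y(x)^{T}\,dx\Big)\gamma-\frac{\vol{}(M)}{N}\|\gamma\|^2=\frac{\vol{}(M)}{N}\|\gamma\|^2-\frac{\vol{}(M)}{N}\|\gamma\|^2=0 .
\]
Combining this with $\m E(\tilde f)[2]=\int_M E_x(\gamma)[2]\,dx=\alpha\int_M\big(\langle\gamma,\y(x)\rangle^2-\|\gamma\|^2/N\big)\,dx$ yields $\m E(\tilde f)[2]=0$, as claimed. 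The only genuine input of this last step is that the trace term cancels \emph{exactly} --- not merely asymptotically --- against the second moments of $\y(\cdot)$, which is exactly what \eqref{eq:Yorthogonality} provides; the exchange of $\int_M$ with the chaos projection and the integrability bound are routine. (The degenerate case $N=1$ --- outside the scope of \cref{t:nonmale} --- is trivial, since then $\tilde f(x)=\mathrm{sign}(\gamma^1)Y_1(x)$ and $\m E(\tilde f)$ is affine in $\mathrm{sign}(\gamma^1)$, an odd function of $\gamma^1$ with zero projection on $C_2$.)
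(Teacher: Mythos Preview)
Your proof is correct and follows essentially the same strategy as the paper's: identify the pointwise projection $F(\tilde f(x))[2]$ via the $O(N-1)$-invariance of $F(c\gamma^1/\|\gamma\|)$, use $0$-homogeneity (independence of $U$ and $\|\gamma\|$) to force the trace condition, and then invoke the Orthonormality Assumption to make the integral over $M$ collapse. The only cosmetic difference is that the paper parametrizes the second chaos by two scalars $c_1,c_2$ (via \cref{lem:multimagictrick}) and shows $c_1+(N-1)c_2=0$ \emph{after} integrating, whereas you build the traceless form $K=\alpha\big(\y(x)\y(x)^T-\tfrac1N\mathbbm{1}_N\big)$ upfront and let the integral vanish directly; these are two orderings of the same three ingredients.
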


{The proof of the previous theorem relies on the interplay {between the two conditions \eqref{eq:Yorthogonality} and \eqref{eq:YlmNorm}}, introduced in \cref{sec:ourmodel}.
As already pointed out (see \cref{r:afterRW}, Point \ref{itm:HomogeneouSpaz}), these assumptions restrict the class of Riemannian manifolds to which \cref{thm:Main2} applies. However, as noted at the beginning of the present section, this restriction is substantially weaker than the one required for the application of \cref{c:ofmain1}. {Indeed,} even when $M=G/K$ is a Riemannian homogeneous space (and even if it is symmetric), the eigenspaces of the Laplace operator may fail to be irreducible representations (see again \cite[Theorem 4]{2019_IrreducibleEigespaces_PetreccaRoser}). In such cases, \cref{c:ofmain1} does not apply, whereas \cref{thm:Main2} remains applicable. A concrete example is provided by the torus $M=\mathbb T^d$.
The functional $\m E_u(\lebf)=\m L_d(\lebf \ge u)$, defined as the volume of the excursion set of $\tilde f$, falls within the scope of \cref{thm:Main2}; see \cref{eq:ExcursionArea}. We will discuss this case in detail in \cref{sec:main3}.}

\medskip

The technique employed in the proof of \cref{thm:Main2} allows one to establish a 2nd-order chaotic cancellation also for more complicated classical local functionals, having the following form:
\be\label{eq:funfunfunfunfun} 
\m E(\lebf)=\int_M F\tyu \lebf(x),\|\nabla \lebf (x)\|\uyt dx,
\ee 
with $F\colon \R^2\to \R$ measurable and bounded (or satisfying an adequate integrability condition). 
Prominent examples include the level volume $\m L_{d-1} ( \lebf \ge u)=\frac12\vol{d-1} ( \lebf = u)=\int_M\delta_0(\lebf(x))\|\nabla \lebf(x)\| dx$ and the number of critical points $\m L_0(\nabla\lebf=0)=\int_M\delta_0(\|\nabla \lebf(x)\|) dx$.\footnote{In the two mentioned cases, the $F$ appearing in \cref{eq:funfunfunfunfun} is a distribution and not a function. Nevertheless, one can typically reduce to the case of functions, by standard approximation techniques, or via alternative representation such as \cite{PolyAngst}.} 
For such functionals, we prove \cref{thm:louis2}, reported in \cref{sec:main2} below, under the following additional assumption:
\be \label{eq:homothetic}
\textbf{Homotheticity Assumption: }\quad \|\de_uY(x)\|=\mathrm{const}\cdot \|u\|, \ \forall x\in M, \forall u\in T_xM.
\ee
In the context of \cref{sec:ourmodel}, such requirement is equivalent (see \cref{lem:homothe}) to say that the underlying Gaussian field $f$ should be \emph{homothetic} {(see \cite[Definition 1.11]{cgv2025StecconiTodino}, as well as \cite{elk2024PistolatoStecconi} for the first appearance of such a terminology), a concept meant to generalize the notion of \emph{isotropic field} to general manifolds.}
\begin{remark} One remarkable byproduct of \cref{thm:louis2} is that --- if $\tilde{f}$ is a uniform random wave in the sense of Definition \ref{d:whatisarandomwave} and \eqref{eq:homothetic} is satisfied --- then the second Wiener chaos projection of $\m E(\tilde f)$ in \eqref{eq:funfunfunfunfun} can always be expressed in terms of an appropriate multiple of the harmonic polynomial
\begin{equation}\label{e:thebestharmonicpolynomial}
(x_1,...,x_N)\mapsto \sum_{\ell = 1}^N(x_\ell^2-1)(\lambda^2-\lambda_{i_\ell}^2) =\sum_{\ell = 1}^Nx_\ell^2(\lambda^2-\lambda_{i_\ell}^2) ,
\end{equation}
where ${i_1},...,{i_N}$ is an enumeration of the elements of $\{i\in \N:\lambda_i\in I\}$, and $\lambda^2 = \frac{1}{N} \sum_{\lambda_i\in I} \lambda_i^2$. The polynomial \eqref{e:thebestharmonicpolynomial} plays a crucial role in reference \cite{cgv2025StecconiTodino}.
\end{remark}

\subsection{Main Results, III: a general form of the chaos expansion}\label{subsec:mainres3}

{The two preceding main results (\cref{thm:Main1} and \cref{thm:Main2}) show that, when working with uniform random waves rather than Gaussian ones, the projections of geometric functionals onto the second Wiener chaos vanish under substantially more general conditions. A natural next step is therefore to investigate the variance of these functionals and, potentially, to establish (quantitative) central limit theorems, exploiting e.g. {\it fourth moment theorems} for sequences of random variables living in a fixed chaos; see \cite[Chapter 5]{nourdinpeccatibook}. Although we do not pursue these questions in full generality in the present work, we introduce a general method for computing the exact Wiener chaos expansion of uniform random waves and study its relationship with the corresponding Gaussian framework.}
For conciseness and simplicity, here we will focus on the simplest geometric functional, i.e., the excursion area
\be \label{eq:ExcursionArea}
\m E_u( \lebf ):=\int_M\I{\kop \lebf(x) \ge
 u \pok} dx
;
\ee
an inspection of the proof reveals that the same argument can be extended to other local functionals (i.e., boundary volumes, Euler-Poincaré characteristic), to the price of heavier computations.

The next definition introduces a crucial technical object.
\begin{definition}\label{def:gammax}
Let the setting of \cref{sec:ourmodel} prevail. For every $x\in M$, we define the $N$-dimensional Gaussian vector $\gamma_x$ as
\be \label{eq:gammaperp}
\gamma_x:=\gamma-f(x) Y(x)\sqrt{\frac{\vol{}(M)}{N}}.
\ee
\end{definition}

\smallskip 
For any fixed $x\in M$, the Gaussian vector $\gamma_x$ 
is the orthogonal projection in $\R^N$ of $\gamma$ onto the hyperplane $Y(x)^\perp$, therefore it is a standard Gaussian vector of rank $N-1$ supported on such space. 
Because of this, $\gamma_x$ and $f(x)=cY(x)^T\gamma$ are independent. 
Notice that since
\be 
\|f\|_{L^2(M)}=\|\langle \gamma,Y(\cdot)\rangle\|_{L^2(M)}\sqrt{\frac{\vol{} (M)}{N}}=\|\gamma\| \sqrt{\frac{\vol{} (M)}{N}},
\ee 
we can write the uniform random field as follows:
\be \label{eq:therootofalltricks}
\tilde{f}(x)=\frac{c f(x)}{\sqrt{f(x)^2+\|\gamma_x\|^2}},\quad \text{where}\quad c:=\sqrt{\frac{N}{\vol{} (M)}}.
\ee
The advantage of such a {representation} is that, by construction, $f(x)$ and $\gamma_x$ are independent standard Gaussian random vectors, respectively, in $\R$ and in $Y(x)^\perp$.} This fact will become particularly relevant by virtue of \cref{sec:magictrick}.
 \begin{remark} 
Note that, given any point $x\in M$, the pair of random vectors $(f(x),\gamma_x)$ completely determines both $f(\cdot)$ and  $\lebf(\cdot)$ as random fields over $M$. 
In particular, $\gamma_x$ can be identified with the conditional random field $[f(\cdot)|f(x)=0]$.
\end{remark}

The process $x\mapsto \gamma_x$ introduced in \eqref{eq:gammaperp} appears naturally in chaos computations, and most of the following formulas are given in terms of it, starting from forthcoming Proposition \ref{thm:Main3}, yielding a clear way to compare the chaos expansion of the excursion indicator functions $\I\{ \lebf(x) \ge
 u \}$ (uniform case) and $\I\{ f(x) \ge
 u \}$ (Gaussian case).
To this end, let us recall that for the (unit-variance) Gaussian field $f$, we have the following Wiener chaos expansion:
\be \label{eq:macaron}
\I\{ f(x) \ge
 u \}=\sum_{q\in \N}J_q\tyu u\uyt \frac{H_{q}\tyu f(x)\uyt}{q!}
\ee
where the sequence $J_0(u)=\Phi(u)$, and $J_q(u)=(-1)^{q-1}H_{q-1}(u)\frac{e^{-\frac{u^2}{2}}}{\sqrt{2\pi}}$, for $q\ge 1$, see \cite{MaWi2011excarea}.
\begin{proposition} \label{thm:Main3} 
Let the setting of \cref{sec:ourmodel} prevail. For any {$u\in (-\sqrt{\NN},\sqrt{\NN})$}, the following expansion converges in $L^2$:
\bega \label{eq:almostDecomp}
\I{\kop \lebf(x) \ge
 u \pok}
 &=
 \sum_{q\in \N} 
 {J_q \tyu \|\gamma_x\|  \frac{u}{\sqrt{\frac{N}{\vol{}(N)} - u ^2}}  \uyt}
 \frac{H_{q}\tyu f(x)\uyt}{q!},
\eega
where the sequence $J_q(.)$ has been defined above. {Moreover, the term 
 ${J_q \tyu \|\gamma_x\|  \frac{u}{\sqrt{\frac{N}{\vol{}(M)} - u ^2}}  \uyt}$
 converges to $J_q(u\sqrt{\vol{}(M)})$ in $L^2$ as $N\to +\infty$.}
\end{proposition}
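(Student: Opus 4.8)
The plan is to start from the representation \eqref{eq:therootofalltricks}, namely $\tilde f(x) = c f(x)/\sqrt{f(x)^2 + \|\gamma_x\|^2}$, and to rewrite the excursion indicator in terms of the two \emph{independent} Gaussian objects $f(x)\in\R$ and $\gamma_x$ supported on $Y(x)^\perp$. First I would observe that, since $c = \sqrt{N/\vol{}(M)}$ and $u\in(-\sqrt{N/\vol{}(M)},\sqrt{N/\vol{}(M)})$, the event $\{\tilde f(x)\ge u\}$ is equivalent, on the complement of the null set $\{f(x)=0\}$, to a linear condition on $f(x)$ with a coefficient depending only on $\|\gamma_x\|$: explicitly, $c f(x)\ge u\sqrt{f(x)^2+\|\gamma_x\|^2}$ rearranges (squaring carefully and tracking signs, using $|u|<c$) to $f(x)\ge \|\gamma_x\|\,\frac{u}{\sqrt{c^2-u^2}}$ when $u\ge 0$ and symmetrically for $u<0$, so that in all cases $\{\tilde f(x)\ge u\} = \{f(x) \ge \|\gamma_x\|\,\tfrac{u}{\sqrt{c^2-u^2}}\}$ up to a $\P$-null set. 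Here $c^2 - u^2 = \frac{N}{\vol{}(M)} - u^2$, matching the argument appearing in \eqref{eq:almostDecomp}.

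Next, I would condition on $\gamma_x$. Given $\gamma_x$, the random variable $f(x)$ is a standard Gaussian independent of $\gamma_x$, and the threshold $v := \|\gamma_x\|\,\frac{u}{\sqrt{c^2-u^2}}$ is $\sigma(\gamma_x)$-measurable. Applying the classical Hermite expansion \eqref{eq:macaron} of the Heaviside function at the (now deterministic, conditionally) threshold $v$ gives
\begin{equation*}
\I\{\tilde f(x)\ge u\} = \sum_{q\in\N} J_q\!\left(\|\gamma_x\|\,\tfrac{u}{\sqrt{\frac{N}{\vol{}(M)}-u^2}}\right)\frac{H_q(f(x))}{q!},
\end{equation*}
where the series converges in $L^2(\P)$ conditionally on $\gamma_x$; one then upgrades this to unconditional $L^2(\P)$ convergence by the tower property, using that $\|\I\{\tilde f(x)\ge u\} - \sum_{q\le n}(\cdots)\|_{L^2(\P)}^2 = \E\big[\sum_{q>n} J_q(v)^2/q!\big]\le \E[\Phi(v)(1-\Phi(v))]\le 1/4$ and dominated convergence, since $\sum_q J_q(v)^2/q! = \E[\I\{f(x)\ge v\}^2\mid \gamma_x] - \text{(partial sum)} \to 0$ pointwise in $v$. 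This is essentially a routine Fubini/monotone-type argument once the conditional expansion is in place; the orthogonality of distinct Hermite polynomials in $f(x)$ (independent of $\gamma_x$) is what keeps the cross terms under control.

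For the second assertion — the convergence of the $q$th coefficient to $J_q(u\sqrt{\vol{}(M)})$ as $N\to\infty$ — the key point is the concentration of $\|\gamma_x\|^2$. By Definition \ref{def:gammax}, $\gamma_x$ is a standard Gaussian of rank $N-1$, so $\|\gamma_x\|^2$ is a $\chi^2_{N-1}$ variable and $\|\gamma_x\|^2/N \to 1$ in probability (indeed in $L^2$). Hence $\frac{N}{\vol{}(M)} - u^2 \sim \frac{N}{\vol{}(M)}$ and
\begin{equation*}
\|\gamma_x\|\,\frac{u}{\sqrt{\frac{N}{\vol{}(M)}-u^2}} = \sqrt{\|\gamma_x\|^2}\cdot\frac{u}{\sqrt{\frac{N}{\vol{}(M)}(1 - \tfrac{\vol{}(M)}{N}u^2)}} \;\longrightarrow\; \sqrt{\vol{}(M)}\,u
\end{equation*}
in probability as $N\to\infty$. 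Since each $J_q$ is continuous (it is $\Phi$ for $q=0$ and a polynomial times a Gaussian density otherwise) and bounded on $\R$ by an absolute constant (one has $|J_0|\le 1$ and $\sup_v |J_q(v)|<\infty$ for each fixed $q$), the continuous mapping theorem plus uniform integrability (boundedness suffices) give $J_q\big(\|\gamma_x\|\,\frac{u}{\sqrt{N/\vol{}(M)-u^2}}\big)\to J_q(u\sqrt{\vol{}(M)})$ in $L^2(\P)$.

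\textbf{Main obstacle.} I expect the genuinely delicate point to be the justification of the $L^2$ convergence of the series in \eqref{eq:almostDecomp} as stated — that is, controlling the interchange of the (conditional) Hermite expansion with the expectation over $\gamma_x$, and in particular confirming that the tail of the series is uniformly $L^2$-small. The sign bookkeeping in the equivalence $\{\tilde f\ge u\} = \{f(x)\ge v\}$ when $u<0$ (where dividing by $u$ flips the inequality, but this is compensated because $v<0$ too) also needs a clean, case-free argument; the cleanest route is probably to write the condition as $u\,\sqrt{f(x)^2+\|\gamma_x\|^2}\le c\,f(x)$ directly and note that both sides have the sign of $f(x)$ on the relevant region, or simply to invoke that $t\mapsto ct/\sqrt{t^2+r^2}$ is a strictly increasing bijection from $\R$ onto $(-c,c)$ for fixed $r>0$, so $\{\tilde f(x)\ge u\}$ pulls back to a half-line in $f(x)$ whose endpoint is exactly the inverse image of $u$ under this map, which is $v$.
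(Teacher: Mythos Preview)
Your proposal is correct and follows essentially the same approach as the paper: rewrite the event $\{\tilde f(x)\ge u\}$ as $\{f(x)\ge \|\gamma_x\|\,u/\sqrt{c^2-u^2}\}$ using the representation \eqref{eq:therootofalltricks} (the paper packages this as Lemma~\ref{lem:IFGammax}), then apply the standard Hermite expansion of the Heaviside at a $\sigma(\gamma_x)$-measurable threshold, exploiting the independence of $f(x)$ and $\gamma_x$. For the asymptotic part, your route via boundedness of each $J_q$ plus the continuous mapping theorem is slightly cleaner than the paper's moment bound $|J_q(t)|\le C_q|t|^{q-1}$ together with uniform-in-$N$ control of $\E[\|\gamma_x\|^q(N/\vol{}(M)-u^2)^{-q/2}]$; both arguments are valid and reach the same conclusion.
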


\begin{remark}
We stress that \cref{eq:almostDecomp} is not (yet) a Wiener chaos decomposition, see \cref{sec:Wichaos}: for any $q\in \N$, the term $J_q
 \tyu \|\gamma_x\| \sqrt\frac{v^2}{{N-v^2}} \uyt$ {(written in terms of the  \emph{effective level} $v = v(u) := u\sqrt{\vol{}(M)}$)} does not belong to a fixed chaos of $\gamma$. We will indeed derive the full Wiener chaos decomposition of $\I{\kop \lebf(x) \ge
 u \pok}$ by expanding such term into its chaos components --- see \cref{t:wienerchaos} below. 
\end{remark}

\begin{remark}
It would be tempting to derive the Wiener chaos expansion for $\I{\kop \lebf(x) \ge u \pok}$ by viewing it as the indicator of a Gaussian field evaluated at a random threshold, namely by exploiting the identity
\[
\I{\kop \lebf(x) \ge u \pok}
=
\I{\kop f(x) \ge
\frac{\|\gamma\|}{\sqrt{N}} \sqrt{\vol{}(M)}\,u \pok},
\]
where, as before, $f(x)$ denotes a Gaussian eigenfunction with unit variance. Here, the factor $\sqrt{\vol{}(M)}$ stems from our normalization conventions (recall that $\tilde f$ has unit $L^2$ norm, whereas $f$ has unit pointwise variance; see \cref{rmk:normalization}), while the prefactor $\|\gamma\|/\sqrt{N}$ converges to one by the law of large numbers as $N\to\infty$. One might then naively expect that the following $L^2$ expansion holds:
\bega
\I{\kop \lebf(x) \ge u \pok}
&=
\sum_{q\in\N} J_q
\tyu \frac{\|\gamma\|}{\sqrt{N}} \sqrt{\vol{}(M)}\,u \uyt
\frac{H_q\tyu f(x)\uyt}{q!}.
\eega
However, Proposition \ref{thm:Main3} shows that this heuristic reasoning is flawed: the eigenfunctions appearing on the left-hand side of the inequality are not independent of the (random) threshold on the right-hand side. As a consequence, the latter cannot be substituted directly into the standard expressions for the Wiener chaos decomposition of the excursion area in the Gaussian setting. The proofs below develop a general technique to overcome this difficulty and to derive valid Wiener chaos expansions within the present framework.
\end{remark}

We refer the reader to \cref{subsec:notaintegrals} for the notation adopted in the next statement.

\begin{mainthm}[Wiener Chaos decomposition]\label{t:wienerchaos} {\it Let the setting of \cref{sec:ourmodel} prevail, and let $\gamma_x$ be as in \cref{def:gammax}. For any $u\in (-\sqrt{\NN},\sqrt{\NN})$, the following expansion converges in $L^2$:
    \begin{align} 
\mathbb I \left\{ \lebf(x) \ge u \right\} & = \sum_{q\in\N}\mathbb I \left\{ \lebf(x) \ge u \right\}[q] \\
& = \sum_{q\in\N} \sum_{\substack{i=0,\\ \text{even }}}^q \mathfrak C_N(q,i,u) \times \int_{S(Y(x)^\perp)} \frac{H_{i}(\gamma_x^Tw)}{\sqrt{i!s_{N-2}s_{N-3}B(\frac{i+1}{2},\frac{N-2}{2})}}
 \frac{H_{q-i}(f(x))}{\sqrt{(q-i)!}} dw,
\end{align}
where $\{\mathfrak C_N(q,i,u)\}_{q,i\in\mathbb N, i\le q} \in \ell^2(\mathbb N)$ is a square-summable sequence of real numbers, defined as \begin{equation}
    \mathfrak C_N(q,i,u) = \sqrt{\frac{s_{N-2}}{i!s_{N-3}B(\frac{i+1}{2},\frac{N-2}{2})}}\E \left[ J_q
 \tyu \|\eta\| \sqrt{\frac{v^2}{{N-v^2}}} \uyt H_i(\eta_1)\right],
\end{equation} 
with $\eta\sim\mathcal N(0,1_{N-1})$, $v=v(u)=\sqrt{\vol{}(M)}u$, and we have introduced the usual Beta function, defined by
\[
B\tyu \frac{i+1}{2},\frac{N-2}{2}\uyt
=\frac{\Gamma(\frac{N-2}{2})\Gamma(\frac{i+1}{2})}{\Gamma(\frac{N+i-1}{2})}.
\]
}
\end{mainthm}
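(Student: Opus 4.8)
The starting point is the expansion from Proposition~\ref{thm:Main3}, namely
\be
\I\{\lebf(x)\ge u\} = \sum_{q\in\N} J_q\tyu \|\gamma_x\|\sqrt{\tfrac{v^2}{N-v^2}}\uyt \frac{H_q(f(x))}{q!},\qquad v=\sqrt{\vol{}(M)}\,u,
\ee
where $f(x)$ and $\gamma_x$ are independent, $f(x)\sim\mathcal N(0,1)$ and $\gamma_x$ is a standard Gaussian on the hyperplane $Y(x)^\perp\cong\R^{N-1}$. Since each $H_q(f(x))$ already lies in a pure chaos of the variable $f(x)$ (which is one coordinate of a Gaussian basis adapted to the splitting $\R^N = \R\, Y(x)\oplus Y(x)^\perp$), the task reduces to expanding the \emph{radial} random variable $J_q\tyu\|\gamma_x\|\sqrt{v^2/(N-v^2)}\uyt$, which is a function of $\|\gamma_x\|$ only, into Wiener chaoses of $\gamma_x$. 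The plan is therefore: (i) establish the chaos expansion of a general radial functional $\psi(\|\eta\|)$ of a standard Gaussian $\eta\sim\mathcal N(0,\mathbbm 1_{N-1})$; (ii) substitute $\psi = J_q(\,\cdot\,\sqrt{v^2/(N-v^2)})$; (iii) multiply by $H_q(f(x))/q!$, collect terms of fixed total degree, and identify the coefficients $\mathfrak C_N(q,i,u)$; (iv) check the $\ell^2$-summability claim.

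For step (i), the key classical fact is that a radial (i.e. $O(N-1)$-invariant) element of the $i$th chaos of $\eta\sim\mathcal N(0,\mathbbm 1_{N-1})$ is, up to scalar, the Gegenbauer/harmonic polynomial $\|\eta\|^{\,i}\,\mathsf C_i(\eta/\|\eta\|)$ averaged over the sphere — equivalently, $\int_{S^{N-2}} H_i(\eta^T w)\,dw$ up to normalization, because the radial projection kills all non-trivial spherical harmonic content and only the zonal part survives. Concretely, I would argue that the functions $\omega_i(\eta) := \int_{S(Y(x)^\perp)} H_i(\eta_x^T w)\,dw$, suitably normalized, form an orthonormal basis of the space of radial $L^2$ functions of $\eta$, with $\omega_i$ living in the $i$th chaos; only even $i$ contribute (odd zonal harmonics integrate to zero by the $w\mapsto -w$ symmetry). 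The normalizing constant is computed from $\E[\omega_i(\eta)^2]$, which by \eqref{e:innertenso} and the explicit moments of $H_i(\eta_1)$ reduces to the Beta integral $B(\tfrac{i+1}{2},\tfrac{N-2}{2})$ times sphere-volume factors $s_{N-2}, s_{N-3}$ — this is where those constants enter. The coefficient of $\psi(\|\eta\|)$ against $\omega_i$ is then $\E[\psi(\|\eta\|) H_i(\eta_1)]$ up to the same normalization (using that $\int_{S^{N-2}} H_i(\eta^Tw)\,dw$ has the same inner product with $\psi(\|\eta\|)$ as a single $H_i(\eta_1)$ does, by rotational symmetry), which yields exactly the formula stated for $\mathfrak C_N(q,i,u)$ after plugging in $\psi = J_q(\,\cdot\,\sqrt{v^2/(N-v^2)})$.

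For step (iii), once $J_q(\|\gamma_x\|\sqrt{v^2/(N-v^2)}) = \sum_{i\ge 0,\ \mathrm{even}} \lambda_{q,i}\,\tilde\omega_i(\gamma_x)$ with $\tilde\omega_i$ the $L^2$-normalized zonal chaos elements of $\gamma_x$, multiplying by $H_q(f(x))/q!$ and using that $\gamma_x\perp f(x)$ gives a product of chaos elements whose total degree is $i+(q-i)=q$ — wait, more care is needed: the outer sum over $q$ in Proposition~\ref{thm:Main3} already organizes by the degree in $f(x)$, and within each such term the radial expansion adds degree $i$ in $\gamma_x$, so the genuine total chaos degree of the $(q,i)$ summand is $i + q$, not $q$; I would re-index so that the statement's inner sum $\sum_{i=0,\ \mathrm{even}}^{q}$ reflects the correct bookkeeping after matching against $\I\{\lebf(x)\ge u\}[q]$ — i.e. the $q$th chaos projection collects, over all pairs $(q',i)$ with $q'$ the $f$-degree and $i$ even the $\gamma_x$-degree summing to $q$, which after relabeling is precisely $\sum_{i\le q,\ \mathrm{even}}$ of the term with $f$-degree $q-i$ and $\gamma_x$-degree $i$, matching the displayed formula with $H_{q-i}(f(x))$ and $H_i(\gamma_x^Tw)$. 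Orthogonality of distinct chaoses and of the $\tilde\omega_i$ then guarantees this is the honest Wiener-It\^o decomposition. Finally, $\ell^2$-summability of $\{\mathfrak C_N(q,i,u)\}$ is equivalent to $\sum_q \E[\I\{\lebf(x)\ge u\}[q]^2] = \Var(\I\{\lebf(x)\ge u\}) + \E[\cdot]^2 \le 1 < \infty$, which holds automatically since the indicator is bounded; one just has to verify the Parseval bookkeeping matches up (the normalization constants were chosen precisely so that $\sum_{q,i}\mathfrak C_N(q,i,u)^2 = \E[\I\{\lebf(x)\ge u\}^2]$).

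\textbf{Main obstacle.} The delicate point is step (i): proving rigorously that the zonal functions $\int_{S(Y(x)^\perp)} H_i(\gamma_x^Tw)\,dw$ (for even $i$) span \emph{all} radial $L^2$-functions of $\gamma_x$ and are mutually orthogonal with the stated norms. This requires identifying the space of $O(N-1)$-invariant chaos elements with the zonal spherical harmonics on $S^{N-2}$ tensored with radial weights, and then carrying out the Beta-integral normalization cleanly; it is essentially the $q=0$ case of Theorem~\ref{t:nonmale} combined with the radial (Laguerre-type) decomposition of $L^2(\R^{N-1},\text{Gaussian})$ into chaoses, but assembling the exact constants $s_{N-2}, s_{N-3}, B(\tfrac{i+1}{2},\tfrac{N-2}{2})$ without sign or factorial errors is the real labor. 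A secondary subtlety is justifying the interchange of the two infinite sums (over $q$ and over $i$) and the $L^2$-convergence of the doubly-indexed series, which follows from Fubini/Parseval once the orthogonality in (i)--(iii) is in place, but should be stated carefully given that Proposition~\ref{thm:Main3} only asserts $L^2$-convergence of the outer sum.
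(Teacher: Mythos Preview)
Your proposal is correct and follows essentially the same route as the paper. The paper packages your step~(i) as a standalone result (\cref{lem:magictrick} and \cref{lem:multimagictrick} in \cref{sec:magictrick}): it shows once and for all that any $F(\gamma)\in\mathscr{A}(1,N-1)$, i.e.\ any square-integrable functional of $(f(x),\|\gamma_x\|)$, has $q$th chaos equal to $\sum_{q_1+q_2=q,\ q_2\text{ even}} c_{(q_1,q_2)} H_{q_1}(f(x))\int_{S(Y(x)^\perp)}H_{q_2}(\gamma_x^Tv)\,dv$, with coefficients $c_{(q_1,q_2)}$ given by exactly the formula you derive; then it applies this \emph{directly} to the indicator (which lies in $\mathscr{A}(1,N-1)$ by \eqref{eq:therootofalltricks}) and invokes Proposition~\ref{thm:Main3} only at the end to simplify $\E[\I\{\lebf(x)\ge u\}H_{q_1}(f(x))H_{q_2}(\gamma_x^{(1)})]$ into $\E[J_{q_1}(\cdot)H_{q_2}(\gamma_x^{(1)})]$. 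Your approach instead starts from Proposition~\ref{thm:Main3} and expands each radial factor separately before re-collecting by total degree---mathematically equivalent, just organized in the reverse order. One small correction: the technical ingredient you need for step~(i) is not the $q=0$ case of Theorem~\ref{t:nonmale} (which concerns $0$-homogeneous functionals and spherical-harmonic projections), but precisely the paper's \cref{lem:magictrick}, whose proof is the short averaging argument you sketch.
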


\begin{remark}Because of the presence of a spherical integral, the normalization of the Hermite polynomials in the series expansion of Theorem \ref{t:wienerchaos} is not the square root of a factorial as usual, but it is chosen to ensure that $\|\mathfrak C_N(\cdot,\cdot,u)\|_{\ell^2(\N)}^2$ {is the global variance of $\mathbb I \left\{ \lebf(x) \ge u \right\}$}; see \cref{lem:SphericalChaos}
below.
\end{remark}

The derivation of the Wiener chaos expansion for the excursion area of uniform random waves makes the cancellation of the second-order chaos explicit; see \cref{prop:secondchaos}. Remarkably, this cancellation arises from an exact compensation between the fluctuations of its two constituent terms: one depending on $f(x)$ and the other on $\|\gamma_x\|$, as defined in \eqref{eq:gammaperp}. Beyond this structural insight, the availability of an explicit chaos decomposition enables more refined results on variance asymptotics. In particular, since the cancellation occurs for every threshold level $u$, one might conjecture that the asymptotic variance exhibits the same rate for all $u$. {However, this is not the case on the two-sphere $M=S^2$.} Indeed, in this framework a form of second-order Berry cancellation persists {at the level of the fourth chaotic projection}: in the nodal case ($u=0$), the variance is of strictly smaller asymptotic order than for (almost) any other threshold $u\neq 0$. In addition, two further cancellation levels arise.

\begin{proposition} \label{p:lowerbound} Let the setting of \cref{sec:ourmodel} prevail; suppose that $M=S^2$, { that $\lebf$ is a uniform random wave of eigenvalue $-\ell(\ell+1)$ and let $N=2\ell +1$}. Let $\m E_u(\tilde f )$ be defined as in \cref{eq:ExcursionArea}.  
Then, 
as $N\to\infty$, the fourth-order chaos of the excursion area has variance
\begin{align} \notag
     \Var\left(\m E_u(\tilde f )[4] \right) &=   \frac{J_4\left(u\sqrt{\vol{}(M)}\right)^2}{(4!)^2} \Var \left( \int_M H_4(f(x)) dx \right) + o \left( \Var \left( \int_M H_4(f(x)) dx \right)\right) \\
    & = {e^{-{u^2}\vol{}(M)}H_3\tyu u\sqrt{\vol{}(M)}\uyt^2   D\frac{\log N}{N^2} + O \left( \frac{1}{N^2} \right) ,\label{e:rrr}}
\end{align}
for some positive constant $D>0$. Moreover, $\Var\left(\m E_u(\tilde f )[q] \right)=O(N^{-2})$ for all $q\geq 3$, $q\neq 4$.
\end{proposition}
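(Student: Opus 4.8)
\emph{Strategy.} The plan is to extract, from the explicit chaos decomposition of \cref{thm:Main3} (equivalently \cref{t:wienerchaos}), a ``Gaussian‑type'' leading term in each projection $\m E_u(\lebf)[q]$, to bound the genuinely non‑Gaussian remainder, and then to reduce the variance of the leading term to a classical moment of Legendre polynomials. Throughout write $v=v(u):=u\sqrt{\vol{}(M)}$, $N=2\ell+1$, and let $\rho(x,x'):=\E[f(x)f(x')]=P_\ell(\langle x,x'\rangle)$ be the covariance of the underlying Gaussian field (the degree‑$\ell$ Legendre polynomial), so that $\int_M f(x)\,dx=0$ since $\ell\ge 1$. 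Recall from \eqref{eq:gammaperp} that $\|\gamma_x\|^2=\|\gamma\|^2-f(x)^2$ with $f(x)$ independent of $\gamma_x$. By \cref{thm:Main3}, $\I\{\lebf(x)\ge u\}=\sum_{q'}J_{q'}\!\big(\|\gamma_x\|\tfrac{v}{\sqrt{N-v^2}}\big)\tfrac{H_{q'}(f(x))}{q'!}$; since $t\mapsto J_{q'}(\|\gamma_x\|t)$ depends on $\gamma_x$ only through $\|\gamma_x\|^2$, it lies in the span of the even chaoses of $\gamma_x$, and as $\gamma_x\perp f(x)$, extracting the $q$‑th chaos and integrating over $M$ gives
\[
\m E_u(\lebf)[q]=a_N^{(q)}\int_M H_q(f(x))\,dx+\m R_q,\qquad a_N^{(q)}:=\frac{1}{q!}\,\E\Big[J_q\Big(\|\gamma_x\|\,\tfrac{v}{\sqrt{N-v^2}}\Big)\Big],
\]
where $\m R_q:=\sum_{j\ge 1}\tfrac{1}{(q-2j)!}\int_M\big[J_{q-2j}\big(\|\gamma_x\|\tfrac{v}{\sqrt{N-v^2}}\big)\big]_{[2j]}H_{q-2j}(f(x))\,dx$ is the non‑Gaussian remainder. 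Taylor‑expanding $J_q$ around the concentration point $\|\gamma_x\|^2\approx N-1$ and using $\|\gamma_x\|\tfrac{v}{\sqrt{N-v^2}}-v=O_{L^2}(N^{-1/2})$ with mean $O(N^{-1})$ (which simultaneously recovers the $L^2$‑convergence in \cref{thm:Main3}) yields $a_N^{(q)}=\tfrac{J_q(v)}{q!}+O(N^{-1})$.

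\emph{Estimating the remainder.} For $j\ge 1$ the chaos‑$2j$ component of a radial function of $\gamma_x\in\R^{N-1}$ is again radial, hence $\big[J_{q-2j}(\|\gamma_x\|\tfrac{v}{\sqrt{N-v^2}})\big]_{[2j]}=b_N^{(j)}\,\varrho^{(N-1)}_{2j}(\|\gamma_x\|^2)$, with $b_N^{(j)}$ independent of $x$ and $\varrho^{(N-1)}_{2j}$ the unit‑norm radial element of the $2j$‑th chaos --- a degree‑$j$ polynomial of $\|\gamma_x\|^2$ orthogonal to every lower‑degree polynomial of $\|\gamma_x\|^2$. Replacing $J_{q-2j}$ by its degree‑$(j-1)$ Taylor polynomial (which is $\varrho^{(N-1)}_{2j}$‑orthogonal) and using the concentration of $\|\gamma_x\|^2$ gives $b_N^{(j)}=O(N^{-j/2})$. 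Since $\varrho^{(N-1)}_{2j}(\|\gamma_x\|^2)=\varrho^{(N-1)}_{2j}(\|\gamma\|^2-f(x)^2)$ is a polynomial of degree $j$ in $\|\gamma\|^2$ and $f(x)^2$, integrating $\varrho^{(N-1)}_{2j}(\|\gamma_x\|^2)H_{q-2j}(f(x))$ over $M$ and using the Hermite product formula together with $\int_M f(x)\,dx=0$ expresses it as an explicit linear combination of $\|\gamma\|^2-N$ and of $\int_M H_m(f(x))\,dx$, $m\le q$, whose variances are classical: $\big\|\int_M H_m(f)\,dx\big\|_{L^2}^2=m!\,\iint_{M\times M}P_\ell(\langle x,x'\rangle)^m\,dx\,dx'$ is $O(N^{-1})$ for $m=2$, $O(N^{-2}\log N)$ for $m=4$, and $O(N^{-2})$ for $m\ge 3$, $m\ne 4$. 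Combined with $b_N^{(j)}=O(N^{-j/2})$ this yields $\|\m R_q\|_{L^2}=O(N^{-1})$ for every $q\ge 3$. Moreover a short Wick (diagram) computation --- typified by $\E[H_4(f(x))\,(\|\gamma\|^2-N)\,H_2(f(x'))]=4!\,\rho(x,x')^2$ and $\E[H_4(f(x))\,\varrho^{(N-1)}_4(\|\gamma_{x'}\|^2)]=O\!\big(N^{-1}(1-\rho(x,x')^2)^2\big)$ --- shows that the overlap $\E[\m R_q\int_M H_q(f(x))\,dx]$ reduces to the coefficients $b_N^{(j)}$ multiplied by combinations of $\iint P_\ell^2$ and $\iint P_\ell^4$, hence is $O(N^{-2})$, strictly better than the Cauchy--Schwarz bound $O(N^{-1}\sqrt{\log N})$.

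\emph{Conclusion.} For $q=4$,
\begin{align*}
\Var\big(\m E_u(\lebf)[4]\big)
&=(a_N^{(4)})^2\,\Var\Big(\int_M H_4(f(x))\,dx\Big)+2\,a_N^{(4)}\,\E\Big[\m R_4\!\int_M H_4(f(x))\,dx\Big]+\Var(\m R_4)\\
&=\frac{J_4(v)^2}{(4!)^2}\,\Var\Big(\int_M H_4(f(x))\,dx\Big)+O(N^{-2}),
\end{align*}
using $a_N^{(4)}=J_4(v)/4!+O(N^{-1})$, $\Var(\int_M H_4(f)\,dx)=O(N^{-2}\log N)$ and the two remainder estimates; this is the first equality in \eqref{e:rrr}. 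For the second, $\Var(\int_M H_4(f)\,dx)=4!\iint_{M\times M}\rho^4=4!\,s_2 s_1\int_{-1}^{1}P_\ell(t)^4\,dt$, and the classical (Hilb/Szeg\H o) asymptotics for $P_\ell$ give $\int_{-1}^1 P_\ell(t)^4\,dt=c\,\ell^{-2}\log\ell+O(\ell^{-2})$ with $c>0$ explicit: with $t=\cos\theta$, the range $\theta\lesssim 1/\ell$ contributes $O(\ell^{-2})$ (as $|P_\ell|\le 1$), while on $1/\ell\lesssim\theta\le\pi/2$ the oscillatory formula $P_\ell(\cos\theta)\sim\sqrt{2/(\pi\ell\sin\theta)}\cos\!\big((\ell+\tfrac12)\theta-\tfrac\pi4\big)$ yields, after averaging the fourth power of the cosine, $P_\ell(\cos\theta)^4\sin\theta\sim\mathrm{const}\cdot\ell^{-2}\theta^{-1}$, whence the logarithm. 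Substituting $\ell=(N-1)/2$ and using $J_4(v)^2=H_3(v)^2\varphi(v)^2=\tfrac{1}{2\pi}e^{-v^2}H_3(v)^2$ with $v^2=u^2\vol{}(M)$ gives \eqref{e:rrr} with a positive constant $D$ (and, in particular, vanishing of the $\log N/N^2$ term exactly at the roots of $H_3$, i.e.\ at $u=0$ and $u=\pm\sqrt{3/\vol{}(M)}$). Finally, for $q\ge 3$, $q\ne 4$, the same decomposition together with $\int_{-1}^1 P_\ell(t)^q\,dt=O(\ell^{-2})$ --- the oscillatory cancellation in the asymptotic formula removing the logarithm (for $q\ge 5$ because $\theta^{1-q/2}$ is integrably singular past the cut‑off $\theta\gtrsim 1/\ell$, for $q=3$ by one integration by parts in the genuinely oscillatory integral) --- gives $\Var(\m E_u(\lebf)[q])=(a_N^{(q)})^2\,q!\iint P_\ell^q+O(N^{-2})=O(N^{-2})$.

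\emph{Main obstacle.} The delicate point is not the leading term --- once isolated, it is the classical Gaussian excursion‑area computation at the effective level $v$ --- but the control of $\m R_q$, and especially showing that the cross term $\E[\m R_q\int_M H_q(f)\,dx]$ is $O(N^{-2})$ rather than the weaker $O(N^{-1}\sqrt{\log N})$ provided by Cauchy--Schwarz. This forces one to exploit the exact identities $\|\gamma_x\|^2=\|\gamma\|^2-f(x)^2$ and $\int_M f(x)\,dx=0$ in order to reduce every relevant second moment to an integral of a power of the Legendre kernel, and then to use the gain $\iint_{M\times M}P_\ell^2=O(N^{-1})$.
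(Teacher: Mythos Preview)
Your argument is correct and follows essentially the same route as the paper. Both proofs split $\m E_u(\lebf)[4]$ into the ``Gaussian'' piece $a_N^{(4)}\int_M H_4(f)\,dx$ and a remainder coming from the mixed terms $H_{4-2j}(f(x))$ times the $2j$-th radial chaos of $\gamma_x$, and both reduce the variance to a linear combination of the Legendre moments $\iint_{M\times M}P_\ell(\langle x,x'\rangle)^m\,dx\,dx'$ for $m=0,2,4$, before invoking the classical asymptotics $\iint P_\ell^2=O(N^{-1})$, $\iint P_\ell^4\sim c\,N^{-2}\log N$, $\iint P_\ell^m=O(N^{-2})$ for $m\ge 3$, $m\ne 4$.

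The only substantive difference is in how the covariances are handled. The paper works with the orthonormal basis $\mathscr{H}_q(1,N-1)$ of \cref{lem:SphericalChaos}, computes the three coefficients $C_N(4,i,u)$ exactly via the Gaussian change of variables of \cref{lem:gaussianCOV}, and then evaluates all six pairwise covariances explicitly (the Appendix result \cref{thm:cov4th}), obtaining closed formulas in $k(x,z)^0,k(x,z)^2,k(x,z)^4$ with rational-in-$N$ coefficients; the cancellation making the $\iint k^2$-coefficient $O(N^{-1})$ then becomes visible by inspection. You instead use the scalar identity $\|\gamma_x\|^2=\|\gamma\|^2-f(x)^2$ together with the Hermite product formula to rewrite the remainder directly in terms of $\|\gamma\|^2-N$ and $\int_M H_m(f)$, bypassing the explicit covariance table. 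This is a legitimate shortcut and is exactly what makes the cross-term bound $O(N^{-2})$ transparent; the paper's explicit tabulation, on the other hand, yields the precise coefficients $D_N(j,u)$ and would let one track the constant $D$ if desired. One small slip: the naive Cauchy--Schwarz bound on the cross term, once $\|\m R_4\|_{L^2}=O(N^{-1})$ is in hand, is $O(N^{-2}\sqrt{\log N})$, not $O(N^{-1}\sqrt{\log N})$; this already suffices for the first line of \eqref{e:rrr}, and your sharper $O(N^{-2})$ is needed only for the second.
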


\begin{remark}
    Note that the expression $H_3( u\sqrt{\vol{}(M)})$ is a degree three polynomial in $u$, vanishing at $u=0$ and in two other points, which we will denote as $u_1,u_2$. It is possible to trace the exact value of the constant $D$ from the proof, see \cref{sec:main3}.
\end{remark}

\begin{remark}
    Because of the orthogonality of chaotic projections, the right-hand side of \eqref{e:rrr} provides a lower bound for the variance of the excursion area $\m E_u(\tilde f )$, whenever $u\notin \{0, u_1,u_2\}$.
\end{remark}

In the next Remark, we recall for convenience the variance asymptotics for the different chaoses in the Gaussian case. 

\begin{remark}[Variance asymptotics in the Gaussian case]\label{VarAsyGau} In the Gaussian case, we have that 
\begin{equation}
    \Var(\m E_u(f)[q]) = \frac{J_q(u)^2}{(q!)^2} \Var\left(\int_{S^2} H_q(f(x))dx  \right),
\end{equation}
see also \cref{eq:macaron}. Notice that $J_q(u)=0$ if and only if $H_{q-1}(u)=0$; hence, $J_q(0)=0$ if $q$ is even. In \cite{MaWi2011excarea}, it is proved that \begin{align}
        & \Var\left(\int_{S^2} H_2(f(x))dx  \right) = c_2 \frac{1}{N}, \\
        & \Var\left(\int_{S^2} H_4(f(x))dx  \right) = c_4 \frac{\log N}{N^2} ,\\
        & \Var\left(\int_{S^2} H_q(f(x))dx  \right) = c_q\frac1{N^2}, \qquad q\ge 3, q\neq 4, \\
        & \Var\left(\sum_{q\ge 3, q\neq 4}  \frac{J_q(u)}{q!}\int_{S^2} H_q(f(x))dx  \right) = C \frac1{N^2},\label{eq:oddchaoses}
    \end{align} 
    for some positive constants $c_q, \ q \in \mathbb{N}$ and $C$.
    \end{remark}
    \begin{remark}
    Note that the even-order chaotic projections $\m E_u(\tilde f )[2q]$ (uniform case) and $\m E_u( f )[2q]$ (Gaussian case) for $u=0$ are both identically zero for all $q$. This is also explained by the symmetry of the functional $\m E_0$, which is odd. Since $\m E_0(f) = \m E_0(\tilde f)$, the variance asymptotics extend to the uniform case: $\Var(\m E_0 (\tilde f)) = \Var(\m E_0 ( f))= O(N^{-2})$, where the last equality follows from \cref{eq:oddchaoses}. For non-zero levels $u$, in the Gaussian case we have $\Var (\m E_u(f)) = O(N^{-1})$: this is due to the presence of the second chaos that \emph{masks} the contribution of the fourth chaos, which has variance $O(N^{-2} \log N )$, and of the sum of all the other chaoses, which have variance $O(N^{-2})$.
\end{remark}

\section{Main Result I: Formal Statements and Proofs}\label{sec:main1}

In this section we prove \cref{thm:Main1} and its two consequences, \cref{c:splendid0} and \cref{c:splendid1}. \cref{thm:Main1} is a general criterion to infer the cancellation of the second chaos of general functionals of a uniform random field, whenever it is compatible with an irreducible group representation. We obtain \cref{c:splendid0} and \cref{c:splendid1} by applying \cref{thm:Main1} to the case of uniform random spherical harmonics. 
The same ideas could be extended to monochromatic uniform random waves on more general homogeneous spaces; however, this case is addressed in forthcoming Section \ref{sec:main2} with somewhat different techniques. 

\cref{thm:Main1} applies naturally to the investigation of random spherical harmonics, our prototype case. Thus, it is natural to start this section by recalling some basic facts about such a setting. 

\subsection{Representations and spherical harmonics}
Let $\m H_{\ell,d+1}\subset L^2(S^{d})$ be the vector space generated by the spherical harmonics $Y_{\ell,\bullet}$ of degree $\ell$, see \cref{r:afterRW}.(b) and \cref{def:sph_harm_HqN}; in other words, $\m H_{\ell,d+1}=\ker(\Delta+\lambda_\ell^2)$, where $\lambda_\ell^2=\ell(\ell+d-1)$. In this paper we always work under the identification $\R^{N_\ell}\cong \m H_{\ell,d+1}$, with $N_\ell$ as in \cref{e:sphericalharmonics}; consequently $SO(N_\ell)$ is identified with the space of linear orientation-preserving $L^2$-isometries of $\m H_{\ell,d+1}$. Taking any $R\in O(d+1)$, we obtain by pull-back an isometry $\f\mapsto R^*\f$ of $\m H_{\ell,d+1}$ as follows:
\be 
(R^*\f)(x)=\f(Rx),\quad \forall x\in M,\ \f\in V_\ell
\ee
The isometry $R^*$ is thus an element of $SO(N_\ell)$ and the function
\be \label{eq:reprSOdN}
L\colon SO(d+1)\to SO(N_\ell),\quad R\mapsto L(R):=R^*
\ee
is a group homomorphism, i.e., a representation of $SO(d+1)$. For any $v\in \R^{N_\ell}$, the set
\be 
O(v)=\kop R^*v : R\in SO(d+1)\pok
\ee
is called the \emph{orbit} of $v$; it is a subset of $\R^{N_\ell}$. We recall that a group representation $L\colon G\to SO(N)$ is said to be \emph{irreducible} if and only if there are no non-trivial vector subspaces that are fixed by $L$; equivalently, if every non-zero orbit spans the whole space, see \cite{FarautBook}.
\begin{proposition}\label{prop:Lirrep}
    The representation $L$ defined at \cref{eq:reprSOdN} is irreducible: every orbit $O(v)$ spans the whole space $\R^{N_\ell}$:
    \be 
\mathrm{span}\ O(v)=\R^{N_\ell}, \quad \forall v \in \R^{N_\ell}\backslash\{0\}.
    \ee
\end{proposition}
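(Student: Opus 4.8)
The plan is to prove irreducibility by showing that every nonzero orbit spans $\R^{N_\ell}\cong \m H_{\ell,d+1}$, which by the characterization recalled just before the statement is equivalent to irreducibility of $L$. Equivalently, it suffices to show that any $SO(d+1)$-invariant linear subspace $W\subseteq \m H_{\ell,d+1}$ is either $\{0\}$ or all of $\m H_{\ell,d+1}$: if $W$ is invariant and nonzero, pick $0\neq v\in W$; then $O(v)\subseteq W$ and $\mathrm{span}\,O(v)\subseteq W$, so proving $\mathrm{span}\,O(v)=\m H_{\ell,d+1}$ for every nonzero $v$ gives $W=\m H_{\ell,d+1}$. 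So I will work with the invariant-subspace formulation, which is cleaner.

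The key step is a reproducing-kernel / zonal-harmonic argument. Recall that $\m H_{\ell,d+1}$ is finite-dimensional, hence a reproducing kernel Hilbert space inside $L^2(S^d)$: there is a kernel $Z_\ell(x,y)$ (the zonal harmonic of degree $\ell$, explicitly a Gegenbauer polynomial in $\langle x,y\rangle$) such that $\f(x)=\int_{S^d}Z_\ell(x,y)\f(y)\,dy$ for all $\f\in\m H_{\ell,d+1}$, and for fixed $y$ the function $x\mapsto Z_\ell(x,y)$ lies in $\m H_{\ell,d+1}$. Now suppose $W\subseteq\m H_{\ell,d+1}$ is a nonzero invariant subspace, and let $P$ be the orthogonal projector onto $W$; invariance of $W$ under the (orthogonal, hence unitary) action $R\mapsto R^*$ means $P$ commutes with every $R^*$. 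Apply $P$ to the reproducing kernel: the function $K_y(x):=(P Z_\ell(\cdot,y))(x)$ lies in $W$ for every $y$. Using that $Z_\ell$ is $O(d+1)$-invariant in the sense $Z_\ell(Rx,Ry)=Z_\ell(x,y)$, together with $P R^* = R^* P$, one gets $K_{Ry}(Rx)=K_y(x)$, i.e. $K_y$ transforms equivariantly. Evaluating the defining property at $y$ and using $PZ_\ell(\cdot,y)\in W$ plus $Z_\ell$ reproducing on $W$, one finds that for any $\f\in W$, $\f(y)=\langle \f, Z_\ell(\cdot,y)\rangle=\langle \f, PZ_\ell(\cdot,y)\rangle = \langle \f, K_y\rangle$, so $(K_y)_{y\in S^d}$ is a reproducing family for $W$; since $W\neq\{0\}$, there is $y_0$ with $K_{y_0}\neq 0$. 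The equivariance $K_{Ry_0}=R^*K_{y_0}$ then shows $O(K_{y_0})\subseteq W$. Finally, by the transitivity of $SO(d+1)$ on $S^d$ and rotational symmetry of the stabilizer of a point, $K_{y_0}$ must be (a multiple of) the zonal function $Z_\ell(\cdot,y_0)$ itself — i.e. $P$ acts as a scalar on each zonal, hence $P=\mathrm{Id}$ — giving $W=\m H_{\ell,d+1}$.

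An alternative, and perhaps the cleanest route to present, is Schur-type: show directly that any $P\in\mathrm{End}(\m H_{\ell,d+1})$ commuting with all $R^*$, $R\in SO(d+1)$, is a scalar. Because the stabilizer of the north pole $e_{d+1}$ is $SO(d)$ acting on the tangent space, and $\m H_{\ell,d+1}$ contains a unique (up to scalar) $SO(d)$-fixed vector, namely the zonal harmonic $Z_\ell(\cdot,e_{d+1})$, the commutant must preserve this line; combined with transitivity of $SO(d+1)$ on the sphere this forces $P$ to be determined by a single eigenvalue, hence scalar. Concretely: $P$ commutes with $R^*$ for all $R$ fixing $e_{d+1}$, so $P$ maps the $1$-dimensional $SO(d)$-fixed subspace to itself, say $PZ_\ell(\cdot,e_{d+1})=\lambda Z_\ell(\cdot,e_{d+1})$; then for any $y=Re_{d+1}$, $PZ_\ell(\cdot,y)=PR^*Z_\ell(\cdot,e_{d+1})=R^*PZ_\ell(\cdot,e_{d+1})=\lambda Z_\ell(\cdot,y)$; since $\{Z_\ell(\cdot,y):y\in S^d\}$ spans $\m H_{\ell,d+1}$ (a standard fact for RKHS / because no nonzero element is orthogonal to all of them), $P=\lambda\,\mathrm{Id}$. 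Applying this with $P$ the projector onto an invariant subspace $W$ gives $\lambda\in\{0,1\}$ and hence $W\in\{0,\m H_{\ell,d+1}\}$.

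I expect the main obstacle to be making rigorous the two "standard" inputs — (1) that $\m H_{\ell,d+1}$ has a reproducing kernel of the form $Z_\ell(\langle x,y\rangle)$ with the stated $O(d+1)$-equivariance, and (2) that the zonals $\{Z_\ell(\cdot,y):y\in S^d\}$ span the whole space and that the $SO(d)$-fixed subspace is one-dimensional (uniqueness of the zonal). Both are classical (see e.g. \cite[Chapter IV]{SteinWeissBook} or \cite[Chapter 9]{SpecialFunctions}), so in the write-up I would cite them rather than reprove them, and spend the bulk of the argument on the commutant/Schur step, which is short. A lower-tech fallback, avoiding reproducing kernels, is to argue directly on orbits: given $0\neq v=\f\in\m H_{\ell,d+1}$, show $\mathrm{span}\,O(\f)$ is an invariant subspace; if it were proper, pick $0\neq\psi$ in its orthogonal complement inside $\m H_{\ell,d+1}$; then $\int_{SO(d+1)}\langle R^*\f,\psi\rangle^2\,dR=0$, but this integral, evaluated via the addition formula for spherical harmonics, equals a positive multiple of $\|\f\|^2\|\psi\|^2$ — a contradiction. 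I would likely present the Schur/commutant version in the main text and relegate the details of the classical facts to citations.
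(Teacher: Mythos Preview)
Your proposal is correct. The paper's own proof consists of a single sentence citing \cite[Theorem 9.3.4]{FarautBook} (with a pointer to \cite[Prop.~3.27]{Marinucci_Peccati_2011} for the case $d=2$), so there is no argument to compare against in detail.

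What you have written is essentially a self-contained reconstruction of the classical proof that lies behind that citation. Your Schur/commutant version---using uniqueness of the $SO(d)$-fixed zonal harmonic at the north pole, transitivity of $SO(d+1)$ on $S^d$, and the fact that the zonals span $\m H_{\ell,d+1}$---is exactly the standard route taken in references such as Faraut or Stein--Weiss. The reproducing-kernel variant and the addition-formula fallback are likewise well-known equivalent arguments. One cosmetic point: the equivariance reads $R^*Z_\ell(\cdot,e_{d+1})=Z_\ell(\cdot,R^{-1}e_{d+1})$ rather than $Z_\ell(\cdot,Re_{d+1})$, but since $SO(d+1)$ acts transitively on $S^d$ either way, this does not affect the conclusion. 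If you wish to match the paper's style, a one-line citation suffices; if you prefer a self-contained account, your Schur argument is the cleanest of the three and only needs the two classical inputs you already flagged.
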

\begin{proof}
The result follows directly from 
 \cite[Theorem 9.3.4]{FarautBook}. (See also \cite[Prop. 3.27]{Marinucci_Peccati_2011} for a more explicit proof in the case $d=2$.)
\end{proof}
{
\subsection{Preliminary remarks on invariance properties}
{Let us recall our main setting in the spherical case $M=S^d$, discussed in \cref{sec:ourmodel}. We consider the uniform random field $\tilde f$ defined by
\be 
\tilde{f}_\ell(\cdot)=\left\langle \frac{\gamma}{\|\gamma\|}, Y(\cdot) \right\rangle,
\ee
where $Y(\cdot)=Y_{\ell,\bullet}$ denotes the vector of spherical harmonics introduced in \cref{e:sphericalharmonics}, and $\gamma$ is a standard Gaussian vector in $\R^{N_\ell}$.
We focus first on the excursion area functional, which we view as a functional of $\gamma$:
\be 
E_u(\gamma)=\int_M \I\kop \left\langle \frac{\gamma}{\|\gamma\|},Y(x)\right\rangle \ge u \pok \, dx.
\ee
As a preliminary observation, note that $E_u(g\gamma)\neq E_u(\gamma)$ for a generic rotation $g\in SO(N_\ell)$, even though the law of the two functionals is the same. Nevertheless, the excursion area enjoys a more structured form of rotational symmetry. Recalling the definition of the representation $L$ in \cref{eq:reprSOdN}, we have $Y(Rx)=R^*Y(x)$, which immediately implies that the excursion area functional is $L$-invariant, namely
\be 
E_u\tyu R^* \gamma\uyt = E_u\tyu \gamma\uyt, \quad \forall R\in SO(d+1).
\ee
Moreover, since $E_u$ depends only on the uniform random wave $\tilde f$, it is positively $0$-homogeneous (cf. \cref{d:0hom}):
\be
E_u(t\gamma)=E_u(\gamma), \quad \forall t>0.
\ee
}

These two properties---$L$-invariance and positive $0$-homogeneity---are precisely the assumptions required in \cref{thm:Main1} and Corollary \ref{c:ofmain1}, showing that, when the representation $L$ is irreducible, these properties alone imply the cancellation of the second Wiener chaos projection of $E_u(\gamma)$. Although we have illustrated $L$-invariance and positive $0$-homogeneity here through the specific example of the excursion area, the same structural features will be shown to hold for a much broader class of functionals. In particular, \cref{c:splendid0} and \cref{c:splendid1} follow by verifying that the same two properties are satisfied by all functionals appearing therein, combined with the irreducibility of the representation $L$ established in \cref{prop:Lirrep}.

}
{

\subsection{Further notation}\label{ss:furthnot} Let $\gamma\sim\mathcal{N}(0, \mathbb{I}_N)$, and consider a (not necessarily $0$-homogeneous) square-integrable random variable $E(\gamma)$. As in the statement of Theorem~\ref{t:nonmale}, and using 
Remark~\ref{r:doh}-(b), we set
\begin{equation}\label{e:tensorq}
  K_q := K_{E(\gamma)[q]}, \qquad q \ge 1,
\end{equation}
that is, $K_q \in \mathbb{T}(q,N)$ is the unique symmetric tensor for which 
\eqref{e:zorro} holds.
 For $q\geq 1$, we also define the homogeneous polynomial
\begin{equation}\label{e:hompq}
    P_q(x): =  q!\sum_{(i_1,..,i_q)\in[N]^q } K_q(i_1,...,i_q)\, x_{i_1}\cdots x_{i_q}, \quad x = (x_1,...,x_N)\in \R^N,
\end{equation}
and we observe that \eqref{e:innertenso} yields the identity
\begin{equation}\label{e:hompqS}
    P_q(v)= \mathbb{E}\big[E(\gamma)[q] H_q(\langle \gamma,v\rangle)\big] = \mathbb{E}\big[E(\gamma) H_q(\langle \gamma,v\rangle)\big] , \quad v \in S^{N-1},
\end{equation}
where we have used the fact that, for all $v= (v_1,...,v_N)\in S^{N-1}$, 
$$
C_q\ni H_q(\langle \gamma,v\rangle) = \sum_{(i_1,..,i_q)\in[N]^q } (v_{i_1}\cdots v_{i_q})\,\,: \gamma^{i_1}\cdots \gamma^{i_q} : \,\,.
$$  
It is clear that $E(\gamma)[q] = 0$ if and only of $P_q(v)=0$, for all $v\in S^{N-1}$.

}

{
\subsection{Proof of Theorem \ref{thm:Main1}}
The fact that ${\rm Tr}\, K_2 = 0$ is a direct consequence of $0$-homogeneity and Theorem \ref{t:nonmale}. To deal with the second part of the statement, we adopt the notation introduced in Section \ref{ss:furthnot}. Using the $L$-invariance of $E$, we infer that, for all $q\geq 1$ and all $R\in G$,
\bega \label{e:jott1}
P_q(R^*v)
&=
\E\kop E(\gamma) H_q\tyu \langle \gamma ,R^*v\rangle \uyt \pok
=
\E\kop E(\gamma) H_q\tyu \langle (R^*)^T\gamma ,v\rangle \uyt \pok 
\\
&=
\E\kop E(R^*\gamma) H_q\tyu \langle \gamma ,v\rangle \uyt \pok = 
P_q(v).
\eega
The last relation implies that the orbit $O(v)$ of any $v\in \R^N$ is contained in a level set of $P_q$, from which we deduce that
\be\label{eq:orbit1} 
\forall v\in S^{N-1} \,\, \mbox{such that}\,\, P_q(v) = c, \,\, \mbox{one has that}\,\, O(v) \subset P_q^{-1}(c)\cap S^{N-1}.
\ee
We also notice that relations \eqref{e:jott1}--\eqref{eq:orbit1} yield that the mapping $ v\mapsto P_1(v)$ is the restriction to the sphere of a linear application having the form
$$
\R^N \to \R : y\mapsto a\cdot y := \langle a,\, y\rangle,
$$
for some $a\in \R^N$ such that $R^* a = a$ for every $R \in G$; since $\{t\,a : t\in \R \}$ is $L$-invariant, if $L$ is irreducible one must necessarily have $a=0$, and therefore $P_1(v) = 0$ for all $v\in S^{N-1}$. To conclude, we observe that, since \eqref{e:jott1} is in order, one has that, for all $R\in G$,
$$
(R^*v)^T K_2 (R^*v) = v^T K_2v, \,\,\forall v\in S^{N-1} \quad \Longleftrightarrow\quad K_2 R^* =  R^*K_2,
$$
that is, $K_2$ commutes with $L$. Since $K_2$ is symmetric and the $L_\ell$s are pairwise non-equivalent, one can use Schur's Lemma (see \cite[Theorem 6.1.3-(i)]{FarautBook}) to deduce that, for coefficients $\alpha_1,...,\alpha_M \in \R$,
$$
K = \bigoplus_{\ell = 1}^M \, \alpha_\ell\, \mathbb{I}_{N_\ell},
$$
and the conclusion follows at once.}

\subsection{Isotropic and semialgebraic singularities}
{In this subsection, we explain and discuss the assumptions of \cref{c:splendid0}.}
\subsubsection{Jets}
{Recall that a Riemannian manifold $(M,g)$ has an associated Levi-Civita connection $\nabla$, which allows one to define the \emph{covariant derivative} of any tensor, analogously to the standard calculus on $\R^d$; this is a standard framework, for which we refer to the monograph \cite{leeriemann}. Given a smooth function $f\in \mC^\infty(M)$ and an integer $k\in \N$, we define the \emph{$k$-jet of $f$ at $x$} (see \cite[Section 4]{Hirsch}) of $f$ as the tuple:
\be \label{eq:jet}
j^k f (x) := \tyu x, f(x),\nabla_xf,\dots,\nabla_x^{(k)}f \uyt, 
\ee
where $\nabla^{(k)}_xf$ is the $k$-multilinear form on $T_xM$ (which we identify with $T^*_xM$ via the metric) obtained after $k$ covariant differentiations of $f$.}\footnote{{Normally, jets are defined independently of the metric, as equivalence classes of maps, see \cite[Section 4]{Hirsch}. On a Riemannian manifold, i.e., once a metric is fixed, one can use the Levi-Civita connection to have a more direct representation such as that of \cref{eq:jet}.
}}
{The natural intuitive interpretation of jets is that $j^kf(x)$ is a coordinate-free version of the Taylor polynomial of $f$ at $x$ and indeed the space of all $k$-jets at $x\in M$, denoted $J^k_x(M)$ (see \cite[Section 4]{Hirsch}) is isomorphic to the space of degree $k$ polynomial functions on $T_xM$. So, $j^kf(x)\in J^k_x(M)$ and we call the function $x\mapsto j^kf(x)$ the \emph{$k$-jet prolongation of $f$}; this is a section of the vector bundle over $M$ having $J^k_x(M)$ as fiber over $x$, denoted as $J^k(M)$ and called the \emph{space of $k$-jets (from $M$ to $\R$)}. Such notation extends, with obvious generalization, to $j^kf(x)\in J^k(M,\R^a)$ for a vector valued function $f\in \mC^\infty(M,\R^a)$. We refer to \cite[Section 4]{Hirsch} for a detailed treatment.}

{
For instance, the jet prolongation of the Gaussian field $f=\langle \gamma, \y\rangle$, that we consider in \cref{sec:ourmodel}, is given by
\bega\label{eq:jetnablay} 
j^k f (x) = \tyu x, \langle \gamma, \y(x) \rangle, \langle \gamma, \nabla_x\y \rangle,\dots, \langle \gamma, \nabla_x^{(k)}\y \rangle \uyt,
\eega
and defines a Gaussian random section of $J^k(M)$.}
{\begin{remark}
We will never really need to manipulate directly formulas such as that in \cref{eq:jetnablay} in this paper, because our argument for the cancellation relies on the abstract general statement of \cref{thm:Main1}. Instead, we need the language of jets only to identify precisely the class of functionals falling into the hypotheses of \cref{c:splendid0}.
\end{remark}}
{\subsubsection{On the importance of jets}
The language of jets is very useful to describe a broad class of functionals of a geometric nature.  In fact, all of those that are typically studied in the context of Gaussian random fields, including all those considered in this paper, fall under the class of type-$W$ singularities, introduced in \cite{2022BreKeneLer, witdoash2021LerarioStecconi} and defined in terms of jets. In its most general version, a \emph{type-$W$ singularity of $f$} is just the preimage of a subset $W\subset J^k(M)$ under the $k$-jet prolongation of $f$; we will use the notation:
\be \label{eq:typeW}
Z_W(f):=(j^kf)^{-1}(W).
\ee 
Below, we show that excursion sets, level sets, and critical points admit such a description. This fact is straightforward and has been already exploited in many contexts, including that of Gaussian random fields, see \cite{KR2022Stecconi,mttps2024LerarioStecconi, gts2025LerarioMarinucciRossiStecconi, lerario2021probabilistic}. Having such a unifying abstract description has a significant methodological advantage: it is often the case that the same proof applies to a wide variety of functionals, at the same time. This is exactly the case of \cref{c:splendid1}. 
Here is a list of prominent examples. The first two show how \cref{c:splendid0} implies \cref{c:splendid1}, see also \cite[5.1 Examples]{gts2025LerarioMarinucciRossiStecconi}.}
{
\begin{example}\label{ex:typeW}
\begin{enumerate}[(i.)]
    \item \emph{Excursion set and level sets}. Taking $k=0$ we have that $J^0(M)=M\times \R$, so that
    \be 
    \{x\in M: f(x)\ge u\}= Z_{M\times [u,+\infty)}(f), 
    \quad \text{and}\quad  
    \{x\in M: f(x)= u\}= Z_{M\times \{u\}}(f). 
    \ee
    \item \emph{Critical points}. Taking $k=1$, we have that $J^1(M)=TM\times \R$, so that 
    \be 
\text{Crit}(f):=\kop x \in M : \nabla_x f=0\pok=Z_{0_{TM}\times \R}(f),
    \ee
    where $0_{TM}=\{(x,v)\in TM: x\in M, v=0\}\cong M$ denotes the zero section of $TM$.
    \item \emph{Tangencies}. Let $N\subset M $ be a smooth submanifold of $M$. Then 
    \be 
\{x\in M: Z_{\{u\}}(f) \text{ is tangent to } N \text{ at $x$}\}= \{x\in M\colon \nabla_x f \subset T_xN\}
    \footnote{Here we are using the word \emph{tangent} as a synonymous of \emph{non-transverse}.}
    \ee
    is clearly a set of the form $Z_{W(N)}(f)$ for some $W(N)\subset J^1(M)$, determined by $N$. 
    \item \emph{Inflection points}. There are many possible example of singularities extending the notion of the \emph{flex of a curve}, or inflection point. These are characterized by some degeneration condition on the curvature of, for instance, $Z_{\{u\}}(f)$, hence, by some $W\subset J^2(M)$. We give the example of \emph{minimal points}, that is, points of $Z=Z_{\{u\}}(f)$ where the trace of the second fundamental form of $Z$ is zero, i.e., where the mean curvature vanishes. Then,
    \be 
\{x\in M: x \text{ is a minimal point of } Z_{\{u\}}(f)\}=Z_{W_{\text{mean}=0}}(f),
    \ee
    for some $W_{\text{mean}=0}\subset J^2(M)$. Indeed, the formula computing the mean curvature $H_x$ of $Z_{\{u\}}(f)$ at $x$ (see \cite{leeriemann}):
    \be 
H_x=\frac{
\mathrm{Hess}_x f \tyu \nabla_x f, \nabla_x f\uyt
}
{\|\nabla_x f\|^{3}}
-\frac{
\Delta f(x)
}{
\|\nabla_x f\|
},
\ee
is a semialgebraic expression involving derivatives of $f$ up to the second order. Note that such a formula is built in the definition of the set $W_{\text{mean}=0}$.
    \item \emph{Morse bootstrap trick.}
    Let $h\in \mC^\infty(M)$, then the set
    \be 
\text{Crit}\tyu 
h|_{Z_{\{u\}(f)}}
\uyt
=\kop x\in M: f(x)=u,\ \nabla_xh \wedge \nabla_xf=0\pok
    \ee
    can either be interpreted as a $Z_{W(h)}(f)$, for some $W(h)\subset J^1(M)$, but also as a $Z_{\{u\}'}(h,f)$, for some $\{u\}'\subset J^1(M,\R^2)$ (the space of $k$-jets from $M$ to $\R^2$). In the second case, $W=\{u\}'$ does not depend on $f$ nor $h$. This kind of singularity is very useful in combination with Morse theory, which entails that, if $Z=Z_{\{u\}}(f)$ is a submanifold and $h|_Z$ is a Morse function, then by Morse inequality, we have
    \be 
\sum_{i=1}^d b_i\tyu Z_{\{u\}}(f)\uyt\le \# \tyu Z_{\{u\}'}(h,f) \uyt
    \ee
This idea can be pushed substantially further, see \cite[Theorem 2.3]{mttps2024LerarioStecconi}, which extends the Morse inequalities to the following statement (here reported informally): for any $W\subset J^k(M,\R^a)$ there exists $W'\subset J^{k+1}(M,\R^{a+1})$ such that the Betti numbers of $Z_W(f)$--a subtle non-local topological quantity--are bounded by the cardinality of $Z_{W'}(h,f)$, for \emph{almost every} choice of auxiliary function $h$. This method was used in \cite{GaWe3,mttps2024LerarioStecconi,gayet_2024_ALG} for bounding the expectation of Betti numbers of Kostlan polynomials. 
\item \emph{Lipschitz-Killing bootstrap trick}. 
We refer to \cite{AdlerTaylor} for a complete treatment of Lipschitz-Killing curvatures and just recall that given a regular enough subset $Z\subset M$ of dimension $n$, its Lipschitz-Killing curvature are real numbers $\m L_0(Z),\m L_1(Z),\dots, \m L_n(Z)$, with $\m L_n(Z)=\vol{n}(Z)$ defined by integrating certain polynomial functions of the (intrinsic and extrinsic) curvature of $Z$. Using a strategy similar to that adopted for Betti numbers, one can reduce the study of these quantities for a given $W\subset J^k(M)$ from submanifolds to point processes, via a formula of the form:
\be 
\m L_i\tyu Z_W(f)\uyt= \E \sum_{x\in Z_{W_i}(h,f)}\a_i\tyu j^{k+1}_x(h,f)\uyt,
\ee
where $W_i\subset J^{k+1}(M,\R^{1+a_i})$, $\a_i\colon W_j\to \R$, and a random field $h\randin \mC^\infty(M,\R^{a_j})$ are suitably defined. Such a methodology have been rigorously proved and applied in \cite{gts2025LerarioMarinucciRossiStecconi}, in the case $d=2$, moreover this idea is at the core the proof of \cite[Theorem 13.3.1-2]{AdlerTaylor}, which constitutes one of the technical pillars of the whole celebre book \cite{AdlerTaylor}. 
\end{enumerate}
\end{example}
}
{\subsubsection{Isotropic singularities}
Let us now restrict to the case $M=S^d$. 
\begin{definition}[Isotropic singularity]\label{def:isosing}
We say that a subset $\Sigma \subset J^k(S^d,\R^a)$ is \emph{isotropic} if for any $R\in SO(d+1)$, we have that 
\be\label{eq:isotropR} 
j^kf (Rx)\in \Sigma \iff j^k(f\circ R)(x)\in \Sigma,
\ee
for all $x\in S^d$. In this case, we say that $Z_\Sigma(f)=j^kf^{-1}(\Sigma)$ is an \emph{isotropic singularity}.
\end{definition}}
This notion is a weaker analogue of that of \emph{intrinsic singularity}, introduced in \cite[Definition 2.6]{mttps2024LerarioStecconi}. It is immediate to see from such definition, that any intrinsic singularity is also isotropic\footnote{\cite[Definition 2.6]{mttps2024LerarioStecconi} says that a subset $\Sigma \subset J^k(M)$ is an intrinsic if it satisfies a relation analogous to \cref{eq:isotropR} for any diffeomorphism $R$.}, hence, the statement of \cref{c:splendid0} is stronger than if $\Sigma$ were assumed to be intrinsic.
The spirit of such a concept is that the vast majority of type-$W$ singularity (see \cref{eq:typeW}) arising ``naturally'' in differential geometry turns out to be intrinsic and therefore isotropic. 


\subsubsection{Semialgebraic singularities}
A subset $A\subset \R^n $ is said to be \emph{semialgebraic} if it can be expressed as finite unions and intersections of polynomial inequalities.
\begin{definition}\label{def:semialg}
A \emph{semialgebraic} set $A\subset \R^n$ is a set of the form
\be \label{eq:semialgdef}
A=\bigcup_{i=1}^b\bigcap_{j=1}^a \kop x\in \R^a\ \big|\ 
\text{sign} \tyu p_{ij}(x)\uyt =\e_{ij}
\pok,
\ee
where $a,b\in\N$ are finite,  $\e_{ij}\in\kop -1,0,1\pok$ and $p_{ij}$ are real polynomials in $n$ variables.
\end{definition}
{The theory of semialgebraic sets is a very rich and well developed classical topic, with many deep results, for which we refer to \cite{RAG}. In particular, two pillars of such a theory are the Tarsky-Seidenberg theorem (\cite[Theorem 1.4.2]{RAG}), which makes it very easy to recognize when a set if semialgebraic, and \cite[Theorem 9.7.11]{RAG}, establishing that any semialgebraic set admits a Whitney stratification. The role of semialgebraic geometry in our paper is subtle, but in full analogy with past works e.g. \cite{mttps2024LerarioStecconi,lerario2021probabilistic,gts2025LerarioMarinucciRossiStecconi,2022BreKeneLer}. For most ``natural'' type-$W$ singularities, the defining subset $W\subset J^k(M)$ is semialgebraic. For instance, all the examples given in \cref{ex:typeW} are semialgebraic\footnote{With the exception of $W(h)$, which is semialgebraic if $h$ is a semialgebraic function.}. This simple observation (made such by the Tarsky-Seidenberg theorem) allows to exploit the vast plethora of deep results from semialgebraic geometry, that could otherwise pass unnoticed. In the following we will rely on the fact that if $Z\subset S^d$ is a semialgebraic set of degree $d$, then its Lipschitz-Killing curvatures and Betti numbers are well defined (because $Z$ is Whitney stratified) and they are bounded.
}

{
Clearly, the sphere $S^d$ is semialgebraic subset of $\R^{d+1}$ and $J^k(S^d,\R^a)$ is also semialgebraic, in the following sense. The space $J^k(\R^{d+1},\R^a)$ is canonically isomorphic to the vector space 
\be\label{eq:V} 
V=\R^{d+1}\times \R[v_0,\dots,v_d]^a_{\le k},
\ee 
where  $\R[v_0,\dots,v_d]^a_{\le d}$ is the space of $a$-tuples of polynomials of degree at most $k$ in $d+1$ variables. 
The space of jets $J^k(S^d,\R^a)$ can be canonically identified as the subset of $J^k(\R^{d+1},\R^a)$, consisting of all pairs $j^kF(x)=(x,P)$ of where $x\in S^d$ and $P$ is the Taylor polynomial at $x$ of a $0$-homogeneous function $F$ (as in \cref{d:0hom}), i.e., satisfying $P(v)=P(v-xx^Tv)\ \forall v\in \R^{d+1}$. Combining these two identifications, we obtain that $J^k(S^d,\R^a)$ is a semialgebraic subset of the vector space $V$. 
\begin{definition}[Semialgebraic singularity]\label{def:semialgsing}
We say that a subset $\Sigma\subset J^k(S^d,\R^a)$ is semialgebraic, if it is semialgebraic in $V$ (see \cref{eq:V}). In such case, if moreover $f\in \mC^\infty(S^d)$ is the restriction of a polynomial in $\R^{d+1}$, we say that $Z_\Sigma(f)=j^kf^{-1}(\Sigma)$ is a \emph{semialgebraic singularity}.
\end{definition}
}
{\begin{proposition}\label{prop:semiall}
Let $M=S^d$ and let $I\subset \R$ be an interval. If $\Sigma$ is one among the sets $S^d\times I, 0_{TM}\times I,  W_{\text{mean=0}}, \{u\}' $ defined in \cref{ex:typeW}, then it is isotropic and semialgebraic.
\end{proposition}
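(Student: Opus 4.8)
The plan is to verify the two defining properties --- isotropy (\cref{def:isosing}) and semialgebraicity (\cref{def:semialgsing}) --- separately, treating the four candidate sets by a common mechanism. For isotropy the guiding principle is the covariance of the $k$-jet prolongation under isometries of the round sphere: every $R\in SO(d+1)$ is an isometry, so $dR_x\colon T_xS^d\to T_{Rx}S^d$ is a linear isometry, isometries commute with the Levi--Civita connection, and hence $j^k(f\circ R)(x)$ is the image of $j^kf(Rx)$ under the induced bundle isomorphism $R^\sharp$ of $J^k(S^d,\R^a)$, which fixes the value coordinate and acts on the covariant-derivative coordinates by the isometric pullbacks $(dR_x)^*$, $(dR_x)^*\otimes (dR_x)^*$, and so on. Since, as $f$ ranges over all smooth functions, $j^kf(Rx)$ ranges over the whole fiber, $\Sigma$ is isotropic if and only if it is $R^\sharp$-invariant for every $R$; and this is immediate once one observes that each defining condition is built only from the value of $f$, its covariant derivatives, and the metric (through traces and contractions). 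In fact, for $S^d\times I$, $0_{TM}\times I$ and $\{u\}'$ --- whose conditions involve only the value and the first derivative --- the stronger property of being an \emph{intrinsic} singularity holds, so isotropy follows from the observation after \cref{def:isosing}; only $W_{\text{mean=0}}$ genuinely requires the isometry-invariance (rather than mere diffeomorphism-naturality) of the mean curvature.

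Concretely, for the isotropy check I would argue case by case from the formulas in \cref{ex:typeW}. For $\Sigma=S^d\times I$ the condition on $j^0f(x)$ is $f(x)\in I$, which $R^\sharp$ leaves untouched, so $j^0f(Rx)\in\Sigma\iff j^0(f\circ R)(x)\in\Sigma$. For $\Sigma=0_{TM}\times I$ the condition is $\nabla_xf=0$ together with $f(x)\in I$: as $R^\sharp$ sends the gradient to its isometric pullback, its vanishing is preserved, and the value part is as before. For $\Sigma=W_{\text{mean=0}}$ the condition is $f(x)=u$ together with $H_x=0$, where $H_x$ is the mean curvature of $Z_{\{u\}}(f)$ given by the displayed contraction of $\mathrm{Hess}_xf$, $\nabla_xf$ and $\Delta f(x)$; since $\mathrm{Hess}$, $\nabla$, $\|\cdot\|$ and $\Delta$ are isometry-covariant, $H_x$ transforms as a scalar under $R^\sharp$, so $H_x=0$ is preserved. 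For $\Sigma=\{u\}'\subset J^1(S^d,\R^2)$ the condition on $j^1(h,f)(x)$ is $f(x)=u$ together with $\nabla_xh\wedge\nabla_xf=0$; applying the isometric pullback to both covectors preserves their linear dependence (the pullback being an algebra homomorphism on the exterior algebra), and the value part is immediate. In every case the biconditional in \cref{def:isosing} follows.

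For semialgebraicity I would use the canonical embedding $J^k(S^d,\R^a)\subset V=\R^{d+1}\times\R[v_0,\dots,v_d]^a_{\le k}$ recalled before \cref{def:semialgsing}, under which $J^k(S^d,\R^a)$ is itself semialgebraic: in the identification $j^kf(x)\leftrightarrow (x,P)$, the value $f(x)$, the components of $\nabla_xf$ and $\mathrm{Hess}_xf$, and the scalar $\Delta f(x)$ are all polynomial in the coordinates of $x$ and the coefficients of $P$ (evaluation of $P$ and of its derivatives at $x$, followed by the linear projection onto $x^\perp$). Hence $S^d\times I$ is cut out by the interval condition $f(x)\in I$, i.e. by at most two polynomial inequalities; $0_{TM}\times I$ adds the polynomial equations $\nabla_xf=0$; $W_{\text{mean=0}}$ is cut out by $f(x)=u$, by $\|\nabla_xf\|^2>0$, and by the polynomial equation obtained from $H_x=0$ after clearing the denominator $\|\nabla_xf\|^3$; and $\{u\}'$ is cut out by $f(x)=u$ together with the vanishing of the $2\times 2$ minors expressing $\nabla_xh\wedge\nabla_xf=0$. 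Intersecting these semialgebraic subsets of $V$ with $J^k(S^d,\R^a)$ and invoking the Tarski--Seidenberg theorem \cite[Theorem 1.4.2]{RAG} gives the claim, consistently with the remark in \cref{ex:typeW} that the listed singularities are semialgebraic. I expect the only step needing real care to be $W_{\text{mean=0}}$: one must pin down the domain of the mean-curvature condition (the regular locus $\{\nabla_xf\neq 0\}$ of the level set, so that clearing $\|\nabla_xf\|^3$ introduces no spurious component) and invoke the classical isometry-invariance of the mean curvature (see e.g. \cite{leeriemann}) --- precisely the point where ``isotropic'' cannot be upgraded to ``intrinsic''. The other verifications are routine once the covariance of the jet prolongation under $SO(d+1)$ and the embedding into $V$ are in place.
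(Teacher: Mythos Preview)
Your proposal is correct and follows essentially the same approach as the paper: a case-by-case verification of isotropy (via covariance of the jet prolongation under isometries, reducing to the chain rule and the isometry-invariance of the mean curvature) and of semialgebraicity (via the embedding $J^k(S^d,\R^a)\subset V$ and explicit polynomial descriptions of each condition). The paper's proof is considerably terser and does not single out the intrinsic-versus-isotropic distinction as explicitly as you do, but the underlying arguments are the same.
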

\begin{proof}
We explain in detail the first two cases and only sketch the others.
 \begin{enumerate}[(i.)]
    \item Excursion sets and level sets and more generally, all type-$\Sigma$ singularities of degree $k=0$ of the form $\Sigma=S^d\times I$, are obviously isotropic, because $j^0f(x)=(x,f(x))$. Indeed, in this case \cref{eq:isotropR} becomes: $f(Rx)\in I\iff f\circ R(x)\in I$. Since intervals are semialgebraic, we conclude.
    \item In the case of critical points, the set $\Sigma=0_{TM}\times \R\subset J^1(M)$ is isotropic because of the chain rule: $\nabla (f\circ R)(x)=R^{T}\nabla f(Rx)$. In such case, \cref{eq:isotropR} becomes: $\nabla f(Rx)=0 \iff \nabla (f\circ R)(x)=0$, thus it holds simply because $R$ is a diffeomorphism. Since $\Sigma$ is a linear subspace of $V$, it is semialgebraic.
    \item For the other cases, the fact that the mean curvature is invariant under ambient isometries and the chain rule imply that $\Sigma$ is isotropic. The semialgebraicity of $W_{\text{mean=0}}$ is due to the fact that the formula for the mean curvature is a semialgebraic function of $j^kf(x)$. Finally, one can deduce the semialgebraicity of $W=\{u\}'$, o by observing that the subset of $\R^{2d}$ consisting of linearly dependent pairs of vectors in $\R^d$ is semialgebraic.
\end{enumerate}
\end{proof}}
{One can immediately observe that a semialgebraic singularity $Z\subset S^d$, in the sense of the above definition, is also a semialgebraic subset of the sphere. Among other things, such a property ensures that $Z$ admits a Whitney stratification, and therefore that its Lipschitz-Killing curvatures (sometimes called \emph{intrinsic volumes})
$
\{\m L_j(Z)\colon j=0,\dots,d-1\}
$
are well defined, see \cite[Definition 10.7.2]{AdlerTaylor}. We refer to \cite{AdlerTaylor} for a complete treatment of such a topic. Furthermore, we will see below that the fact that $Z$ is semialgebraic ensures that its Lipschitz-Killing curvatures are controlled by a constant depending on the degree of the polynomials used in its definition (see \cref{eq:semialgdef}). The same can be said for the Betti numbers of $Z$,
$
\{b_j(Z)\colon j=0,\dots,d-1\}
$, which satisfy Thom-Milnor's bound: a classical result of semialgebraic geometry.
\begin{lemma}\label{lem:finiteness}
    Let $\Sigma\subset J^k(S^d,\R^a)$ be closed and semialgebraic and let $f\in \mC^\infty(S^d,\R^a)$ be the restriction of a polynomial of degree $\ell$ in $\R^{d+1}$. Then, there exists a constant $C>0$ such that
    \be 
\max_{j=0,\dots,d} \max\kop \m L_j\tyu Z_\Sigma(f)\uyt, b_j\tyu Z_\Sigma(f)\uyt \pok \le C\ell^{d+1},
    \ee
    where the constant $C=C(\Sigma)$ depends only on $\Sigma$ (thus, implicitely, also on $k,d,a$) and not on the polynomial $f$.
\end{lemma}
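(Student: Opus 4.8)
The plan is to realize $Z_\Sigma(f)$ as a compact semialgebraic subset of $\R^{d+1}$ whose algebraic complexity is controlled linearly by $\ell$, and then to bound both its Betti numbers and its Lipschitz--Killing curvatures by a single application of the classical Thom--Milnor inequality --- directly for the former, and, for the latter, after a Crofton-type reduction to Euler characteristics of lower-dimensional slices. For the complexity bound: since $f$ is the restriction to $S^d$ of a polynomial of degree $\ell$ on $\R^{d+1}$, all of its covariant derivatives on $S^d$ --- equivalently, the coefficients of the Taylor polynomials defining $j^kf(x)$ --- are polynomial functions of $x\in\R^{d+1}$ of degree $O_{k,d}(\ell)$, because the Levi--Civita connection, the second fundamental form, and the tangential projector $I-xx^T$ of the round sphere are polynomial in the ambient coordinates. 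Hence $x\mapsto j^kf(x)$ is a polynomial map from $\R^{d+1}$ into the vector space $V$ of \eqref{eq:V}, of degree $O_{k,d}(\ell)$; since $\Sigma$ is semialgebraic in $V$, the Tarski--Seidenberg theorem (\cite[Theorem~1.4.2]{RAG}) gives that $Z_\Sigma(f)=(j^kf)^{-1}(\Sigma)$ is semialgebraic, and composing the fixed description of $\Sigma$ with $j^kf$ shows that $Z_\Sigma(f)$ can be written, inside $\R^{d+1}$, as a Boolean combination of at most $s(\Sigma)$ polynomial sign conditions of degree at most $D(\Sigma)\,\ell$, together with the equation $\|x\|^2=1$; here $s(\Sigma)$ and $D(\Sigma)$ depend only on $\Sigma$ (hence, implicitly, only on $k,d,a$). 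As $S^d$ is compact and $\Sigma$ is closed, $Z_\Sigma(f)$ is a compact semialgebraic subset of the closed unit ball of $\R^{d+1}$.

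By the Thom--Milnor bound (see e.g. \cite{RAG}), a semialgebraic subset of $\R^{d+1}$ defined by $s$ polynomials of degree at most $D$ has total Betti number $\sum_j b_j\le (O(sD))^{d+1}$; with $s=s(\Sigma)$ fixed and $D=D(\Sigma)\,\ell$, this gives $\sum_j b_j\tyu Z_\Sigma(f)\uyt\le C(\Sigma)\,\ell^{d+1}$, whence $b_j\tyu Z_\Sigma(f)\uyt\le C(\Sigma)\,\ell^{d+1}$ for every $j$. For the Lipschitz--Killing curvatures, since $Z_\Sigma(f)$ is compact and, being semialgebraic, Whitney-stratifiable, the $\m L_j$ are well defined and obey the Crofton formula from the integral geometry of tame sets (\cite[Chapter~13]{AdlerTaylor}): for each $j$ there is a universal constant $\kappa_{d,j}$ with
\be
\m L_j\tyu Z_\Sigma(f)\uyt=\kappa_{d,j}\int\chi\tyu Z_\Sigma(f)\cap E\uyt\,d\mu(E),
\ee
where the integral runs over the affine subspaces $E\subset\R^{d+1}$ of codimension $j$ that meet the closed unit ball, $\mu$ being the invariant measure, whose total mass on this set of planes is a finite constant depending only on $d$ and $j$. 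Every slice $Z_\Sigma(f)\cap E$ is semialgebraic of degree at most $D(\Sigma)\,\ell$ inside the affine space $E$ of dimension $\le d+1$, so by the same Thom--Milnor bound $\lvert\chi\tyu Z_\Sigma(f)\cap E\uyt\rvert\le\sum_j b_j\tyu Z_\Sigma(f)\cap E\uyt\le C(\Sigma)\,\ell^{d+1}$; integrating against $\mu$ yields $\lvert\m L_j\tyu Z_\Sigma(f)\uyt\rvert\le C(\Sigma)\,\ell^{d+1}$. Taking the maximum over $j=0,\dots,d$ and over the two families $\{\m L_j\}$ and $\{b_j\}$ completes the proof.

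The points requiring care are all of a bookkeeping nature: (i) checking that the coordinates of the $k$-jet prolongation on $S^d$ are genuinely polynomial of degree $O(\ell)$ --- which hinges only on the polynomiality of the round sphere's intrinsic geometry in the ambient coordinates; and (ii) invoking the Crofton formula in the right normalization, so that $\m L_j$ is recovered from Euler characteristics of slices of controlled algebraic complexity. The structural feature that makes the constant $C=C(\Sigma)$ independent of the polynomial $f$ is precisely that $s(\Sigma)$, $D(\Sigma)/\ell$, and the ambient dimension $d+1$ do not depend on $f$, all of the $f$-dependence being funneled into the single factor $\ell$. An alternative to the Crofton step, along the lines of the ``Lipschitz--Killing bootstrap trick'' recalled in \cref{ex:typeW}, would express $\m L_j\tyu Z_\Sigma(f)\uyt$ as a weighted count of a finite semialgebraic point set attached to a generic polynomial auxiliary field $h$, whose cardinality is again $O_\Sigma(\ell^{d+1})$ by Thom--Milnor; bounding the Crofton integrand is, however, simpler than controlling the corresponding weights on the relevant compact region, which is why we favor the route above.
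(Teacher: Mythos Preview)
Your proof is correct and follows essentially the same route as the paper: bound the algebraic degree of the semialgebraic description of $Z_\Sigma(f)$ by $O_\Sigma(\ell)$, apply Thom--Milnor for the Betti numbers, and then use a Crofton/Hadwiger formula (the paper cites \cite[Equation~(6.3.11)]{AdlerTaylor}) to reduce the Lipschitz--Killing curvatures to Euler characteristics of affine slices, each of which is again controlled by Thom--Milnor. Your treatment of why $x\mapsto j^kf(x)$ is polynomial of degree $O_{k,d}(\ell)$ is in fact more careful than the paper's, which simply asserts it.
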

\begin{proof}
The semialgebraic subset $Z= Z_\Sigma(f)\subset S^d$ is defined by an expression as that of \cref{eq:semialgdef}, where all the polynomials $p_{ij}$ are obtained by composing those defining $\Sigma$, with $j^kf$. The latter is still the restriction of a polynomial of degree $\ell$. Therefore, all the polynomials $p_{ij}$ have degree bounded by $C\ell$, where $C$ is a constant depending on $\Sigma$. By including in such collection, the degree two equation: $|x|^2=1$ defining the sphere, we deduce that $Z$ is a subset of $\R^{d+1}$, defined by a finite number of closed polynomial inequalities of degree at most $C\ell$.
An application of Thom-Milnor Bound (precisely, we use \cite[Theorem 3]{MilnorBound}) yields the following inequality:
\be\label{eq:ThomMilnor} 
\sum_{i=0}^{d}b_i(Z)\le c(\Sigma)\ell^{d+1},
\ee
where $c(\Sigma)>0$ is a positive constant depending on $\Sigma$. Now let $j\in \N$ and let us consider an affine function $A\colon S^d\to \R^j$, that is, a polynomial of degree $1$, and observe that 
\be 
Z_\Sigma(f)\cap A^{-1}(0)= Z_{\Sigma\oplus \{0_j\}}(f,L),
\ee
where $\Sigma\oplus \{0_j\}:=\{ j^k(F,G)(x)\in J^k(S^d,\R^{a+j}) :\ j^kF(x)\in \Sigma,\ G(x)=0\}$ is semialgebraic.
Therefore, applying \cref{eq:ThomMilnor} to the function $(f,L)\in \mC^{\infty}(S^d,\R^{a+j})$, which is the restriction of a polynomial of degree $\ell$, we get the following bound for the Euler-Poincaré characteristic $\chi$ of any affine section of $Z=Z|_\Sigma(f)$:
\be 
\chi\tyu Z\cap K \uyt = \sum_{i=0}^{d}(-1)^ib_i(Z\cap K)\le \sum_{i=0}^{d}b_i(Z\cap K)\le c(\Sigma\oplus \{0_b\})\ell^{d+1},
\ee
for any affine subspace $K=\ker L\subset \R^{d+1}$ of codimension $j$. We conclude combining the latter equation with Hadwiger's formula (see \cite[Equation (6.3.11)]{AdlerTaylor}):
\be 
\m L_j(Z)=\binom{d}{j-1}\frac{s_{d}}{s_{j-1} s_{d-j}}\int_{\R^{d+1}}\int_{O(d+1)} \chi\tyu Z\cap \tyu u+gK_{j} \uyt\uyt \dd g \dd u.
\ee
Here, $\dd g$ denotes the Haar probability measure of $O(d+1)$ and $K_j=\R^{d+1-j}\times \{0\}$. 
\end{proof}
}
\subsection{Proofs of \cref{c:splendid0} and \cref{c:splendid1}}
{We will prove something slightly more general than \cref{c:splendid0}. Let us consider a functional defined as follows
\be \label{eq:FeiZFunctionals}
E(\gamma)=F\tyu e_1\tyu Z_{\Sigma_1}(\tilde{f})\uyt,\dots, e_r\tyu Z_{\Sigma_r}(\tilde{f})\uyt\uyt,
\ee 
with $F\colon\R^r\to\R$ a locally bounded measurable function, $\Sigma_i\subset J^k(M)$ closed, isotropic and semialgebraic subsets. and $e_i\in \{\m L_0,\dots,\m L_{d-1}\}\cup \{ b_0,\dots,b_{d-1}\}$.
\begin{lemma}\label{lem:FeizFunctionals}
    $E(\gamma)$ defined as in \cref{eq:FeiZFunctionals} is in $L^2$ and $E(\gamma)[1]=E(\gamma)[2]=0$.
\end{lemma}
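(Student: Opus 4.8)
The plan is to realize $E(\gamma)$, viewed as a function of the Gaussian coefficient vector $\gamma\in\R^{N_\ell}$, as an instance of the situation covered by \cref{c:ofmain1}, applied to the representation $L\colon SO(d+1)\to SO(N_\ell)$ from \eqref{eq:reprSOdN}. Concretely, I would verify the three hypotheses of \cref{c:ofmain1}: that $E(\gamma)$ is square-integrable, that $E$ is positively $0$-homogeneous in the sense of \cref{d:0hom}, and that $E$ is $L$-invariant. Because \cref{prop:Lirrep} guarantees that $L$ is irreducible, \cref{c:ofmain1} then yields $E(\gamma)[1]=E(\gamma)[2]=0$. (Only positive $0$-homogeneity is genuinely used: this is all that enters the proof of \cref{thm:Main1} via \cref{t:nonmale}, and in the case $M=1$ the relation $k_1N_\ell=0$ forces $E(\gamma)[2]=0$, while the same argument gives $P_1\equiv 0$, i.e.\ $E(\gamma)[1]=0$.)

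\emph{Integrability.} Since $\lebf=\langle\gamma/\|\gamma\|,Y\rangle$ with $Y=Y_{\ell,\bullet}$ a vector of degree-$\ell$ spherical harmonics, the field $\lebf$ is, almost surely, the restriction to $S^d$ of a polynomial of degree $\ell$ on $\R^{d+1}$. Since each $\Sigma_i$ is closed and semialgebraic, \cref{lem:finiteness} provides a deterministic (i.e.\ $\gamma$-independent) constant $C_i=C_i(\Sigma_i)$ bounding $|e_i(Z_{\Sigma_i}(\lebf))|$ almost surely, so the random vector $\big(e_1(Z_{\Sigma_1}(\lebf)),\dots,e_r(Z_{\Sigma_r}(\lebf))\big)$ takes values a.s.\ in a fixed compact box $B\subset\R^r$. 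As $F$ is locally bounded, $\sup_B|F|<\infty$, whence $E(\gamma)$ is an a.s.\ bounded, hence square-integrable, random variable. (Measurability is routine: the $k$-jet prolongation depends continuously on the coefficients of $\lebf$, and the Lipschitz--Killing curvatures and Betti numbers of semialgebraic sets depend measurably on the defining data.)

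\emph{Homogeneity and $L$-invariance.} Writing $\lebf_v:=\langle v/\|v\|,Y\rangle$ for the uniform field attached to a coefficient vector $v\neq 0$, one has $\lebf_{t\gamma}=\lebf_\gamma$ for every $t>0$, hence $Z_{\Sigma_i}(\lebf_{t\gamma})=Z_{\Sigma_i}(\lebf_\gamma)$ and $E(t\gamma)=E(\gamma)$: thus $E$ is $0$-homogeneous in the sense of \cref{d:0hom}. For $L$-invariance, fix $R\in SO(d+1)$ and put $R^*=L(R)$; using $Y(Rx)=R^*Y(x)$ and the orthogonality of $R^*$,
\[
\lebf_{R^*\gamma}(x)=\Big\langle \frac{R^*\gamma}{\|\gamma\|},\,Y(x)\Big\rangle=\Big\langle \frac{\gamma}{\|\gamma\|},\,Y(R^{-1}x)\Big\rangle=(\lebf_\gamma\circ R^{-1})(x).
\]
By the isotropy of $\Sigma_i$ (\cref{def:isosing}) this yields $Z_{\Sigma_i}(\lebf_\gamma\circ R^{-1})=R\big(Z_{\Sigma_i}(\lebf_\gamma)\big)$, and since each $e_i$ is a Lipschitz--Killing curvature or a Betti number---both invariant under the ambient isometry $R|_{S^d}$---we get $e_i(Z_{\Sigma_i}(\lebf_{R^*\gamma}))=e_i(Z_{\Sigma_i}(\lebf_\gamma))$ for every $i$, hence $E(R^*\gamma)=E(\gamma)$. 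Invoking \cref{c:ofmain1} then completes the argument.

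\emph{Main obstacle.} No single step is genuinely hard, the substance being already packaged in \cref{c:ofmain1}, \cref{lem:finiteness} and \cref{prop:Lirrep}; what remains is careful bookkeeping. The most delicate point is handling the rotation action with the correct direction of composition ($\lebf_{R^*\gamma}=\lebf_\gamma\circ R^{-1}$) and then applying the isotropy of $\Sigma_i$ to the appropriate argument before using the isometry-invariance of $e_i$. A secondary subtlety is making sure that the Lipschitz--Killing curvatures of the possibly singular---but semialgebraic---sets $Z_{\Sigma_i}(\lebf)$ are well defined and uniformly bounded, which is exactly what the semialgebraicity hypothesis delivers, via the Whitney stratification and the Thom--Milnor bound underlying \cref{lem:finiteness}.
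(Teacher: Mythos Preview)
Your proof is correct and follows essentially the same route as the paper: boundedness via \cref{lem:finiteness}, $0$-homogeneity from the definition of $\lebf$, $L$-invariance from isotropy of the $\Sigma_i$ together with isometry-invariance of the $e_i$, and then an appeal to the irreducible case of \cref{thm:Main1}. The only cosmetic difference is that you invoke \cref{c:ofmain1} (noting that positive homogeneity suffices), whereas the paper cites \cref{thm:Main1} directly; your handling of the rotation direction ($\lebf_{R^*\gamma}=\lebf_\gamma\circ R^{-1}$) matches the paper's $\lebf\circ R^T$.
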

\begin{proof}
Due to semialgebraicity, \cref{lem:finiteness} ensures that $e_i( Z_{\Sigma_i}(\tilde{f}))$ is bounded, therefore $E(\gamma)$ is bounded, hence in $L^2$. moreover, clearly $E(t\gamma)=E(\gamma)$ for all $t>0$. Since $\Sigma_i$ is isotropic, by definition, we get that
\bega 
E(R^*\gamma) &=F\tyu\dots, e_i\tyu\kop x\in S^d: j^k(\lebf\circ R^T)(x) \in \Sigma_i \pok\uyt,\dots \uyt
\\
&=F\tyu\dots, e_i\tyu \kop x\in S^d: j^k(\lebf)(R^Tx) \in \Sigma_i \pok\uyt,\dots \uyt
\\
&=F\tyu\dots, e_i\tyu R Z_{\Sigma_i}(\tilde{f})\uyt,\dots \uyt
=E(\gamma);
\eega
where the last equality is due to the fact that $e_i(RZ)=e_i(Z)$ for any isometry $R\colon S^d\to S^d$. We conclude by an application of \cref{thm:Main1}. 
\end{proof}
\begin{proof}[Proof of \cref{c:splendid0}]
Apply the \cref{lem:FeizFunctionals} above, with $r=1$ and $F=\text{id}$. 
\end{proof}
\begin{proof}[Proof of \cref{c:splendid1}]
By \cref{prop:semiall}, we are in the position of applying \cref{c:splendid0} to all functionals $\f(\tilde{f})$ considered in the statement of \cref{c:splendid1}. Therefore, the thesis.
\end{proof}
}
\section{The evaluation of projections: a key idea}
\label{sec:magictrick}

{In the rest of the paper, we will perform more explicit computations of the Wiener chaos expansion of local functionals. In this section we discuss a key idea, that will be the starting point of all such computations, that is \cref{lem:multimagictrick}.
Notice that in \cref{thm:Main2} and \cref{t:wienerchaos}, we have to study functionals of the form 
\be 
\m E (\lebf)=\int_M F(\lebf(x)) dx,
\ee
where $F\colon \R\to \R$. By the linearity properties of Wiener chaos, one obtains the chaos expansion of $\m E(\lebf)$ simply by integrating that of $F(\lebf(x))$ over the whole manifold. This strategy is particularly convenient in our setting (that of \cref{sec:ourmodel}), given that $\lebf(x)$ has the same law for all $x\in M$, hence the coefficients of the decomposition will not depend on $x$. Moreover, if we write $\lebf(x)$ as in \cref{eq:therootofalltricks}, we can observe that 
\be\label{eq:Fleblaw} 
F(\lebf(x))\overset{\mathrm{law}}{=}E(\gamma)=e(\gamma^N,\|(\gamma^1,\dots,\gamma^{N-1})\|),
\ee
where $\gamma \sim \m N(0,\mathbbm{1}_N)$, for an appropriate function $e\colon \R\times [0,+\infty)\to \R$. So, $E(\gamma)$ belongs to the following class of functionals.
\begin{definition}\label{def:A}
Let $k\ge2$ be an integer such that $\gamma=(\gamma_1,\ldots,\gamma_k)$ for $\gamma_i\sim\mathcal N(0,\mathbbm{1}_{N_i})$ Gaussian vectors of finite dimension $N_i$, $i=1,\ldots,k$. 
Let $\mathscr{A}(N_1,\dots,N_k)\subset L^2$ be the family of functionals of the form $F(\gamma) = e(\gamma_1,\|\gamma_2\|,\ldots,\|\gamma_k\|)$, for a function $e:\R^{N_1}\times \R^{k-1} \to\R$ such that $\E[e(\gamma_1,\|\gamma_2\|,\ldots,\|\gamma_k\|)^{2}]<\infty$. Equivalently,
 \begin{multline}\label{eq:A}
        \mathscr{A}(N_1,\dots,N_k) := \{ F(\gamma) \in L^2(\gamma): F(x,D_2y_2,\ldots,D_ky_k) = F(x,y_2,\ldots, y_k) \\ \forall (x,y_2,\ldots,y_k)\in \R^{N_1}\times\R^{N_2}\times\ldots\times \R^{N_k}, \, \forall D_i\in O(N_i), \, i=2,\ldots,k \}.
    \end{multline}
\end{definition}
The main result of this section, \cref{lem:multimagictrick}, is a characterization of the chaotic expansions of functionals belonging to $\mathscr{A}(N_1,\dots,N_k)$. Precisely, we will show that $\mathscr{A}(N_1,\dots,N_k)$ is generated by the following orthonormal family. 
\begin{lemma}\label{lem:SphericalChaos} The following family is orthonormal and is contained in the $q$-th Wiener-chaos space relative to $\gamma\sim \m N(0,\mathbbm{1}_N)$,
    \begin{multline}\label{eq:sphericalChaos}
        \mathscr{H}_q(N_1,\dots,N_k):=\Bigg\{ \frac{
        H_{q_1}(\gamma_1)
        }{
        \sqrt{q_1!}
        }
        \prod_{j=2}^k\left(\int_{S^{N_j-1}} H_{q_j}(\gamma_j^{T}v) dv\right)  \frac{1}{\sqrt{q_j!\beta(N_j,q_j)s_{N_j-1}}}: 
        \\
        q_1\in \N^{N_1}, \, q_j \in 2\N \text{ for } j=2,\ldots,k : |q_1| + q_2 +\ldots + q_k = q\Bigg\};
    \end{multline}
    where $\frac{H_{q_1}(\gamma_1)}{\sqrt{q_1!}}$ is understood in the multi-index notation, see \cref{d:wiener}, and, given integers $N>1$, and $q\ge 1$, we define:
\be\label{eq:beta}
        \beta(N,q)  := \int_{S^{N-1}} |v_1|^q dv
       =s_{N-2}
       B\tyu \frac{q+1}{2},\frac{N-1}{2}\uyt
=\frac{2\pi^{\frac{N-1}2}\Gamma(\frac{q+1}{2})}{\Gamma(\frac{N+q}{2})}.
\ee
\end{lemma}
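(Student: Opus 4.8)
The plan is to prove Lemma~\ref{lem:SphericalChaos} in two stages: first show that each displayed element lies in the $q$-th Wiener chaos $C_q$ of $\gamma$, and then verify orthonormality. Both stages reduce, by the product structure of the family over the blocks $\gamma_1,\gamma_2,\dots,\gamma_k$ and the independence of these blocks, to a statement about a single Gaussian block.

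\textbf{Step 1: membership in the $q$-th chaos.} The factor $H_{q_1}(\gamma_1)/\sqrt{q_1!}$ is, in the multi-index notation of Definition~\ref{d:wiener}, a product of one-dimensional Hermite polynomials of total degree $|q_1|$, hence it lies in $C_{|q_1|}$ (with respect to $\gamma_1$). For each $j\ge 2$, I would observe that the map $v\mapsto H_{q_j}(\gamma_j^T v)$ takes values in $C_{q_j}$ for every fixed $v\in S^{N_j-1}$, since $\gamma_j^T v$ is a standard Gaussian and $H_{q_j}(\gamma_j^T v)=\sum_{(i_1,\dots,i_{q_j})} v_{i_1}\cdots v_{i_{q_j}}\,{:}\gamma_j^{i_1}\cdots\gamma_j^{i_{q_j}}{:}$, exactly the representation recalled in Section~\ref{ss:furthnot}. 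Since $C_{q_j}$ is a closed (finite-dimensional) subspace of $L^2$, the integral $\int_{S^{N_j-1}}H_{q_j}(\gamma_j^T v)\,dv$ is again in $C_{q_j}$ (a Bochner integral of a continuous $C_{q_j}$-valued map over a compact set). By independence of the blocks, the product over $j$ lies in $C_{|q_1|}\otimes C_{q_2}\otimes\cdots\otimes C_{q_k}\subset C_{|q_1|+q_2+\cdots+q_k}=C_q$, using the standard fact that the product of elements of chaoses of independent Gaussians of degrees summing to $q$ sits in $C_q$.

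\textbf{Step 2: orthonormality.} Again by independence of the blocks it suffices to normalize each factor separately and to check that distinct multi-indices give orthogonal elements. For the $\gamma_1$ factor this is the classical relation \eqref{e:orthohermite} in multi-index form. For a fixed $j\ge 2$, I must compute
\[
\E\!\left[\int_{S^{N_j-1}}H_{q_j}(\gamma_j^T v)\,dv\,\int_{S^{N_j-1}}H_{q_j'}(\gamma_j^T w)\,dw\right]
=\int_{S^{N_j-1}}\!\int_{S^{N_j-1}}\E\big[H_{q_j}(\gamma_j^T v)H_{q_j'}(\gamma_j^T w)\big]\,dv\,dw.
\]
Since $\gamma_j^T v$ and $\gamma_j^T w$ are jointly Gaussian, each standard, with covariance $\langle v,w\rangle$, the classical Hermite product formula gives $\E[H_{q_j}(\gamma_j^T v)H_{q_j'}(\gamma_j^T w)]=\delta_{q_j,q_j'}\,q_j!\,\langle v,w\rangle^{q_j}$. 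Hence the off-diagonal ($q_j\ne q_j'$) terms vanish and, using the rotational invariance of the uniform measure on $S^{N_j-1}$ to fix $w=e_1$, the diagonal term equals
\[
q_j!\int_{S^{N_j-1}}\!\int_{S^{N_j-1}}\langle v,w\rangle^{q_j}\,dv\,dw
= q_j!\, s_{N_j-1}\int_{S^{N_j-1}}|v_1|^{q_j}\,dv = q_j!\, s_{N_j-1}\,\beta(N_j,q_j),
\]
which is exactly the square of the normalizing constant in \eqref{eq:sphericalChaos}; the evaluation of $\beta(N,q)$ in \eqref{eq:beta} is the elementary $\int_{S^{N-1}}|v_1|^q\,dv=s_{N-2}\int_{-1}^1|t|^q(1-t^2)^{(N-3)/2}\,dt$, a Beta integral. (When $q_j$ is odd the integrand $\langle v,w\rangle^{q_j}$ is antisymmetric under $v\mapsto -v$, so the integral vanishes; this is why the family is indexed by $q_j\in 2\N$ — the odd-degree spherical integrals are identically zero, which is also the reason the constraint ``$i$ even'' appears in Theorem~\ref{t:wienerchaos}.) Finally, cross terms between different blocks factor as products of expectations, and the $\gamma_1$-block is orthogonal to any pure $\gamma_j$-block by independence and the fact that Wick products of a centered Gaussian block have mean zero unless the degree is $0$; combining these observations gives orthonormality of the whole family.

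\textbf{Main obstacle.} The conceptual content is light; the one point requiring care is the interchange of expectation and the (double) spherical integral and the justification that $v\mapsto H_{q_j}(\gamma_j^T v)$ is a well-defined $L^2$-valued (indeed $C_{q_j}$-valued) Bochner-integrable map — this follows from continuity in $v$, compactness of the sphere, and finite-dimensionality of $C_{q_j}$, so Fubini applies without difficulty. The only genuinely computational step is the Beta-integral evaluation of $\beta(N,q)$, which is routine. I would therefore expect the ``hard part'' to be purely bookkeeping: keeping the normalization constants consistent between \eqref{eq:sphericalChaos}, \eqref{eq:beta}, and the constants $\mathfrak C_N(q,i,u)$ used later in Theorem~\ref{t:wienerchaos}.
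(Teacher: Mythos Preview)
Your proof is correct and follows essentially the same approach as the paper: reduce to single blocks by independence, use the Hermite covariance formula $\E[H_{q}(\gamma^T v)H_{q'}(\gamma^T w)]=\delta_{q,q'}\,q!\,\langle v,w\rangle^{q}$, apply rotational invariance to fix one variable, and identify the resulting spherical integral as $\beta(N,q)$. Your write-up is in fact more careful than the paper's (which dismisses chaos membership and orthogonality as ``obvious'' and only displays the variance computation), and you additionally explain why odd $q_j$ are excluded.
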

\begin{proof}
Since $\gamma_1,\dots,\gamma_k$ are independent, the elements of $\mathscr{H}(N_1,\dots,N_k)$ are obviously orthogonal and are in the $q$-th chaos. The only thing to prove is that their variance is one. By independence, it is enough to check it for each factor:
\begin{equation}\label{conto}
        \E \left[ \left( \int_{S^{N-1}} H_q(\gamma^T v) dv \right)^2\right]
          = q! \int_{S^{N-1}\times S^{N-1}} |v^T w|^q dvdw = q!\int_{S^{N-1}}
          \int_{S^{N-1}} |v_1|^q dvdw
    \end{equation}
the first identity is valid for all $q\in 2\N$, which is what we needed.
The identities in \cref{eq:beta} are proven in \cite[Lemma C.1]{cgv2025StecconiTodino}, and $B(z,w)=\frac{\Gamma(z)\Gamma(w)}{\Gamma(z+w)}$ is the classic Beta function.
\end{proof}
}

{
We will only need the case $N_1=1, k=1$, for functionals as that in \cref{eq:Fleblaw}, and we will need the case $N_1=1, k=2, q=2$, for \cref{thm:louis2} in the next section. We choose to discuss the general case, since it is not so much more complicated than the latter. 
\begin{remark}\label{rem:multrick2}
Note that the case $q=2$, there is no mixed product terms in \cref{eq:sphericalChaos}, that is, for instance, $\mathscr{H}_2(1,N)=\{\frac1{\sqrt{2}}H_2(\gamma_1), \frac{1}{\sqrt{2N}}(\|\gamma_2\|^2-N)\}$, Indeed, $\beta(N,2)=\frac{s_{N-1}}{N}$, and we have
\be 
\frac{1}{\sqrt{2\beta(N,2)s_{N-1}}}\int_{S^{N-1}} H_{2}(\gamma^{T}v) dv=\frac{1}{\sqrt{N}}\sum_{j=1}^N\frac{H_2(\gamma^{(j)})}{\sqrt{2}}=\frac{\|\gamma\|^2-N}{\sqrt{2N}}
\ee
by straighforward computations, see also \cref{lem:multimagictrick2222} below.
\end{remark}
The next Lemma implies that $\cup_{q\in \N}\mathscr H_q (0,N)$ spans the space $\mathscr{A}(0,N)$; then, one can easily obtain the more general case by induction, see \cref{lem:multimagictrick} below.
}

\begin{lemma}\label{lem:magictrick} Let $\gamma\sim\mathcal N(0,\mathbbm{1}_N)$ be a standard Gaussian vector of finite dimension $N$. Let $F:\R^{N}\to\R$ be such that $\E[F(\gamma)^{2}]<\infty$. Let us assume that $F$ is invariant up to orthogonal transformations, that is, $F(Dx) = F(x)$ for any $x\in \R^{N}$ and $D\in O(N)$\footnote{Equivalently, we may ask that $F(\gamma) = f(\|\gamma\|)$, for a function $f:\R\to\R$ such that $\E[f(\|\gamma\|)^{2}]<\infty$.}. Then,
	\begin{equation}\label{eq:Fgamma[q]}
        F(\gamma)[q] = c_N (q) \times \int_{S^{N-1}} H_q(\gamma^T v) dv,
    \end{equation}
    where \begin{equation}\label{eq:cq2}
        c_N(q) = \frac{\E [F(\gamma) H_q(\gamma^T e_1)]}{q! \beta(N,q)} \qquad \text{ if } 2|q \text{, and } \, 0 \text{ otherwise.}
    \end{equation}
    \end{lemma}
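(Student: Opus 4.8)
The plan is to use two standard facts about Wiener chaos: (i) the $q$th chaos $C_q$ associated with $\gamma$ is linearly spanned by $\{H_q(\gamma^Tv):v\in S^{N-1}\}$ (a direct consequence of the tensor description \eqref{e:chaosistensor}; see e.g. \cite[Chapter 2]{JansonBook}), so that an element of $C_q$ is determined by its $L^2(\gamma)$-inner products against this family; and (ii) the Mehler-type identity $\mathbb E[H_q(\gamma^Tu)H_q(\gamma^Tv)]=q!\,(u^Tv)^q$ for $u,v\in S^{N-1}$, which follows from $\mathbb E[H_p(X)H_q(Y)]=\delta_{p,q}\,p!\,\rho^p$ (for $X,Y$ jointly standard Gaussian with correlation $\rho$) applied to $X=\gamma^Tu$, $Y=\gamma^Tv$, $\rho=u^Tv$. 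Writing $g:=\int_{S^{N-1}}H_q(\gamma^Tv)\,dv\in C_q$, the goal is then to prove $F(\gamma)[q]=c_N(q)\,g$.

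First I would record two invariance computations. Because $F$ is $O(N)$-invariant and $\gamma\stackrel{d}{=}D\gamma$ for every $D\in O(N)$, the substitution $\gamma\mapsto D\gamma$ combined with $\gamma^T(Dv)=(D^T\gamma)^Tv$ shows that $v\mapsto\mathbb E[F(\gamma)H_q(\gamma^Tv)]$ is $O(N)$-invariant on $S^{N-1}$; by transitivity of the $O(N)$-action it is therefore constant, equal to $P:=\mathbb E[F(\gamma)H_q(\gamma^Te_1)]$. Likewise, a change of variables on the sphere shows $v\mapsto\int_{S^{N-1}}(w^Tv)^q\,dw$ is constant on $S^{N-1}$, equal to $\int_{S^{N-1}}w_1^q\,dw=\beta(N,q)$ (using $w_1^q=|w_1|^q$ when $q$ is even).

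Then I would combine these with self-adjointness of the orthogonal projection onto $C_q$. Since $H_q(\gamma^Tv)\in C_q$, for every $v\in S^{N-1}$,
\[
\mathbb E\big[F(\gamma)[q]\,H_q(\gamma^Tv)\big]=\mathbb E\big[F(\gamma)\,H_q(\gamma^Tv)\big]=P,
\]
whereas Fubini and fact (ii) give $\mathbb E[g\,H_q(\gamma^Tv)]=q!\int_{S^{N-1}}(w^Tv)^q\,dw=q!\,\beta(N,q)$. For $q$ even one has $\beta(N,q)>0$, and subtracting shows that $F(\gamma)[q]-\frac{P}{q!\,\beta(N,q)}\,g$ is an element of $C_q$ orthogonal to every $H_q(\gamma^Tv)$, hence zero by (i); this is exactly \eqref{eq:Fgamma[q]}--\eqref{eq:cq2}. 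For $q$ odd, the substitution $v\mapsto-v$ (which flips the sign of $H_q(\gamma^Tv)$) forces $P=0$ and $g=0$, so the identity holds trivially with $c_N(q)=0$; equivalently, $-\mathbbm{1}_N\in O(N)$ yields $F(-\gamma)=F(\gamma)$, so $F(\gamma)$ has no odd chaotic component.

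I do not expect a serious obstacle here: the argument only combines the spanning property (i), self-adjointness of the chaos projection, and two elementary transitivity-based invariance computations. The single point demanding care is normalization bookkeeping — in particular keeping all spherical integrals \emph{unnormalized}, consistently with the definition of $\beta(N,q)$ in \eqref{eq:beta} — together with checking that the Fubini exchange is legitimate (it is, since $q$ is fixed and $H_q$ is a polynomial, so all integrands are integrable against the product of the Gaussian measure and the surface measure on $S^{N-1}$).
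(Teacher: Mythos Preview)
Your argument is correct and follows essentially the same route as the paper's proof: both exploit the spanning property of $\{H_q(\gamma^Tv):v\in S^{N-1}\}$ together with the $O(N)$-invariance of $F$ to conclude that $F(\gamma)[q]$ must be a scalar multiple of $\int_{S^{N-1}}H_q(\gamma^Tv)\,dv$, and then compute the scalar by an inner product. The only difference is packaging: the paper says ``$F(\gamma)[q]=\int H_q(\gamma^Tv)\,\mu(dv)$ for some measure $\mu$; invariance forces $\mu$ to be uniform'', whereas you argue directly that $F(\gamma)[q]-c_N(q)g$ is orthogonal to every $H_q(\gamma^Tv)$ and hence zero --- your version is slightly more explicit and sidesteps the (mild) nonuniqueness issue in representing chaos elements as spherical integrals against signed measures.
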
 
    \begin{proof} By definition, the $q$th chaos of $\gamma$ is \begin{equation}
    C_q = \left\{ \int_{S^{N-1}} H_q(\gamma^\perp v) \mu(dv) : \text{ } \mu \text{ measure on } S^{N-1} \right\}.
\end{equation}
Then, we may write
    \begin{equation}
        F(\gamma)[q] = \int_{S^{N-1}} H_q(\gamma^\perp v) \mu(dv)
    \end{equation}
    where $\mu$ is a measure on $S^{N-1}$, defined up to orthogonal transformations $D\in O(N)$, since $F(Dx) = F(x)$ for all $x\in\mathbb R^N$. Therefore, $\mu$ is a multiple of the uniform measure on $S^{N-1}$. This proves \eqref{eq:Fgamma[q]}. The coefficient is 
    \begin{equation}\label{eq:cq1}
        c_N(q) = \frac{\E \left[F(\gamma) \int_{S^{N-1}} H_q(\gamma^\perp v) dv\right]}{\E \left[ \left( \int_{S^{N-1}} H_q(\gamma^\perp v) dv \right)^2\right]}.
    \end{equation}
    The denominator in \eqref{eq:cq1} was already computed in \cref{conto} above.
    The numerator can be simplified as well thanks to the invariance of $F$: \begin{align}
        \E \left[F(\gamma) \int_{S^{N-1}} H_q(\gamma^\perp v) dv\right] 
        & = \int_{S^{N-1}} E[F(\gamma)H_q(\gamma^\perp e_1)]dv \\
        & = s_{N-1} E[F(\gamma)H_q(\gamma^\perp e_1)].
    \end{align}
Therefore, we obtain \eqref{eq:cq2}.
\end{proof}

The previous lemma gives an alternative and more compact way to express the chaotic decomposition of a functional with rotational invariance. The following one is an extension to functionals that are invariant up to rotations of fixed subsets of variables. 


\begin{theorem}\label{lem:multimagictrick} 
{The space $\mathscr{A}(N_1,\dots,N_k)$ (of \cref{def:A}) is generated by the orthonormal system $\cup_q\mathscr{H}_q(N_1,\dots,N_k)$ (of \cref{lem:SphericalChaos}). In particular, if $F(\gamma)\in \mathscr{A}(N_1,\dots,N_k)$} then, 
    \be
F(\gamma)[q] = \sum 
c_{\underline{q}} \, H_{q_1}(\gamma_1) \, \prod_{j=2}^k \tyu \int_{S^{N_j-1}} H_{q_j}(\gamma_j^{T}v) dv\uyt,
    \ee
    where the sum ranges over multi-indices $\underline{q}\in \N^{N_1}\times 2\N^{k-1}$, such that $|\underline{q}|=|q_1|+q_2+\ldots+q_k=q$, (note that $q_1\in \N^{N_1}$ is a multi-index itself) and the coefficients are \be
    c_{\underline{q}} = \frac{\E[ F(\gamma) H_{q_1}(\gamma_1) \prod_{j=2}^k H_{q_j}(\gamma_j^{(1)})]}{\underline{q}! \prod_{j=2}^k\beta(N_j,q_j)} = \frac{\E[ F(\gamma) H_{q_1}(\gamma_1) H_{q^\prime}(\gamma^\prime)]}{\underline{q}! \prod_{j=2}^k\beta(N_j,q_j)},
    \ee
    where $q^\prime = (q_2,\ldots,q_k)$ and $\gamma^\prime = (\gamma_2^{(1)},\ldots,\gamma_k^{(1)})$ is the Gaussian vector of the first coordinates of $\gamma_2, \ldots, \gamma_k$. 
\end{theorem}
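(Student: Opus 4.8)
The plan is to reduce the statement to a single Wiener chaos, where it becomes an invariant-theory computation that can be settled using \cref{lem:magictrick} and \cref{lem:SphericalChaos}. First I would note that the compact product group $G := O(N_2)\times\cdots\times O(N_k)$ acts unitarily on $L^2(\gamma)$ by block rotations, $(D_2,\dots,D_k)\cdot F(\gamma_1,\dots,\gamma_k) := F(\gamma_1,D_2^{-1}\gamma_2,\dots,D_k^{-1}\gamma_k)$ (this preserves the law of $\gamma$, since each $\gamma_i$ is standard), and that, by \cref{def:A}, $\mathscr{A}(N_1,\dots,N_k)$ is exactly the fixed-point subspace $L^2(\gamma)^{G}$, which is in particular closed. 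Because block rotations preserve Hermite degree, every chaos projection $P_q\colon L^2(\gamma)\to C_q$ commutes with the $G$-action, so $P_q(\mathscr{A})\subseteq\mathscr{A}$ and hence $\mathscr{A}=\bigoplus_q(\mathscr{A}\cap C_q)$. It thus suffices to prove that, for each $q$, $\mathscr{A}\cap C_q=\mathrm{span}\,\mathscr{H}_q(N_1,\dots,N_k)$: the $L^2$-convergence of the full expansion is then the ordinary Wiener-It\^o decomposition, and the displayed sum for $F(\gamma)[q]$ is finite because the $N_i$ are finite.

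Fix $q$. Using the independence of $\gamma_1,\dots,\gamma_k$, the chaos $C_q$ splits as $C_q=\bigoplus_{a_1+\cdots+a_k=q} C^{(1)}_{a_1}\otimes\cdots\otimes C^{(k)}_{a_k}$, where $C^{(i)}_a$ is the $a$-th Wiener chaos of $\gamma_i$ alone and each summand is $G$-invariant. On each such summand the Haar-averaging projection onto $L^2(\gamma)^G$ equals the tensor product of the identity on the first slot with the $O(N_j)$-Haar-averaging projections on the slots $j\ge2$ (a standard fact about fixed points of a tensor product of representations of a product group); consequently
\[
\mathscr{A}\cap C_q=\bigoplus_{a_1+\cdots+a_k=q} C^{(1)}_{a_1}\otimes\bigl(C^{(2)}_{a_2}\bigr)^{O(N_2)}\otimes\cdots\otimes\bigl(C^{(k)}_{a_k}\bigr)^{O(N_k)}.
\]
For $j\ge2$, the space $\bigl(C^{(j)}_a\bigr)^{O(N_j)}$ is $\{0\}$ if $a$ is odd (apply $-\mathrm{Id}\in O(N_j)$, which acts by $(-1)^a$), while if $a$ is even \cref{lem:magictrick} forces any $O(N_j)$-invariant element of $C^{(j)}_a$ to be a scalar multiple of $\int_{S^{N_j-1}}H_a(\gamma_j^Tv)\,dv$, which is nonzero by the variance computation \eqref{conto}; so this space is one-dimensional, spanned by that integral. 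Together with the standard multi-index Hermite basis $\{H_{q_1}(\gamma_1)/\sqrt{q_1!}:|q_1|=a_1\}$ of $C^{(1)}_{a_1}$, this exhibits $\mathscr{A}\cap C_q$ as the span of the un-normalized elements of $\mathscr{H}_q(N_1,\dots,N_k)$; since \cref{lem:SphericalChaos} says that family is orthonormal, it is an orthonormal basis of $\mathscr{A}\cap C_q$. This is the first assertion.

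For the coefficients, I would argue directly. Given an admissible multi-index $\underline q=(q_1,\dots,q_k)$ with $q_1\in\N^{N_1}$, $q_j\in2\N$ for $j\ge2$ and $|\underline q|=q$, put $B_{\underline q}:=H_{q_1}(\gamma_1)\prod_{j=2}^k\int_{S^{N_j-1}}H_{q_j}(\gamma_j^Tv)\,dv$, so that $F(\gamma)[q]=\sum_{\underline q}c_{\underline q}B_{\underline q}$ with $c_{\underline q}=\E[F(\gamma)B_{\underline q}]/\E[B_{\underline q}^2]$. By independence, the relation \eqref{e:orthohermite} in multi-index form, and \eqref{conto}, the denominator is $\E[B_{\underline q}^2]=\underline q!\,\prod_{j=2}^k\beta(N_j,q_j)\,s_{N_j-1}$, with $\underline q!=q_1!\,q_2!\cdots q_k!$. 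For the numerator I would pull the spherical integrals out of the expectation (legitimate by Cauchy--Schwarz and Fubini) and then, for each $j\ge2$ and each fixed $v\in S^{N_j-1}$, choose $D_j\in O(N_j)$ with $D_je_1=v$ and perform the substitution $\gamma_j\mapsto D_j\gamma_j$ inside $\E$: the $O(N_j)$-invariance of $F$ turns $H_{q_j}(\gamma_j^Tv)$ into $H_{q_j}(\gamma_j^{(1)})$, the integrand ceases to depend on $v$, and the integral over $S^{N_j-1}$ collapses to a factor $s_{N_j-1}$. Iterating over $j=2,\dots,k$ gives $\E[F(\gamma)B_{\underline q}]=\bigl(\prod_{j=2}^k s_{N_j-1}\bigr)\,\E[F(\gamma)H_{q_1}(\gamma_1)\prod_{j=2}^k H_{q_j}(\gamma_j^{(1)})]$, and dividing by the denominator produces exactly the stated value of $c_{\underline q}$ (with $q'=(q_2,\dots,q_k)$ and $\gamma'=(\gamma_2^{(1)},\dots,\gamma_k^{(1)})$).

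The one genuinely delicate point is the second step: identifying the $G$-invariants inside each chaos by combining the tensor-product structure of Wiener chaos with the one-dimensionality of the single-block invariant supplied by \cref{lem:magictrick}. Everything else (the reduction to a fixed chaos, the coefficient bookkeeping) is routine once that structural fact is in place, and no input beyond \cref{lem:magictrick} and \cref{lem:SphericalChaos} is required. As an alternative one could run the induction on $k$ suggested in the remark preceding \cref{lem:magictrick}, stripping off one block at a time via a parametrized version of \cref{lem:magictrick}; the tensor-product argument sketched here simply does all $k$ at once.
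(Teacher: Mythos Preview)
Your proof is correct and takes a genuinely different route from the paper's. The paper proceeds by induction on $k$: for $k=2$ it writes out the chaos expansion in the joint variables, conditions on $\gamma_2$ to obtain $\widetilde F_{q_1}(\gamma_2)=\E[F(\gamma)H_{q_1}(\gamma_1)\mid\gamma_2]$, observes this is $O(N_2)$-invariant, and applies \cref{lem:magictrick}; for $k\ge3$ it strips off the last block in the same way and invokes the inductive hypothesis on the first $k-1$ blocks. You instead identify $\mathscr{A}(N_1,\dots,N_k)$ as the fixed-point space of $G=O(N_2)\times\cdots\times O(N_k)$, use that the $G$-action commutes with the chaos projections, and then compute the $G$-invariants in each $C_q$ all at once via the tensor splitting $C_q=\bigoplus C^{(1)}_{a_1}\otimes\cdots\otimes C^{(k)}_{a_k}$ and the identity $(V_1\otimes\cdots\otimes V_k)^{G_1\times\cdots\times G_k}=V_1^{G_1}\otimes\cdots\otimes V_k^{G_k}$. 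Both arguments ultimately rest on the same one-block input, \cref{lem:magictrick}, and yield the same coefficient formula; your approach makes the representation-theoretic structure explicit and avoids the conditioning bookkeeping, while the paper's induction is slightly more elementary in that it never invokes the tensor-product-of-invariants identity. You already note this alternative in your final paragraph, which is exactly the paper's argument.
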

\begin{proof}
    If $k=2$, we may write \begin{align}
        F(\gamma)[q] & = \sum_{\substack{\underline{q} = (q_1,q_2)\in \N^{N_1}\times \N^{N_2}:\\|q_1|+|q_2|=q}} \frac{\E[F(\gamma) H_{q_1}(\gamma_1)H_{q_2}(\gamma_2)]}{q_1!q_2!} H_{q_1}(\gamma_1)H_{q_2}(\gamma_2) \\
        & = \sum_{q_1\in \N^{N_1}} H_{q_1}(\gamma_1) \frac1{q_1!} \sum_{\substack{q_2\in \N^{N_2}:\\|q_2|=q-|q_1|}} \frac1{q_2!} \E[ \widetilde F_{q_1}(\gamma_2) H_{q_2}(\gamma_2) ] H_{q_2}(\gamma_2) \\
        & = \sum_{q_1\in \N^{N_1}} H_{q_1}(\gamma_1) \frac1{q_1!} c_{N_2}(q-|q_1|) \int_{S^{N_2-1}} H_{q-|q_1|} (\gamma_2^Tv) dv,
    \end{align}
    where $\widetilde F_{q_1}(\gamma_2) = \E[ F(\gamma) H_{q_1}(\gamma_1) | \gamma_2]$ is a function of $\gamma_2$ (by independence) that is invariant under rotations $D\in O(N_2)$, since $F$ is. Then, the last equality follows by applying \cref{lem:magictrick}, and \begin{align}
        c_{N_2}(q-|q_1|) = \frac{\E[ F(\gamma) H_{q_1}(\gamma_1) H_{q-|q_1|} (\gamma_2^{(1)})]}{q_1!(q-|q_1|)! \beta(N_2,q-|q_1|)}
    \end{align}
    if $q-|q_1|$ is even, and $0$ otherwise. If $k\ge 3$, we may argue by induction. Reasoning as in the case $k=2$, but summing from the $k$th term first, we may write \begin{align}
     F(\gamma)[q] & = \sum_{q_k\in 2\N} \frac1{q_k!\beta(N_k,q_k)} \int_{S^{N_k-1}} H_{q_k}(\gamma_k^Tv) dv \times \\
        & \qquad \times \sum_{\substack{q^\prime\in \N^{N_1+\ldots + N_{k-1}}:\\|q^\prime|=q-q_k}} \frac{\E[F(\gamma) H_{q^\prime} (\gamma_1,\ldots,\gamma_{k-1}) H_{q_k}(\gamma_k^{(1)}) ]}{q^\prime!} H_{q^\prime} (\gamma_1,\ldots,\gamma_{k-1}).
    \end{align}
    Remark that the second sum gives the $(q-q_k)$th chaotic component of $\widetilde F(\gamma_1,\ldots,\gamma_{k-1}):= \E[ F(\gamma)H_{q_k}(\gamma_k^{(1)})|\gamma_1,\ldots,\gamma_{k-1}]$. Hence, by inductive hypothesis, we may write \begin{align}
        F(\gamma)[q] & = \sum_{q_k\in 2\N} \frac1{q_k!\beta(N_k,q_k)} \int_{S^{N_k-1}} H_{q_k}(\gamma_k^Tv) dv \times \\
        & \qquad \times \sum_{\substack{q^\prime = |q_1|+q_2+\ldots+q_{k-1}:\\
        q_1\in \N^{N_1}, q_2,\ldots,q_{k-1}\in 2\N, \\
        |q^\prime| =q-q_k}} c_{q^\prime} H_{q_1}(\gamma_1) \prod_{j=2}^{k-1} \left(\int_{S^{N_j-1}} H_{q_j}(\gamma_j^Tv) dv \right),
    \end{align} 
    where \begin{align}
        c_{q^\prime} & = \frac{\E\big[ \E[ F(\gamma)H_{q_k}(\gamma_k^{(1)})|\gamma_1,\ldots,\gamma_{k-1}] H_{q_1}(\gamma_1) \prod_{j=2}^{k-1} H_{q_j}(\gamma_j^{(1)})\big]}{q^\prime! \prod_{j=2}^{k-1}\beta(N_j,q_j)}\\
        & = \frac{\E\big[  F(\gamma)H_{q_1}(\gamma_1) \prod_{j=2}^{k} H_{q_j}(\gamma_j^{(1)})\big]}{q^\prime! \prod_{j=2}^{k-1} \beta(N_j,q_j)}.
    \end{align}
    This is enough to conclude.
    \end{proof}
If $q=2$, \cref{lem:multimagictrick} simplifies as follows.
\begin{corollary}\label{lem:multimagictrick2222} 
Let $F(\gamma)\in \mathscr{A}(N_1,\dots, N_k)$, see \cref{def:A}. Then,  
\be
F(\gamma)[2] = \sum_{j=1}^{N_1} c_1^{(j)} H_2(\gamma_1^{(j)}) +  \sum_{\substack{j,l=1,\ldots,N_1\\j\neq l}} c_1^{(j,h)} \gamma_1^{(j)}\gamma_1^{(h)}
+ \sum_{i=2}^k c_i \left(\|\gamma_i\|^2 - N_i\right) ,
    \ee
    where $\gamma_1^{(j)}$ denotes the $j$th component of the vector $\gamma_1$, for $j=1,\ldots, N_1$, and \bega
    c_{1}^{(j)}  & = \frac12\E [F(\gamma) H_2(\gamma_1^{(j)})],\quad 
    c_{1}^{(j,h)}  = \E [F(\gamma) \gamma_1^{(j)}\gamma_1^{(h)}], \qquad && j,h = 1,\ldots,N_1,\\
    c_{i} & = 
    \frac{\E [F(\gamma) (\|\gamma_i\|^2-N_i)]}{2N_i}, \qquad && i =2,\ldots,k.
    \eega
    In particular, if $N_1=1$, we have \be
    F(\gamma)[2] = c_1  (\gamma_1^2 -1) + \sum_{i=2}^k c_i \left(\|\gamma_i\|^2 - N_i\right).
    \ee
\end{corollary}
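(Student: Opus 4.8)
The plan is to read off \cref{lem:multimagictrick2222} directly from \cref{lem:multimagictrick} by specializing the exponent to $q=2$, and then to massage the resulting coefficients into the stated form. Since $F(\gamma)=e(\gamma_1,\|\gamma_2\|,\dots,\|\gamma_k\|)$ belongs to $\mathscr A(N_1,\dots,N_k)$ (see \cref{def:A}), \cref{lem:multimagictrick} expresses $F(\gamma)[2]$ as a sum over multi-indices $\underline q=(q_1,q_2,\dots,q_k)\in\N^{N_1}\times(2\N)^{k-1}$ with $|q_1|+q_2+\dots+q_k=2$. The first step is a short parity count: each $q_j$ with $j\ge2$ is a non-negative even integer, so $q_2+\dots+q_k\in\{0,2\}$, whence exactly one of the following occurs --- (a) $q_j=0$ for all $j\ge2$ and $q_1$ is a weight-$2$ multi-index on $\{1,\dots,N_1\}$, i.e. either $q_1=2e_j$ or $q_1=e_j+e_h$ with $j\ne h$; or (b) $q_1=0$ and $q_i=2$ for a single $i\in\{2,\dots,k\}$, all other $q_j$ vanishing. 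This is precisely what rules out any ``mixed'' monomial coupling $\gamma_1$ to the norms $\|\gamma_i\|$, and it is really the only structural input; everything else is bookkeeping of constants.

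Next I would evaluate the three resulting families of terms. When $q_1=2e_j$ one has $H_{q_1}(\gamma_1)=H_2(\gamma_1^{(j)})$, $\underline q!=2$, and all spherical factors equal $\int_{S^{N_j-1}}H_0\,dv=s_{N_j-1}=\beta(N_j,0)$, so they cancel against the denominator of $c_{\underline q}$ and leave $\tfrac12\,\E[F(\gamma)H_2(\gamma_1^{(j)})]\,H_2(\gamma_1^{(j)})$; when $q_1=e_j+e_h$ ($j\ne h$) the same computation with $\underline q!=1$ gives $\E[F(\gamma)\gamma_1^{(j)}\gamma_1^{(h)}]\,\gamma_1^{(j)}\gamma_1^{(h)}$. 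When $q_i=2$ the only surviving spherical factor is $\int_{S^{N_i-1}}H_2(\gamma_i^Tv)\,dv$, which equals $\tfrac{s_{N_i-1}}{N_i}\bigl(\|\gamma_i\|^2-N_i\bigr)$ --- the computation already recorded in \cref{rem:multrick2}; combining this with $\beta(N_i,2)=s_{N_i-1}/N_i$ and $\underline q!=2$ produces the coefficient $\tfrac12\,\E[F(\gamma)H_2(\gamma_i^{(1)})]$ in front of $\|\gamma_i\|^2-N_i$. Finally I would rewrite that last coefficient in the advertised form using $\|\gamma_i\|^2-N_i=\sum_{m=1}^{N_i}H_2(\gamma_i^{(m)})$ together with the $O(N_i)$-invariance of $F$, which makes $\E[F(\gamma)H_2(\gamma_i^{(m)})]$ independent of $m$ and hence gives $\E[F(\gamma)(\|\gamma_i\|^2-N_i)]=N_i\,\E[F(\gamma)H_2(\gamma_i^{(1)})]$, i.e. $c_i=\E[F(\gamma)(\|\gamma_i\|^2-N_i)]/(2N_i)$. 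Summing the three families yields the displayed identity, and the case $N_1=1$ is immediate since then no pairs $j\ne h$ arise.

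I do not expect a genuine obstacle: the statement is a corollary and the argument is purely computational. The one point that needs attention is the combinatorial bookkeeping --- correctly cancelling the multi-index factorials $\underline q!$ and the Beta-factors $\beta(N_j,q_j)$ of \cref{lem:multimagictrick} against the plain expectations appearing in the statement, and, for the off-diagonal block-$1$ terms, being consistent about whether the sum runs over ordered pairs $(j,h)$ with $j\ne h$ or over unordered pairs $\{j,h\}$ (the two conventions differ by a factor $\tfrac12$, which has to be fixed once and kept throughout).

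As an alternative, fully self-contained derivation --- bypassing \cref{lem:multimagictrick} --- one can instead project $F(\gamma)$ onto the orthogonal basis $\{H_2(\gamma^{(m)}):1\le m\le N\}\cup\{\gamma^{(m)}\gamma^{(m')}:1\le m<m'\le N\}$ of the second Wiener chaos of $\gamma\sim\mathcal N(0,\mathbbm1_N)$, $N=N_1+\dots+N_k$, whose squared norms are $2$ and $1$ respectively, so that the Fourier coefficients are $\tfrac12\E[F(\gamma)H_2(\gamma^{(m)})]$ and $\E[F(\gamma)\gamma^{(m)}\gamma^{(m')}]$. The invariance $F(\gamma)=e(\gamma_1,\|\gamma_2\|,\dots,\|\gamma_k\|)$ then kills, via a single-coordinate reflection inside a block $\gamma_i$ ($i\ge2$), every coefficient $\E[F(\gamma)\gamma^{(m)}\gamma^{(m')}]$ with $m\ne m'$ and at least one of $m,m'$ outside the first block, while a coordinate permutation inside block $i$ forces the diagonal coefficients there to coincide, so that they collapse to a multiple of $\|\gamma_i\|^2-N_i$; this reproduces the same formula, with the constants obtained as above.
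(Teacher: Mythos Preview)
Your primary derivation is exactly the paper's: specialize \cref{lem:multimagictrick} to $q=2$ and invoke \cref{rem:multrick2}, with the bookkeeping (including the ordered/unordered pairs caveat you flag) carried out explicitly where the paper leaves it implicit. Your alternative self-contained argument via direct projection onto the standard basis of $C_2$ together with block-wise $O(N_i)$-invariance is a valid and slightly more elementary route that the paper does not take.
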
 
\begin{proof}
The statement follows immediately by specializing \cref{lem:multimagictrick} to the case $q=2$ and recalling \cref{rem:multrick2}.
\end{proof}

\section{Main Result II: Formal statements and proofs}\label{sec:main2}
{
\subsection{A cancellation theorem for local functionals: Proof of 
\cref{thm:Main2}}
\begin{proof}[Proof of \cref{thm:Main2}]
Recall (\cref{eq:therootofalltricks}) that 
\be\label{eq:rrrr} 
\tilde{f}(x)=\frac{c f(x)}{\sqrt{f(x)^2+\|\gamma_x\|^2}},\quad \text{where}\quad c:=\sqrt{\frac{N}{\vol{} (M)}},
\ee
and that $\gamma_x$ is the orthogonal projection in $\R^N$ of $\gamma$, onto the space $\y(x)^\perp$, so that $\gamma=\gamma_x+f(x)\y(x)$. This implies that, for every $x\in M$, $\tilde{f}(x)$ is distributed as $c\frac{\gamma^N}{\|\gamma\|}$, which implies that $I(\tilde{f})$ is square-integrable, under the assumptions of the theorem. 
Observe that since the functional $F(c\frac{\gamma^N}{\|\gamma\|})$ depends on $(\gamma^1,\dots,\gamma^{N-1})$ only via its norm, its chaotic decomposition must have the following form (see \cref{lem:multimagictrick}):
\bega
F\tyu c\frac{\gamma^1}{\|\gamma\|}\uyt [2]&=c_1H_2(\gamma^1)+c_2 \sum_{j=1}^{N-1}H_2(\gamma^j)=(c_1-c_2)(\gamma^1)^2+c_2\|\gamma\|^2-(c_1+(N-1)c_2)
\eega
Therefore, by integrating in $M$, we obtain
\bega 
I(\tilde{f})[2]&=\int_M F\tyu \tilde{f}(x)\uyt [2]
=\int_M (c_1-c_2)f(x)^2+c_2 \|\gamma\|^2-(c_1+(N-1)c_2)dx
\\&= \frac{c_1+(N-1)c_2}{c^2}(\|\gamma\|^2-N),
\eega
where we only used that $c\|f\|_{L^2}=\|\gamma\|$ and that $c^2\vol{}(M)=N$, because of \cref{eq:rrrr}. To conclude, observe that 
\bega 
c_1+(N-1)c_2&=\E\kop F\tyu c\frac{\gamma^1}{\|\gamma\|}\uyt \frac{ H_2(\gamma^1)}{2}\pok+(N-1)\E\kop F\tyu c\frac{\gamma^1}{\|\gamma\|}\uyt \frac{ \sum_{j=1}^{N-1}H_2(\gamma^j)}{2(N-1)}\pok
\\
&=
\frac12\E\kop F\tyu c\frac{\gamma^1}{\|\gamma\|}\uyt (\|\gamma\|^2-N)\pok=0.
\eega
The last term is zero because $\frac{\gamma^1}{\|\gamma\|}$ and $\|\gamma\|$ are independent.
\end{proof}
}

{\subsection{Another cancellation theorem, for local functionals of a field and its derivatives}
In this section we prove the results discussed at the end of \cref{subsec:mainres2}. First of all let us clarify how the Homotheticity Assumption: \cref{eq:homothetic} relates to the notion of \emph{homothetic field}, given in \cite[Definition 1.11]{cgv2025StecconiTodino}. To this end, let us recall that $\E \{f(x)\}=1$ for all $x\in M$, see \cref{rem:varVSvol}. This implies that the \emph{average frequency} $\lambda(f)$ of $f$, as defined in \cite[Definition 1.11]{cgv2025StecconiTodino}, is
\be \label{eq:avgfreq}
\textbf{Average frequency:} \quad \lambda(f)=\sqrt{ \fint_M \E\kop \|\nabla f(x)\|^2 \pok dx }\overset{*}{=}\sqrt{ -\E\fint_M f(x)\Delta f(x) dx }
\ee
We recall from \cite[Section 2.0.1]{cgv2025StecconiTodino}, that such terminology is motivated by the fact that, due to Green's formula ($\overset{*}{=}$) above, if $f$ is a random wave with interval window $I$, then $\lambda(f)^2$ is the average among the eigenvalues in $I$ (counted with multiplicity).
\begin{lemma}\label{lem:homothe}
Let the setting of \cref{sec:ourmodel} prevail. The following two conditions are equivalent: 
\begin{enumerate}[$(i)$]
\item Homotheticity Assumption (\cref{eq:homothetic}): There exists a constant $\xi>0$ such that:
\be \label{eq:homothetic}
\|\de_uY(x)\|= \xi \|u\| \quad \forall x\in M,\ \forall u\in T_xM.
\ee 
    \item The field $f$ is \emph{homothetic} (as in \cite[Definition 1.11]{cgv2025StecconiTodino}): 
\be 
 \E\kop |\de_uf(x)|^2\pok = \frac{\lambda(f)^2}{d}\|u\|^2 \quad \quad \forall x\in M,\ \forall u\in T_xM.
 \ee
\end{enumerate}
Moreover, in case they hold, we have $\xi=\lambda(f)\sqrt{\frac{N}{d\vol{}(M)}}$.
\end{lemma}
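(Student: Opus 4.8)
The plan is to reduce both conditions to a single pointwise identity linking the norm $\|\partial_u Y(x)\|$ to the variance of the directional derivative $\partial_u f(x)$, and then to unwind the definition of the average frequency $\lambda(f)$ by evaluating $\E\{\|\nabla f(x)\|^2\}$ along a $g$-orthonormal frame.

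First I would record the elementary Gaussian computation underlying everything. Since $f=\langle\gamma,Y(\cdot)\rangle\sqrt{\vol{}(M)/N}$ with $\gamma\sim\mathcal N(0,\mathbbm{1}_N)$, differentiating component-wise in a tangent direction $u\in T_xM$ gives $\partial_u f(x)=\langle\gamma,\partial_u Y(x)\rangle\sqrt{\vol{}(M)/N}$, where $\partial_u Y(x)=dY_x(u)\in\R^N$ is the ordinary differential of the smooth map $Y\colon M\to\R^N$. Using $\E[\langle\gamma,v\rangle^2]=\|v\|^2$ for a fixed $v\in\R^N$, this yields
\[
\E\big[|\partial_u f(x)|^2\big]=\frac{\vol{}(M)}{N}\,\|\partial_u Y(x)\|^2,\qquad\forall x\in M,\ \forall u\in T_xM.
\]
This identity already shows that conditions $(i)$ and $(ii)$ differ only by a normalization constant: $(i)$ asserts that the right-hand side equals $\xi^2\,\vol{}(M)\,N^{-1}\|u\|^2$, while $(ii)$ asserts that it equals $\lambda(f)^2 d^{-1}\|u\|^2$, so each implies the other as soon as one matches $\xi^2$ with $\lambda(f)^2 N\,(d\,\vol{}(M))^{-1}$, which is precisely the asserted value of $\xi$.

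It therefore remains to identify $\lambda(f)$ under either hypothesis. Fix $x$ and choose a $g$-orthonormal basis $e_1,\dots,e_d$ of $T_xM$; since $\|\nabla f(x)\|^2=\sum_{i=1}^d|\partial_{e_i}f(x)|^2$, the displayed identity gives $\E\{\|\nabla f(x)\|^2\}=\frac{\vol{}(M)}{N}\sum_{i=1}^d\|\partial_{e_i}Y(x)\|^2$. Assuming $(i)$, this equals $d\,\xi^2\,\vol{}(M)\,N^{-1}$, which is independent of $x$, so \eqref{eq:avgfreq} gives $\lambda(f)^2=\fint_M d\,\xi^2\,\vol{}(M)\,N^{-1}\,dx=d\,\xi^2\,\vol{}(M)\,N^{-1}$, i.e.\ $\xi=\lambda(f)\sqrt{N/(d\,\vol{}(M))}$; plugging this back into the previous paragraph produces $(ii)$ with the stated constant. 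Conversely, assuming $(ii)$, the displayed identity forces $\|\partial_u Y(x)\|^2=\lambda(f)^2 N\,(d\,\vol{}(M))^{-1}\|u\|^2$ for all $x$ and $u$, which is exactly $(i)$ with $\xi=\lambda(f)\sqrt{N/(d\,\vol{}(M))}$.

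The whole argument is a bookkeeping of normalization constants, so I do not expect a genuine obstacle; the only points requiring a line of care are that $\partial_u Y(x)$ is the ordinary (not covariant) differential of the $\R^N$-valued map $Y$, and that $\E\{\|\nabla f(x)\|^2\}=\sum_i\E[|\partial_{e_i}f(x)|^2]$ is frame-independent, being the trace of a well-defined symmetric bilinear form on $T_xM$, so that computing it on any orthonormal basis is legitimate.
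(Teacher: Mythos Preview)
Your proof is correct and follows essentially the same route as the paper: both establish the key identity $\E\{|\partial_u f(x)|^2\}=\frac{\vol{}(M)}{N}\|\partial_u Y(x)\|^2$ and then read off the equivalence and the value of $\xi$ from the definition of $\lambda(f)$. The only cosmetic difference is that the paper computes $\lambda(f)^2/d$ by averaging $\E\{|\partial_u f(x)|^2\}$ over the unit tangent sphere $S(T_xM)$, whereas you sum over a $g$-orthonormal frame; these are the same trace computation.
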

\begin{proof}
Recall that $f(x)=\langle \gamma , \y(x) \rangle$. The equivalence follows from the following identities:
\be
\E\kop |\de_uf(x)|^2\pok=\E\kop |\langle \gamma, d_u\y(x)\rangle |^2\pok=\|\de_u\y(x)\|^2=\|\de_uY(x)\|^2 \frac{\vol{}(M)}{N};
\ee
all such identities are straightforward, given the setting of \cref{sec:ourmodel}. Moreover, \cref{eq:avgfreq} can be equivalently written as follows:
\be 
\frac{\lambda(f)^2}d=\fint_M \fint_{S(T_xM)}\E\kop |\de_u f(x)|^2 \pok 
=
\frac{\vol{}(M)}{N}\fint_M \fint_{S(T_xM)} \|\de_uY(x)\|^2 dx .
\ee
This proves the last statement and concludes the proof of the lemma.
\end{proof}
}
{In the next statement, we use the notation $c=\sqrt{\frac{N}{\vol{}(M)}}$ as in \cref{eq:therootofalltricks}}.
{
\begin{theorem}\label{thm:louis2}
Let the setting of \cref{sec:ourmodel} prevail and assume, in addition, that the Homotheticity Assumption (\cref{eq:homothetic}) holds. Let $\lambda=\lambda(f)$ be the average frequency of $f$, defined as in \cref{eq:avgfreq}.
Let $F\colon \R\times [0+\infty)\to \R$ be measurable and such that $F\tyu c\frac{\gamma^N}{\|\gamma\|},\lambda^{-1}\frac{\|(\gamma^1,\dots,\gamma^d)\|}{\|\gamma\|}\uyt$ is square-integrable. Then, the functional 
\be 
\m E(\tilde{f})=\int_M F\tyu \tilde{f}(x), \|\nabla\tilde{f}(x) \|\uyt dx,
\ee
is square-integrable and there are constants $c_1,c_2,c_3$ depending only on $F, \vol{}(M)$ and $N$, such that:
\bega
\m E(\tilde{f})[1]&= c_1 \int_M f(x) dx,
\\
\m E(\tilde{f})[2] &= c_2 \int_M (f(x)^2-1) dx + c_3\int_M f(x)(\Delta f(x)-\lambda)dx.
\eega
Moreover, if $f$ is also a random wave on $M$ with interval window $I$ (see \cref{d:whatisarandomwave}), then 
\bega 
\m E(\tilde{f})[1]&= c_1 
\frac{1}{c}\delta_0(I) \xi^0,
\\
\m E(\tilde{f})[2] &= \frac{c_3}{c^2}\sum_{\{i\in \N:\lambda_i\in I\}}((\xi^i)^2-1)(\lambda^2-\lambda_i^2),
\eega
{where $\xi^i\sim \m N(0,1)$ is defined as in \cref{eq:relabel} and }where, as before, $c_3$ does not depend on the choice of $I$.
{As a consequence, in this case one has that, either $c_3 = 0$ and $\m E(\tilde{f})[2] = 0$ for all intervals $I$, or $c_3\neq 0$ and $\m E(\tilde{f})[2] = 0$ if and only if $f$ is purely monochromatic (that is, if $I=\{\lambda\}$). }
\end{theorem}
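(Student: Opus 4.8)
The strategy is to apply the machinery of Section \ref{sec:magictrick} — specifically \cref{lem:multimagictrick} and \cref{lem:multimagictrick2222} — to the functional $\m E(\tilde f)$, exploiting the decomposition $\tilde f(x) = c\, f(x)/\sqrt{f(x)^2 + \|\gamma_x\|^2}$ from \cref{eq:therootofalltricks} together with its differentiated analogue, and then integrate over $M$ to collapse the resulting chaos projections into the claimed integral expressions.

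\textbf{Step 1: reduce the pointwise integrand to a functional in $\mathscr A(1,d)$.} For fixed $x\in M$, write $\gamma = \gamma_x + f(x)\y(x)$ where $\gamma_x \perp \y(x)$, and decompose $\nabla\tilde f(x)$ in terms of $f(x)$, $\nabla f(x)$, and $\gamma_x$. Under the Homotheticity Assumption \eqref{eq:homothetic}, the conditional law of $\nabla f(x)$ given $f(x)$ is that of an isotropic Gaussian vector on $T_xM$ with variance $\tfrac{\lambda^2}{d}$ per coordinate (by \cref{lem:homothe}); moreover $\nabla f(x)$ and $f(x)$ are independent. Consequently $\|\nabla\tilde f(x)\|$ — after the $0$-homogeneous renormalization — can be written as a function of $f(x)$ and the norm of a $(d)$-dimensional standard Gaussian vector $\eta_x$ that is independent of $f(x)$. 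Pulling back to the fixed Gaussian vector $\gamma$ via rotational invariance of the law, we obtain $F(\tilde f(x),\|\nabla\tilde f(x)\|) \overset{\mathrm{law}}{=} \widetilde F(\gamma^N, \|(\gamma^1,\dots,\gamma^d)\|)$ for an appropriate $\widetilde F$, so the integrand belongs to the class $\mathscr A(1,d)$ of \cref{def:A}, with $N_1 = 1$, $N_2 = d$. Square-integrability of $\m E(\tilde f)$ follows from the hypothesis on $F$ and Jensen/Fubini.

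\textbf{Step 2: apply the multi-magic-trick at orders $1$ and $2$.} By \cref{lem:multimagictrick} with $q=1$, the first-chaos component of the integrand is a multiple of $f(x)$ (the norm variables contribute no odd chaos), so integrating gives $\m E(\tilde f)[1] = c_1 \int_M f(x)\,dx$ with $c_1$ depending only on $\widetilde F$, $\vol{}(M)$, $N$. By \cref{lem:multimagictrick2222} with $N_1 = 1$, the second-chaos component of the integrand has the form $a_1(f(x)^2 - 1) + a_2(\|\eta_x\|^2 - d)$. The delicate point is that $\|\eta_x\|^2$ is not itself the value of a chaos element of the original $\gamma$ in a way independent of $x$: it must be re-expressed. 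Using the explicit relation between $\|\nabla f(x)\|^2$, $\Delta f(x)$, $f(x)$ and the spectral data — concretely, the pointwise identity arising from the relabeling \eqref{eq:relabel} which gives $\|\nabla f(x)\|^2$ and $f(x)\Delta f(x)$ as quadratic forms in $\gamma$ — one rewrites the $\|\eta_x\|^2 - d$ term, after integrating over $M$ and using $\int_M \|\nabla f(x)\|^2 dx$ and $\int_M f(x)\Delta f(x)\,dx$, as a linear combination of $\int_M(f(x)^2-1)\,dx$ and $\int_M f(x)(\Delta f(x) - \lambda)\,dx$. Collecting terms yields $\m E(\tilde f)[2] = c_2\int_M(f(x)^2-1)\,dx + c_3\int_M f(x)(\Delta f(x)-\lambda)\,dx$, with $c_2, c_3$ depending only on $\widetilde F$, $\vol{}(M)$, $N$. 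The coefficient $c_2$ itself will in fact be pinned down (up to $c_3$) by the normalization constraint $\E\{\m E(\tilde f)\}$ finite together with the independence of $\gamma^1/\|\gamma\|$ and $\|\gamma\|$, exactly as in the proof of \cref{thm:Main2}; but the statement only needs existence, so this is optional.

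\textbf{Step 3: specialize to the random-wave case.} If $f$ is moreover a random wave with interval window $I$, substitute the spectral expansion $f = \sqrt{\vol{}(M)/N}\sum_{\lambda_i\in I}\xi^i\f_i$ from \eqref{eq:relabel}. Then $\int_M f\,dx = \sqrt{\vol{}(M)/N}\,\xi^0\delta_0(I)$, giving the stated formula for $\m E(\tilde f)[1]$ with the factor $1/c$. For the second chaos, orthonormality of the $\f_i$ and $\Delta\f_i = -\lambda_i^2\f_i$ give $\int_M f(x)^2\,dx = \tfrac{\vol{}(M)}{N}\sum(\xi^i)^2$ and $\int_M f(x)\Delta f(x)\,dx = -\tfrac{\vol{}(M)}{N}\sum\lambda_i^2(\xi^i)^2$, while $\lambda^2 = \tfrac1N\sum_{\lambda_i\in I}\lambda_i^2$ (counted with multiplicity — this is why the $c_2$ and $c_3$ terms recombine cleanly). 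Substituting and using $\sum_{\lambda_i\in I}(\lambda^2 - \lambda_i^2) = 0$ to absorb the $c_2$-term, one finds that only the harmonic polynomial \eqref{e:thebestharmonicpolynomial} survives:
\begin{equation*}
\m E(\tilde f)[2] = \frac{c_3}{c^2}\sum_{\{i\in\N:\lambda_i\in I\}}\bigl((\xi^i)^2 - 1\bigr)\bigl(\lambda^2 - \lambda_i^2\bigr).
\end{equation*}
Since this expression is a specific nonzero element of the second Wiener chaos whenever $c_3\neq 0$ and the $\lambda_i$ for $\lambda_i\in I$ are not all equal, the final dichotomy is immediate: either $c_3 = 0$, in which case $\m E(\tilde f)[2]$ vanishes for every window $I$; or $c_3 \neq 0$, in which case $\m E(\tilde f)[2] = 0$ forces $\lambda^2 - \lambda_i^2 = 0$ for all $i$ with $\lambda_i\in I$, i.e. $I$ contains a single eigenvalue and $f$ is purely monochromatic.

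\textbf{Main obstacle.} The routine part is the bookkeeping of constants; the genuine difficulty is Step 1 together with the re-expression in Step 2 — namely, writing $\|\nabla\tilde f(x)\|$ as a $0$-homogeneous function of $(f(x),\gamma_x,\nabla f(x))$ whose second-chaos content, after integration, lands precisely in the two-dimensional span of $\int_M(f^2-1)$ and $\int_M f(\Delta f - \lambda)$. This requires the Homotheticity Assumption in an essential way (to ensure the gradient is conditionally isotropic, so that only $\|\nabla f(x)\|$ enters and the $x$-dependence of the chaos coefficients disappears) and the Green-type identity linking $\int_M\|\nabla f\|^2$ to $\int_M f\,\Delta f$. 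One must also check carefully that no off-diagonal second-chaos terms (the $c_1^{(j,h)}\gamma_1^{(j)}\gamma_1^{(h)}$ of \cref{lem:multimagictrick2222}) appear — they cannot, since $N_1 = 1$ — and that the contribution of $\nabla f(x)$ beyond its norm integrates to zero by isotropy.
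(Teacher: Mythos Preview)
Your overall plan is on the right track, but there is a concrete error in Step~1 that propagates through the rest of the argument. You claim that $F(\tilde f(x),\|\nabla\tilde f(x)\|)$ lies in $\mathscr A(1,d)$, i.e.\ depends on $\gamma$ only through $f(x)$ and a $d$-dimensional Gaussian norm. This is false: since $\tilde f(x)=c\,f(x)/\|\gamma\|$ and $\|\nabla\tilde f(x)\|=c\,\|\nabla f(x)\|/\|\gamma\|$, the denominator $\|\gamma\|$ involves all $N$ Gaussian coordinates. The correct class is $\mathscr A(1,d,N-d-1)$ with a \emph{third} block corresponding to $\gamma_x^\nabla$, the part of $\gamma$ orthogonal to both $\y(x)$ and the $d$ gradient directions $\xi^{-1}\partial_i Y(x)$. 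Consequently your pointwise second-chaos expansion is missing the term proportional to $\|\gamma_x^\nabla\|^2-(N-d-1)$, which after integration produces an additional contribution proportional to $\|\gamma\|^2-N=c^2\int_M(f^2-1)$.

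This missing term is exactly what makes Step~3 go wrong. In the random-wave case you write that the $c_2$-term is ``absorbed'' via $\sum_i(\lambda^2-\lambda_i^2)=0$, but $c_2\int_M(f^2-1)=\tfrac{c_2}{c^2}\sum_i((\xi^i)^2-1)$ carries no eigenvalue weights, so that identity is irrelevant to it. The paper's proof instead shows that the total coefficient of $\|\gamma\|^2-N$ must vanish by the independence of $(\tilde f(x),\|\nabla\tilde f(x)\|)$ from $\|\gamma\|$ --- the same mechanism as in \cref{thm:Main2}, which you allude to in Step~2 but never actually invoke at the point where it is needed. Once you correct the block structure to $\mathscr A(1,d,N-d-1)$ and deploy the independence argument to kill the $\|\gamma\|^2-N$ coefficient, the rest of your outline (Green's identity to pass from $\|\nabla f\|^2$ to $f\Delta f$, and the final dichotomy) goes through.
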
}
{\begin{proof}
We follow the same scheme as that of the previous proof, but we refine the argument to include the derivative. Let us fix $x\in M$, and let us fix an orthonormal basis $\de_1(x)\dots,\de_d(x)$ of $T_xM$ 
The Constant Norm Assumption (\cref{eq:YlmNorm}) and the homotheticity assumption (\cref{eq:homothetic}) ensure that the vectors $\y(x)=\frac{Y(x)}{c},\xi^{-1}\de_1 Y(x),\dots,\xi^{-1}\de_dY(x)$ are orthonormal in $\R^N$, generating a space that we will denote as $A_x\subset \R^N$. Therefore, recalling that $f(x)=\langle \gamma, \y(x)\rangle$, we can easily see that
\bega 
\gamma^\nabla_x
&:=\gamma-\tyu \langle \gamma,\y(x)\rangle \y(x) + \sum_{i=1}^d\langle \gamma,\frac{\de_iY(x)}{\xi}\rangle \frac{\de_iY(x)}{\xi} \uyt
\\
&=\gamma-\tyu f(x)\y(x) + \sum_{i=1}^d\tyu \frac{c}{\xi}\de_if(x) \uyt \xi^{-1}\de_iY(x)\uyt
\eega
is the orthogonal projection of $\gamma$ onto $A_x^\perp$. As a consequence, it is a standard Gaussian vector of rank $N-d-1$, supported on $A_x^\perp$; and $(f(x),\frac{c}{\xi}\de_1f(x),\dots,\frac{c}{\xi}\de_df(x))$ is a standard Gaussian vector in $\R^{d+1}$. Recalling \cref{eq:therootofalltricks}, we deduce that $F\tyu \tilde{f}(x), \|\nabla\tilde{f}(x) \|\uyt$ has the same distribution as the following random variable
\be 
X=F\tyu c \frac{\gamma^N}{\|\gamma\|}, \frac{c}{\xi}\frac{\|(\gamma^1,\dots,\gamma^d)\|}{\|\gamma\|}\uyt,
\ee
so that $E(\tilde{f}(x))$ is square-integrable. Given the form of the functional $X$, its first and second chaotic decompositions must have the following form (see \cref{lem:multimagictrick}):
\bega 
X[1]&=a \gamma^N
\\
X[2]&=c_0 H_2(\gamma^N)+c_1 \sum_{i=1}^{d}H_2(\gamma_i)+c_2 \sum_{i=d+1}^{N-1}H_2(\gamma_i)
\\
&=(c_0-c_2)( (\gamma^N)^2-1)+(c_1-c_2)\tyu \|(\gamma^1,\dots,\gamma^d)\|^2-d\uyt+c_2 \tyu \|\gamma\|^2-N\uyt
\eega
Threfore, by integrating over $M$, we obtain $\m E(\lebf)[1]=a\int_Mf(x) dx$ and
\bega 
\m E(\tilde{f})[2]&=(c_0-c_2)\int_M \tyu f(x)^2-1 \uyt dx + (c_1-c_2)\int_M \tyu \frac{c^2}{\xi^2}\|\nabla f(x)\|^2-d\uyt  dx+\frac{Nc_2}{c^2}(\|\gamma\|^2-N)
\\
&=c_3\tyu\int_M (f(x)^2-1)dx\uyt + c_4\int_M (\|\nabla f(x)\|^2-d)
\\
&=c_5(\|\gamma\|^2-N) + c_6\int_M f(\Delta f+\lambda^2 f),
\eega
for appropriately chosen constants $c_j\in \R$ and $\lambda>0$.  In particular, $\lambda$ is defined by the fact that the above expression must have zero expectation, being an element of the second chaos, which means $\lambda^2=-\fint_M\E f(x)\Delta f(x) dx$. The last identity is due to Green's equation: $\int_M \|\nabla f\|^2 + f(x) \Delta f(x) dx=0$. This concludes the first part of the proof. Now, let us assume that $f$ is also a random wave in the sense of \cref{d:whatisarandomwave}. It follows that $cf=\sum_{i\in \N:\lambda_i\in I}\xi^iY_i$ and that the functions $Y_i$ (besides being orthonormal due to \cref{eq:Yorthogonality}) are eigenfunctions: $\Delta Y_i=\lambda_i Y_i$, so that the expression above becomes
\be 
\m E(\tilde{f})[2]=\frac{1}{c^2}\sum_{i\in \N:\lambda_i\in I}(\xi_i^2-1)\qwe c_5+c_6(\lambda^2-\lambda_i^2) \ewq;
\ee
where since the expectation, again, should be zero, we have: $\lambda^2=\sum_{i\in \N:\lambda_i\in I}\lambda_j^2$.
At this point we can argue as in the previous proof (proof of \cref{thm:Main2}) to deduce that since $\tilde{f}(x)$ and $\|\gamma\|^2=\sum_{i\in \N:\lambda_i\in I}\xi_j^2$ are independent, we necessarily have the following:
\bega
0=\E\kop \m E(\tilde{f}(x))(\|\gamma\|^2-N)\pok=\E\kop \frac{1}{c^2}\sum_{i,j\in \N:\lambda_i,\lambda_j\in I}H_2(\xi_j)H_2(\xi_i)\qwe c_5+c_6(\lambda^2-\lambda_i^2) \ewq \pok=2Nc_5.
\eega
To conclude the proof it is sufficient to observe that the random variable $\sum_{i\in \N:\lambda_i\in I}(\xi_i^2-1)(\lambda^2-\lambda_i^2)$ is identically zero if and only if $ I=\{\lambda\}$. {(We note that the constants $c_1,c_2,c_3$ in the statement do not correspond to those is the proof.)}
\end{proof}
\begin{remark}
Consider \cref{r:afterRW}, Point \ref{itm:HomogeneouSpaz}. The Homotheticity Assumption restricts even more the class of manifolds for which we can apply \cref{thm:louis2} to the associated random waves. A natural example is given by those Riemannian homogeneous spaces $M=G/K$ for which the group acts transitively on the tangent space $TM$. Indeed this implies that the field $(x,u)\mapsto \de_u f(x)$ must have constant variance, that is precisely what \cref{eq:homothetic} says. Thus, \cref{thm:louis2} applies to the torus $M=\mathbb T^d$ and its arithmetic random waves.
\end{remark}
}

\section{Main Results III: Formal Statements and proofs}\label{sec:main3}

In this Section we focus our attention on the simplest geometric functional, the excursion area. 

\subsection{Excursion Area}
We are interested in the functional $\m E_u(\tilde f)$, defined for
$|u|\in \left[0,\frac{N}{\vol{}(M)}\right]$, defined in \cref{eq:ExcursionArea} as:

\be \label{eq:Eu}
\m E_u(\tilde f):=\int_M \I{\kop \lebf(x) \ge u\pok} \dd x.
\ee
\begin{remark}
Note that if $u > \sqrt{\frac{N}{\vol{}(M)}}$, then $\m E_u(\tilde f)=0$, by Cauchy-Schwarz, since $\lebf(x)=\langle \frac{\gamma}{\|\gamma\|},Y(x)\rangle$ and $\|Y(x)\|^2=\frac{N}{\vol{}(M)}$. For the same reason, if $u <-\sqrt{\frac{N}{\vol{}(M)}}$, then $\m E_u(\tilde f)=\vol{}(M)$.
\end{remark}

\begin{lemma}\label{lem:IFGammax} Let the setting of \cref{sec:ourmodel} prevail. Let $\y$ be as in \cref{e:unitvector}. We have that
\bega 
\I{\kop \lebf(x) \ge
 u \pok}&= 
\I{\kop \langle {\gamma}, \y(x)\rangle \ge \|\gamma_x\| \frac{u}{\sqrt{\frac{N}{\vol{}(M)}-u^2}} \pok}.
\eega
\end{lemma}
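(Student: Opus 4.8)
The plan is to unwind the definition of $\tilde f$ and $\gamma_x$ and reduce the inequality to a purely algebraic manipulation. Recall from \eqref{e:fcontilda} and \eqref{eq:therootofalltricks} that $\tilde f(x) = c\,f(x)/\sqrt{f(x)^2 + \|\gamma_x\|^2}$, where $c = \sqrt{N/\vol{}(M)}$ and $f(x) = \langle \gamma, \y(x)\rangle$. Since the denominator $\sqrt{f(x)^2 + \|\gamma_x\|^2}$ is strictly positive almost surely (it vanishes only on the null event $\gamma = 0$), the event $\{\tilde f(x) \ge u\}$ is equivalent to
\[
c\, f(x) \ge u \sqrt{f(x)^2 + \|\gamma_x\|^2}.
\]

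First I would dispose of the case $u \le 0$: then the right-hand side is $\le 0$ while we may split according to the sign of $f(x)$, and a short check shows the claimed equivalence holds directly (both sides of the asserted identity reduce to comparing $f(x)$ against a non-positive quantity, using that $c>0$). For $u > 0$, both sides of the displayed inequality can be positive, so I would argue that $\{c f(x) \ge u\sqrt{f(x)^2+\|\gamma_x\|^2}\}$ forces $f(x) \ge 0$, and on that event we may square both sides (an equivalence since both are non-negative) to get $c^2 f(x)^2 \ge u^2 f(x)^2 + u^2 \|\gamma_x\|^2$, i.e. $(c^2 - u^2) f(x)^2 \ge u^2 \|\gamma_x\|^2$. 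Here $c^2 - u^2 = \frac{N}{\vol{}(M)} - u^2 > 0$ by the standing assumption $u \in (-\sqrt{N/\vol{}(M)}, \sqrt{N/\vol{}(M)})$, so dividing and taking square roots (legitimate since $f(x) \ge 0$) yields exactly $f(x) \ge \|\gamma_x\|\, u / \sqrt{\frac{N}{\vol{}(M)} - u^2}$, which is the right-hand side of the statement (recalling $f(x) = \langle \gamma, \y(x)\rangle$). Conversely, if $\langle \gamma, \y(x)\rangle \ge \|\gamma_x\|\, u/\sqrt{c^2 - u^2}$, then in particular $f(x) \ge 0$, and reversing the chain of equivalences gives $\tilde f(x) \ge u$.

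The argument is essentially a routine sign-and-squaring bookkeeping, so I do not expect a genuine obstacle; the only point requiring mild care is the handling of the boundary/degenerate situations — the null event $\gamma = 0$, and keeping the case split on the sign of $u$ (and correspondingly of $f(x)$) clean so that every squaring step is a genuine equivalence rather than merely an implication. One could also streamline the whole thing by noting $\tilde f(x) = c\, f(x)/\|\gamma\| \cdot \sqrt{N/\vol{}(M)}^{\,-1}\cdots$ — but the cleanest route is the direct one above, treating $u\ge 0$ and $u<0$ separately and invoking $\frac{N}{\vol{}(M)} - u^2 > 0$ to justify the final division.
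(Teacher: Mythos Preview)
Your proof is correct and follows exactly the route the paper has in mind: the paper's own proof is the single line ``Straightforward consequence of \eqref{eq:therootofalltricks}'', and what you have written is precisely that straightforward algebra spelled out, with the sign/squaring bookkeeping made explicit. There is nothing to add.
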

\begin{proof} 
Straightforward consequence of \cref{eq:therootofalltricks}.

\end{proof}

\subsection{Proof of \cref{thm:Main3}}

\begin{proof}
Thanks to \cref{lem:IFGammax}, we can equivalently consider the inequality: \begin{equation}
    \langle {\gamma}, \y(x)\rangle \ge \|\gamma_x\| \frac{u}{\sqrt{\frac{N}{\vol{}(M)}-u^2}},
\end{equation}
whose left and right hand side are independent. Recall the classic $L^2$ expansion for the excursion area of a Gaussian random variable $\gamma^0\sim \m N(0,1)$ above a fixed threshold $\tilde u$:
\be\label{eq:ch} 
\I{\kop \gamma^0 \ge
 \tilde u \pok}=\sum_{q\in \N} J_q(\tilde u)\frac{H_{q}(\gamma^0)}{q!},
\ee
where $J_q(u)=(-1)^{q-1}H_{q-1}(u)\frac{e^{-\frac{u^2}{2}}}{\sqrt{2\pi}}$, for $q\ge 1$, while $J_0(u)=\Phi(u)$. Choosing $\gamma^0 = \langle {\gamma},\y(x)\rangle$, and the conditional threshold $\tilde u = \|\gamma_x\| \frac{u}{\sqrt{\frac{N}{\vol{}(M)}-u^2}}$, the decomposition follows. Let us now investigate the asymptotics behaviour of the coefficients $J_q$, for a fixed $q\in\N$. Since $\|\gamma_x\|\sim \chi_{N-1}$, a chi distribution with $N-1$ degrees of freedom, we have that \bega 
& \E\tyu \|\gamma_x\| \frac{u}{\sqrt{\NN-u^2}}\uyt= \frac{u\E(\chi_{N-1})}{\sqrt{\NN-u^2}} \xrightarrow[N\to \infty]{} u\sqrt{\vol{}(M)},\\
& \Var \tyu \|\gamma_x\| \frac{u}{\sqrt{\NN-u^2}}\uyt = \frac{u^2\Var(\chi_{N-1})}{\NN-u^2} \xrightarrow[N\to \infty]{} 0,
\eega
uniformly in $x$ for any fixed value of $u>0$. Since $J_q(u)\le C_q|u|^{q-1}$ for a positive constant $C_q$, it is enough to check that 
\be
\E\qwe\tyu \|\gamma_x\| \frac{u}{\sqrt{\NN-u^2}}\uyt^q\ewq= \tyu \frac{u}{\sqrt{\NN-u^2}} \uyt^q\E(\chi_{N-1}^q)\le C^\prime_q,
\ee
for all $q\in \N$, uniformly in $N$, and a positive constant $C_q^\prime$. Choosing an even integer $q=2m$, for some $m\in\N$, and $N-1=2\ell$, for some $\ell\in\N$, we may write
\bega 
\tyu \frac{u}{\sqrt{\NN-u^2}} \uyt^q\E(\chi_{N-1}^q)
&=
\tyu \frac{u}{\sqrt{\frac{2\ell+1}{\vol{}(M)}-u^2}} \uyt^{2m}2^m\frac{(m+\ell-1)!}{(\ell-1)!}
\\
&\le C^\prime_m\frac{1}{\ell^m}\ell^{m}\le C_m^\prime,
\eega
allowing us to conclude.
\end{proof} 

\begin{remark}
We leave as an issue for future research to verify whether the series in $q$ converges as $N\to +\infty$, and therefore whether the asymptotic variance of the Uniform model corresponds to the Gaussian one, after neglecting the contribution from the second-order chaotic component.
\end{remark}

\subsection{Projections onto chaoses: proof of \cref{t:wienerchaos}} 
\begin{proof} Thanks to \cref{thm:Main3}, we have that $\I{\kop \lebf(x) \ge
 u \pok}$ is a functional of $(f(x),\|\gamma_x\|)$. Remark that $\gamma_x$ is a rank $N-1$ standard Gaussian vector in $Y(x)^\perp$. Therefore, $\I{\kop \lebf(x) \ge
 u \pok}\in\mathscr{A}(1,N-1)$, as in \cref{def:A}, and \cref{lem:multimagictrick} implies that 
 \begin{equation}
 \I{\kop \lebf(x) \ge
 u \pok} = \sum_{q_1\in \N} \sum_{\substack{q_2\in2\N :\\q_1+q_2=q}} 
c_{(q_1,q_2)} \, H_{q_1}(f(x)) \, \tyu \int_{S(Y(x)^\perp)} H_{q_2}(\gamma_x^{T}v) dv\uyt,
 \end{equation}
 where the coefficient is \begin{align}
 c_{(q_1,q_2)} = \frac{\E\big[ \I{\kop \lebf(x) \ge
 u \pok} H_{q_1}(f(x)) H_{q_2}(\gamma_x^{(1)})\big]}{q_1!q_2!\beta(N-1,q_2)} = \frac{\E\bigg[ J_{q_1} \tyu \|\gamma_x\|  \frac{u}{\sqrt{\frac{N}{\vol{}(N)} - u ^2}}  \uyt H_{q_2}(\gamma_x^{(1)})\bigg]}{q_2!s_{N-3}B\tyu \frac{q+1}{2},\frac{N-2}{2}\uyt}.
 \end{align}
 The last equality follows by applying \cref{thm:Main3}, and \cref{eq:beta}. Since $\gamma_x\sim \|\eta\|$, for $\eta\sim \m N(0,1_{N-1})$, we can define \begin{equation}
 \mathfrak C_N(q_1,q_2,u) = 
  \sqrt{\frac{{s_{N-2}}}{{q_1!q_2!s_{N-3}B\tyu \frac{q+1}{2},\frac{N-2}{2}\uyt}}} \E\bigg[ J_{q_1} \tyu \|\eta\|  \frac{u}{\sqrt{\frac{N}{\vol{}(N)} - u ^2}}  \uyt H_{q_2}(\eta^{(1)})\bigg].
 \end{equation}
 This is enough to conclude, since (recall also \cref{rem:multrick2}) \begin{equation}
     \E\bigg[ J_{q_1} \tyu \|\eta\|  \frac{u}{\sqrt{\frac{N}{\vol{}(N)} - u ^2}}  \uyt H_{q_2}(\eta^{(1)})\bigg] = \E\bigg[ J_{q_1} \tyu \|\eta\|  \frac{u}{\sqrt{\frac{N}{\vol{}(N)} - u ^2}}  \uyt \frac{(\|\eta\|^2 - (N-1))}{N-1} \bigg].
 \end{equation} 
 \end{proof}

\subsection{An explicit computation for the second chaos}

We already know from \cref{thm:Main1} and \cref{thm:Main2} that the second chaos of the excursion volume of a uniform random wave is zero, provided that we are in the setting of \cref{sec:ourmodel}.
However, we believe it is useful to provide an explicit, analytic derivation of its expression, and to show that its variance is zero. The reasons are two. First, our method provides an explicit expression and variance asymptotics for the second chaos of \begin{equation}
    \m E_u(\tilde f|_{A}) = \int_A \I{\kop \lebf(x) \ge u\pok}\, \dd x,
\end{equation}
where $A\subset M$ is measurable. Second, the proof of the following proposition contains all the ideas necessary for the proof of \cref{p:lowerbound}, but with a significantly smaller amount of technicalities. We note that some of the computations required for the proof are presented in the Appendix, see \cref{thm:cov2nd}.

Let us define the constant $\sigma_N:=\left(1+\frac{u^2}{\NN-u^2}\right)^{-1/2}$.

\begin{proposition}
\label{prop:secondchaos}
Let the setting of \cref{sec:ourmodel} prevail. Let $A\subset M$ be measurable. Then,
   \begin{align} \label{eq:secondchaos}
    \m E_u(\lebf|_{A})[2]  & = C_N \left( \|f|_A\|_{L^2(M)}^2  - \frac1{N-1} \int_{A} \|\gamma_x \|^2  dx \right),
\end{align}
where, denoting $v=u\sqrt{\vol{}(M)}$, \begin{align}\label{eq:coeff2}
     C_N = C_N(u) & := - \frac12 \frac{v\sigma_N^{N}}{\sqrt{2\pi}} \frac1{\sqrt{N-v^2}}\frac{\sqrt2\Gamma(N/2)}{\Gamma((N-1)/2)}  \xrightarrow{N\to\infty}  J_2 (v) ;
\end{align}
and
\begin{align}
    \Var \left( \m E_u(\lebf|_A)[2] \right) 
    & =  C_N^2 \frac{N}{(N-1)^2} \Bigg[  \Var \left( \|f|_A\|_{L^2(M)}^2 \right) \cdot N   - 2 \vol{}(A)^2 \Bigg] .
    \label{eq:BerrysCanc}
\end{align}
Moreover, if $A=M$, then the above expression is zero (i.e., $\m E_u(\lebf)[2]=0$).
\end{proposition}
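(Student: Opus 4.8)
The plan is to first identify $\I\{\lebf(x)\ge u\}$, for fixed $x$, as an element of the class $\mathscr{A}(1,N-1)$ of \cref{def:A}, and then to apply \cref{lem:multimagictrick2222}. By \cref{lem:IFGammax} (equivalently by the representation \cref{eq:therootofalltricks}), $\I\{\lebf(x)\ge u\}$ is a function of the pair $(f(x),\|\gamma_x\|)$, where $f(x)=\langle\gamma,\y(x)\rangle\sim\mathcal N(0,1)$ and $\gamma_x$ (see \cref{def:gammax}) is an \emph{independent} standard Gaussian vector of rank $N-1$ supported on $\y(x)^\perp$. Choosing an orthonormal basis of $\R^N$ with first vector $\y(x)$ and using the rotational invariance of Wiener chaos, \cref{lem:multimagictrick2222} gives
\be
\I\{\lebf(x)\ge u\}[2]=c_1\,(f(x)^2-1)+c_2\,(\|\gamma_x\|^2-(N-1)),
\ee
with $c_1=\tfrac12\E[\I\{\lebf(x)\ge u\}H_2(f(x))]$ and $c_2=\tfrac1{2(N-1)}\E[\I\{\lebf(x)\ge u\}(\|\gamma_x\|^2-(N-1))]$. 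By rotational invariance of $\gamma$, the law of $(\lebf(x),f(x))$, hence the constants $c_1,c_2$, do not depend on $x$.

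Next I would integrate over $A$: using $\int_A f(x)^2\,dx=\|f|_A\|_{L^2(M)}^2$, $\int_A 1\,dx=\vol{}(A)$ and $\E\|\gamma_x\|^2=N-1$, the constant contributions collect into $-(c_1+(N-1)c_2)\vol{}(A)$. The key point is that this factor vanishes: $c_1+(N-1)c_2=\tfrac12\E[\I\{\lebf(x)\ge u\}(\|\gamma\|^2-N)]=0$, since $\lebf(x)=\langle\gamma/\|\gamma\|,Y(x)\rangle$ depends on $\gamma$ only through the direction $\gamma/\|\gamma\|$, which is independent of $\|\gamma\|$. This yields \cref{eq:secondchaos} with $C_N:=c_1$.

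For the explicit value of $C_N$, I would substitute the expansion of \cref{thm:Main3} into $c_1=\tfrac12\E[\I\{\lebf(x)\ge u\}H_2(f(x))]$; by orthonormality of the Hermite polynomials and independence of $f(x)$ and $\gamma_x$, only the $q=2$ term survives, so $C_N=\tfrac12\E\big[J_2\big(\|\gamma_x\|\tfrac{u}{\sqrt{\NN-u^2}}\big)\big]$. Writing $t:=u(\NN-u^2)^{-1/2}$ and inserting the explicit form of $J_2$ reduces this to a constant times $\E[\|\gamma_x\|e^{-t^2\|\gamma_x\|^2/2}]$; since $\|\gamma_x\|\sim\chi_{N-1}$, the latter is the elementary Gamma integral $\sqrt2\,\Gamma(N/2)\big((1+t^2)^{N/2}\Gamma((N-1)/2)\big)^{-1}$. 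Substituting $1+t^2=N/(N-v^2)=\sigma_N^{-2}$ and $t=v/\sqrt{N-v^2}$, with $v=u\sqrt{\vol{}(M)}$, gives \cref{eq:coeff2}, whose $N\to\infty$ limit follows from $\sigma_N^{N}\to e^{-v^2/2}$ together with $\Gamma(N/2)/\Gamma((N-1)/2)\sim\sqrt{N/2}$.

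For the variance, I would use $\|\gamma_x\|^2=\|\gamma\|^2-f(x)^2$ to rewrite the right-hand side of \cref{eq:secondchaos} as $\tfrac{C_N}{N-1}\big(N\|f|_A\|^2-\vol{}(A)\|\gamma\|^2\big)$, then note $\|f|_A\|^2=\gamma^T Q_A\gamma$ with $Q_A:=\int_A\y(x)\y(x)^T\,dx$ and $\|\gamma\|^2=\gamma^T\mathbbm{1}_N\gamma$, so the bracket equals $\gamma^T B\gamma$ with $B:=NQ_A-\vol{}(A)\mathbbm{1}_N$. Since ${\rm Tr}\,Q_A=\int_A\|\y(x)\|^2\,dx=\vol{}(A)$ one has ${\rm Tr}\,B=0$, whence $\Var(\gamma^T B\gamma)=2\,{\rm Tr}(B^2)=2N^2\,{\rm Tr}(Q_A^2)-2N\vol{}(A)^2$; combined with $\Var(\|f|_A\|^2)=2\,{\rm Tr}(Q_A^2)$ this is exactly \cref{eq:BerrysCanc}. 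Finally, when $A=M$ the Orthonormality Assumption \cref{eq:Yorthogonality} gives $\int_M Y(x)Y(x)^T\,dx=\mathbbm{1}_N$, hence $Q_M=\tfrac{\vol{}(M)}{N}\mathbbm{1}_N$, $B=0$, and $\m E_u(\lebf)[2]=0$ identically. I expect the main obstacle to be the third step — the Gamma-integral evaluation of $C_N$ and the careful bookkeeping of the normalization constants $c=\sqrt{\NN}$, $v$ and $\sigma_N$; by contrast, once the quadratic-form representation is in place, the variance computation (cf. \cref{thm:cov2nd}) is routine.
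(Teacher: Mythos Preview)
Your proposal is correct. The first three steps---identifying $\I\{\lebf(x)\ge u\}$ as an element of $\mathscr A(1,N-1)$, applying \cref{lem:multimagictrick2222}, and evaluating $C_N$ via the $\chi_{N-1}$ moment integral---coincide in substance with the paper's proof; the only minor difference is that the paper computes $c_1$ and $c_2$ \emph{separately} using \cref{lem:gaussianCOV} and then observes that $c_2=-c_1/(N-1)$, whereas you obtain this relation a priori from the independence of $\gamma/\|\gamma\|$ and $\|\gamma\|$ (the same trick as in the proof of \cref{thm:Main2}).

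Your variance computation, however, is genuinely different and cleaner. The paper rewrites \eqref{eq:secondchaos} via the spherical-integral representation $\fint_{S(Y(x)^\perp)} H_2(\gamma_x^Tv)\,dv$, doubles the integral, and invokes \cref{thm:cov2nd} to evaluate each of the three resulting covariance terms as explicit polynomials in $k(x,z)$; the final cancellation at $A=M$ then relies on the identity $\Var(\|f\|^2_{L^2(M)})=2\vol{}(M)^2/N$. You instead substitute $\|\gamma_x\|^2=\|\gamma\|^2-f(x)^2$ to obtain a single quadratic form $\gamma^T B\gamma$ with $B=NQ_A-\vol{}(A)\mathbbm 1_N$, and the variance follows from $\Var(\gamma^TB\gamma)=2\,\mathrm{Tr}(B^2)$ together with $\Var(\|f|_A\|^2)=2\,\mathrm{Tr}(Q_A^2)$. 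This bypasses \cref{thm:cov2nd} entirely, makes the $A=M$ vanishing immediate ($Q_M=\tfrac{\vol{}(M)}{N}\mathbbm 1_N$ by \eqref{eq:Yorthogonality}, hence $B=0$), and gives a more structural explanation of \eqref{eq:BerrysCanc}. The paper's route, on the other hand, has the advantage of preparing the covariance machinery needed later for the fourth-chaos analysis in \cref{thm:cov4th}.
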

{The last sentence of the above proposition can also be inferred from \cref{thm:Main2}. However, notice that in general, the restriction $f|_A$ to a subset $A$ does not satisfy the Orthogonality Assumptions \cref{eq:Yorthogonality}. That is why, for general $A$, the second chaos $\m E_u(\lebf|_A)[2]$ does not need to vanish.}

\begin{proof}
Thanks to \cref{lem:multimagictrick2222}, we have that \be
    \I{\kop \lebf(x) \ge u\pok}[2] = c_1  (f(x)^2 -1) + c_2 \left(\|\gamma_x\|^2 - (N-1)\right),
    \ee
where
\bega
    c_{1}  & = \frac12\E [\I{\kop \lebf(x) \ge u\pok} H_2(f(x))] = \frac12\E\kop J_2
 \tyu \|\gamma_x\| \frac{u}{\sqrt{\NN-u^2}}  \uyt \pok,\quad \\
    c_{2} & = 
    \frac{\E [\I{\kop \lebf(x) \ge u\pok} (\|\gamma_x\|^2-(N-1))]}{2(N-1)} = \frac12 \E\Bigg[
    \Phi
 \underbrace{\tyu \|\gamma_x\| \frac{u}{\sqrt{\NN-u^2}}\uyt}_{=:\alpha(\gamma_x)}   H_2(\gamma_x^{(1)})
 \Bigg],
    \eega
where $\Phi$ denotes the cumulative probability function of a standard Gaussian random variable. First, we compute $c_2$. Let us denote by $\f_{N}$ the density of a standard Gaussian random vector $\gamma\sim \mathcal N(0,1_{N})$. Recalling \cref{def:gammax}, let $\gamma\sim\mathcal N(0,1_{N-1})$, and let $\gamma_1$ denote its first component. By \cref{lem:gaussianCOV} (stated below), we have that
    \begin{align}
        c_2 = \frac12 \E\Bigg[
    \Phi
 {\tyu \alpha(\gamma)\uyt}   H_2(\gamma_1)
 \Bigg] & = \frac12 \E \Bigg[ \varphi_1(\alpha(\gamma))\   \frac{u }{\sqrt{\NN-u^2}} \frac{\gamma_1^2}{\|\gamma\|} \Bigg] \\  
 & = \frac12 \frac{u \ \sigma_N^{N}}{\sqrt{2\pi}\sqrt{\NN-u^2}}   \E\left[ \frac{Y^2}{\sqrt{Y^2+|Z|^2}}\right]\label{eq:uno}
    \end{align}
    where $Y\sim\mathcal N(0,1)$ and $Z\sim\mathcal N(0,1_{N-2})$ are independent. Observe that \begin{align}
        \E\left[ \frac{Y^2}{\sqrt{Y^2+|Z|^2}}\right] = \E\left[ \frac{Y^2}{Y^2+|Z|^2}\right] \E \left[ \sqrt{Y^2+|Z|^2}\right] & 
        = (N-1)^{-1} \sqrt{2}\frac{\Gamma(N/2)}{\Gamma((N-1)/2)},
    \end{align}
    since $\sqrt{Y^2+|Z|^2}\sim \chi_{N-1}$, a $\chi$ random variable with $N-1$ degrees of freedom. Hence, \begin{equation}
 c_2 =\frac12 \frac{u}{\sqrt{\NN-u^2}}  \frac{\sigma_N^{N}}{\sqrt{2\pi}} (N-1)^{-1} \sqrt{2}\frac{\Gamma(N/2)}{\Gamma((N-1)/2)} = -C_N(u) (N-1)^{-1}.
    \end{equation}
Now, we compute $c_1$. Applying \cref{lem:gaussianCOV}, we have
    \begin{equation}\label{eq:due}
        c_1=\E\kop J_2
 \tyu \|\gamma_x\| \frac{u}{\sqrt{\NN-u^2}}  \uyt \pok = \frac{\sigma_N^{N-1}}{\sqrt{2\pi}} \E[\alpha(\sigma_N\gamma)] = C_N(u). 
    \end{equation}
    Note that $\sigma_N^N = \left(1+\frac{u^2}{\NN-u^2}\right)^{-N/2} 
\xrightarrow{N\to\infty}e^{-\vol{}(M)u^2/2}$, and, by the Stirling approximation of the Gamma function, $\E [\chi_{N-1}] \sim \sqrt{N-1}$ as $N\to\infty$. This implies $C_N(u)\xrightarrow{N\to\infty} J_2(u\sqrt{\vol{}(M)})$, consistently with \cref{thm:Main3}. As a consequence, \begin{equation}\label{eq:bbr}
    \I{\kop \lebf(x) \ge u\pok}[2] = C_N(u) H_2(f(x)) - C_N(u) \frac{\|\gamma_x\|^2-(N-1)}{N-1},
\end{equation}
and, integrating over $A\subset M$, we deduce the first statement in the proof. We can now compute the variance. Recall that, equivalently to \eqref{eq:bbr} (\cref{rem:multrick2}), we have \begin{align}
    \I{\kop \lebf(x) \ge u\pok}[2] 
    & = C_N \left( H_2(f(x))  - \fint_{S^{N-2}} H_2(\gamma_x^\perp v) dv \right).
\end{align}
Integrating over $M$, computing the variance by doubling the integrals, and applying  \cref{thm:cov2nd}, we write \begin{align}
    \Var \left(\m E_u(\lebf|_{A})[2] \right) 
    & = \frac{C_N^2}4 \Bigg[ \Var \left( \int_A H_2(f(x)) dx \right) \left(1+\frac1{(N-1)^2}+\frac{2}{N-1}\right)  + \\
    & \qquad + \vol{}(A)^2 \left( \frac{2(N-2)}{(N-1)^2} - \frac{4}{N-1}\right) \Bigg]  \\
    &  = \frac{C_N^2}4 \Bigg[ \Var \left( \int_A H_2(f(x)) dx \right)  \frac{N^2}{(N-1)^2}  - \vol{}(A)^2 \left(  \frac{2N}{(N-1)^2}\right) \Bigg].
\end{align}
If $A=M$, the last expression equals $0$ since the Orthogonality Assumption \cref{eq:Yorthogonality} implies \begin{equation}
    \Var \left( \int_M H_2(f(x)) dx \right) = \Var \left( \|f\|^2_{L^2(M)}\right) = \vol{}(M)^2 \frac2{N}.
\end{equation}
\end{proof}
\begin{lemma}\label{lem:gaussianCOV} Let us consider a standard Gaussian vector $\eta\sim\mathcal N(0,1_{N-1})$, and denote by $\eta_1$ its first component, with density $\varphi_1$. Let $H_m$ be the $m$-th Hermite polynomial. Then, for any $f\in\m C^1(\R)$ and $g\in\m C^0(\R^{N-1})$, the following hold: \begin{align}
    & \E [f(\eta_1)H_m(\eta_1)] = \E[f'(\eta_1)H_{m-1}(\eta_1)], \\
    & \E [\varphi_1(\alpha(\eta)) g(\eta)] = \frac{\sigma_N^{N-1}}{\sqrt{2\pi}} \E [ g(\sigma_N \eta)],
\end{align}
where $\alpha(\eta) = \|\eta\|\frac{u}{\sqrt{\frac{N}{{\vol{}(M)}}-u^2}}$, and $\sigma_N$ is as above.
\end{lemma}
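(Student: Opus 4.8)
The plan is to treat the two displayed identities separately, since they rest on entirely different (and in both cases elementary) mechanisms: the first is Gaussian integration by parts combined with the lowering action of differentiation on Hermite polynomials, while the second is a "complete the square" computation in the Gaussian density followed by a linear change of variables.

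For the first identity, I would start from the differential relation $\frac{d}{dx}\bigl(H_{m-1}(x)\varphi_1(x)\bigr) = -H_m(x)\varphi_1(x)$, which follows at once from $\varphi_1'(x) = -x\varphi_1(x)$ together with the recurrence $H_m(x) = xH_{m-1}(x) - H_{m-1}'(x)$ of Definition \ref{d:wiener}. Writing $\E[f(\eta_1)H_m(\eta_1)] = \int_{\R} f(x)H_m(x)\varphi_1(x)\,dx$, substituting this relation, and integrating by parts, one obtains the boundary term $-\bigl[f(x)H_{m-1}(x)\varphi_1(x)\bigr]_{-\infty}^{+\infty}$ plus $\int_{\R} f'(x)H_{m-1}(x)\varphi_1(x)\,dx = \E[f'(\eta_1)H_{m-1}(\eta_1)]$. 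The boundary term vanishes thanks to the standing integrability assumptions — it suffices, e.g., that $f$ and $f'$ have at most subexponential growth, which is implicit in the requirement that both expectations be finite.

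For the second identity, set $\beta := u\bigl(\tfrac{N}{\vol{}(M)}-u^2\bigr)^{-1/2}$, so that $\alpha(\eta) = \beta\|\eta\|$ and $\varphi_1(\alpha(\eta)) = (2\pi)^{-1/2}e^{-\beta^2\|\eta\|^2/2}$. Expanding the expectation against the $\mathcal N(0,\mathbbm{1}_{N-1})$ density and merging the two Gaussian factors produces, up to the explicit constant $(2\pi)^{-1/2}(2\pi)^{-(N-1)/2}$, the integral of $g(\eta)$ against $\exp\bigl(-(1+\beta^2)\|\eta\|^2/2\bigr)$. Since by definition $1+\beta^2 = \tfrac{N}{\vol{}(M)}\bigl(\tfrac{N}{\vol{}(M)}-u^2\bigr)^{-1} = \sigma_N^{-2}$, the substitution $\eta = \sigma_N\zeta$ rescales $(1+\beta^2)\|\eta\|^2$ to $\|\zeta\|^2$, contributes a Jacobian factor $\sigma_N^{N-1}$, and restores a standard Gaussian integral equal to $(2\pi)^{(N-1)/2}\E[g(\sigma_N\eta)]$. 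Collecting the constants yields exactly $\frac{\sigma_N^{N-1}}{\sqrt{2\pi}}\E[g(\sigma_N\eta)]$.

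Neither step presents a genuine obstacle; the only point requiring any care is the vanishing of the boundary term in the first identity, which is guaranteed by the integrability hypotheses, and the bookkeeping of the normalizing constants in the second, which becomes transparent once one notices the algebraic identity $1+\beta^2 = \sigma_N^{-2}$.
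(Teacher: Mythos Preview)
Your proposal is correct and is exactly what the paper has in mind: its own proof consists of the single sentence ``Both equations follow by standard change of variables arguments,'' and you have simply written out those arguments in full --- Gaussian integration by parts via $\frac{d}{dx}(H_{m-1}\varphi_1)=-H_m\varphi_1$ for the first identity, and the rescaling $\eta=\sigma_N\zeta$ after merging the Gaussian factors for the second.
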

\begin{proof}
    Both equations follow by standard change of variables arguments.
\end{proof}

\subsection{Fourth chaos: proof of \cref{p:lowerbound}}
Once the cancellation of the second chaos has been assessed, the next step is to proceed with the variance computation. This section is devoted to establishing fourth chaotic component variance asymptotics, implying in turn a lower bound for the variance of the whole functional. Recall first that
\begin{equation}
    \m E_u(\lebf|_A)[4]  = \int_{A}\I{\kop \lebf(x) \ge
 u \pok}[4] \, dx.
\end{equation}

We define $u_1$ and $u_2$ as the non-zero roots of the degree-three polynomial \begin{equation}\label{eq:degree3poly}
    H_3\left(u\sqrt{\vol{}(M)}\right)=u^3 \vol{}(M)^{3/2} - 3u \sqrt{\vol{}(M)}.
\end{equation}

We are now in a position to prove \cref{p:lowerbound}. The proof has the same structure as the proof of \cref{prop:secondchaos}, with some additional technicalities. We recommend reading first the proof of \cref{prop:secondchaos} in order to easily understand the next one. First, we provide a chaos decomposition of $\m E_u(\lebf|_A)[4]$, and then an explicit expression of its variance. For the second step, we defer some of the computations to the Appendix, see \cref{thm:cov4th}.

\begin{proof}[Proof of \cref{p:lowerbound}]
Thanks to \cref{lem:multimagictrick}, we can write down an explicit expression of $\I{\kop \lebf(x) \ge u \pok}[4]$, and of $\m E_u(\lebf|_A)[4]$, namely:
\begin{align} 
    \m E_u(\lebf|_A)[4] 
 & = C_N(4,4,u) \int_A \frac{H_{4}\tyu f(x)\uyt}{4!}\, dx \\
 & \qquad + C_N(4,2,u) \int_A  \fint_{S(Y(x)^\perp)} \frac{H_{2}(\gamma_x^Tv)}{2}
  \  \frac{H_{2}\tyu f(x)\uyt}{2} \ dv\, dx \\
  & \qquad + C_N(4,0,u)\int_A  \fint_{S(Y(x)^\perp)} \frac{H_{4}(\gamma_x^Tv)}{4!}
  \ dv \, dx,
\end{align}
where the coefficients \begin{equation}
    C_N(q,i,u) : = \E\bigg[ \I{\kop \lebf(x) \ge
 u \pok} H_{i}(f(x)) H_{q-i}(\gamma_x^{(1)})\bigg], \qquad (q,i) = (4,4),(4,2), (4,0),
\end{equation}
can be explicitly computed as in the proof of \cref{prop:secondchaos}: see \cref{eq:uno,eq:due}, applying \cref{lem:gaussianCOV}. Denoting $v=u\sqrt{\vol{}(M)}$, we find 
\begin{align}\label{eq:coeff4}
    & C_N(4,4,u) = - \frac{\sigma_N^{N-1}}{\sqrt{2\pi}} H_3(v) \xrightarrow{N\to\infty} J_4(v), \\
    & C_N(4,2,u) = - \frac{v \ \sigma_N^{N}}{\sqrt{2\pi}} \sqrt{\frac{N}{N-v^2}} H_2(v) \xrightarrow{N\to\infty} H_2(v) J_2(v) , \\
    & C_N(4,0,u) = C_N(u) (H_2(v)+2) \xrightarrow{N\to\infty}  J_2 (v)(H_2(v)+2) ,
\end{align}
where $C_N(u)$ is defined in \eqref{eq:coeff2}.

    From the above decomposition, it is clear that the variance of $\m E_u(\lebf|_A)[4]$ involves a linear combination of the variances and covariances investigated in \cref{thm:cov4th}, with coefficients \begin{equation}
        C_N(q,i,u)C_N(q',i',u), \qquad (q,i),(q',i') = (4,4),(4,2), (4,0).
    \end{equation}
    After some straightforward algebraic simplification, we obtain the following: 
    \begin{multline}
        \Var(\m E_u(\lebf|_A)[4])  = D_N(4,u) \int_A\int_Ak(x,y)^4 \, dx \, dy \\ + D_N(2,u) \int_A\int_Ak(x,y)^2 \, dx \, dy + D_N(0,u) \vol{}(A)^2,
    \end{multline} 
where \begin{align}
    D_N(4,u) & = \frac{C_N(4,4,u)^2}{4!}\left(1+o\left(\frac1N\right)\right) = \frac{J_4\left(u\sqrt{\vol{}(M)}\right)^2}{4!} \left(1+o\left(\frac1N\right)\right) ,\\
    D_N(2,u) & = O\left(\frac1N \right), \qquad D_N(0,u) = O\left(\frac1{N^2}\right).
\end{align}
We remark that $D_N(4,u)$ vanishes at levels $u=0,u_1,u_2$. Let us now suppose that $A=M=S^2$ and that $\lebf$ is a monochromatic uniform random wave of eigenvalue $\ell$, i.e., a random spherical harmonic. In this case, the asymptotics of \begin{equation}
    \int_{S^2} \int_{S^2} k(x,y)^q \, dx \, dy, \qquad N\to\infty, \quad q\ge 2
\end{equation}
are known, see \cite{MaWi2011defect,MaWi2014}. Then, as $N\to\infty$, we have 
    \begin{align} \label{eq:leading4}
        & D_N(4,u) \int_A\int_Ak(x,y)^4 \, dx \, dy 
        \sim c_4 \ \frac{\log N}{N^2},\\
        & D_N(2,u) \int_A\int_Ak(x,y)^2 \, dx \, dy \sim c_2 \frac1{N^2},
    \end{align}
    for some positive constant $c_2, c_4$, where $c_4$ is non-zero precisely if $u\notin \{0,u_1,u_2\}$. 
    This is enough to conclude the case $q=4$.

    A closer inspection of the previous arguments shows that for every $q\ge 3$, $q\neq 4$, and every level $u\in\R$, we have  \begin{equation}
        \Var(\m E_u(\lebf)[q]) \le 
        \frac{\cjboh_N(q,0,u)^2}{q!}
        \, \Var \left( \int_M H_q(f(x)) dx \right), \qquad N\to\infty;
    \end{equation}
    where $\cjboh_N(q,0,u)$ are as in \cref{t:wienerchaos}, hence square summable, hence bounded.
    This implies that, for a fixed chaos, the variance of the polyspectra of uniform random waves are comparable to the Gaussian ones.
\end{proof}

\section{Appendix: \cref{thm:cov2nd} and \cref{thm:cov4th}}

This Appendix is devoted to the calculations of the covariances between $H_2(f(x))$ and $\fint_{S^{N-2}} H_2(\gamma_x^\perp v)$, and $H_4(f(x))$, $\fint_{S^{N-2}} H_4(\gamma_x^\perp v)dv$, and $H_2(f(x))\fint_{S^{N-2}} H_2(\gamma_x^\perp v)dv$. We denoted the covariance function of $f$ by
\be 
k(x,z):=\E f(x)f(z), \quad \forall x,z\in M.
\ee
Under our working assumptions, we have $k(x,x)=1$. 
\begin{theorem}\label{thm:cov2nd}
\bega
        & \E \kop H_2(f(x))H_2(f(z))\pok=2k(x,z)^2,\\
        & \E \kop \fint_{S(Y(x)^\perp)} H_2(\gamma_x^\perp v) dv \cdot \fint_{S(Y(z)^\perp)} H_2(\gamma_z^\perp v) dv \pok 
        =\frac{2k(x,z)^2+2(N-2)}{(N-1)^2}, \\
        & \E \kop H_2(f(x)) \cdot  \fint_{S(Y(z)^\perp)} H_2(\gamma_z^\perp v) dv \pok  = \frac{2  -  2k(x,z)^2}{N-1}.
\eega
\end{theorem}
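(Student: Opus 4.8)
The plan is to compute the three (co)variances by reducing everything to Gaussian integral identities involving the vectors $\gamma$, their orthogonal projections $\gamma_x,\gamma_z$, and the covariance $k(x,z)=\langle \y(x),\y(z)\rangle$. Recall that $f(x)=\langle\gamma,\y(x)\rangle$ with $\|\y(x)\|=1$, so $(f(x),f(z))$ is a centered Gaussian pair with unit variances and correlation $k(x,z)$; the first identity $\E[H_2(f(x))H_2(f(z))]=2k(x,z)^2$ is then immediate from the classical formula $\E[H_2(X)H_2(Y)]=2(\E XY)^2$ for jointly Gaussian $X,Y$ with unit variance. For the spherical-average terms, the key observation is that the inner integral $\fint_{S(Y(x)^\perp)}H_2(\gamma_x^T v)\,dv$ can be evaluated in closed form: since $\gamma_x$ is a standard Gaussian of rank $N-1$ on $\y(x)^\perp$, one has (cf.\ Remark~\ref{rem:multrick2} and Lemma~\ref{lem:multimagictrick2222})
\[
\fint_{S(\y(x)^\perp)}H_2(\gamma_x^T v)\,dv=\frac{\|\gamma_x\|^2-(N-1)}{N-1},
\]
and likewise with $z$. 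Thus all three statements become covariance computations among the scalars $f(x),f(z),\|\gamma_x\|^2,\|\gamma_z\|^2$.

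The main work is then to compute $\E[(\|\gamma_x\|^2-(N-1))(\|\gamma_z\|^2-(N-1))]$ and $\E[H_2(f(x))(\|\gamma_z\|^2-(N-1))]$. I would do this by writing $\gamma=f(z)\y(z)+\gamma_z$ and decomposing $\y(x)=k(x,z)\y(z)+\sqrt{1-k(x,z)^2}\,w$, where $w\in\y(z)^\perp$ is a unit vector; then $f(x)=k(x,z)f(z)+\sqrt{1-k^2}\,\langle\gamma_z,w\rangle$, so that $f(x)$, conditionally on $f(z)$, is a linear functional of the $(N-1)$-dimensional standard Gaussian $\gamma_z$. Similarly $\gamma_x=\gamma-f(x)\y(x)$ is an explicit linear image of $(f(z),\gamma_z)$, so $\|\gamma_x\|^2$ is a quadratic form in $\gamma_z$ (and $f(z)$) whose second moments are elementary. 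Concretely, using $\|\gamma_x\|^2=\|\gamma\|^2-f(x)^2$ and $\|\gamma_z\|^2=\|\gamma\|^2-f(z)^2$, the computation reduces to the moments of $\|\gamma\|^2$, $f(x)^2$, $f(z)^2$, $f(x)^2f(z)^2$, $\|\gamma\|^2 f(x)^2$, which are all standard: e.g.\ $\Var(\|\gamma\|^2)=2N$, $\E[\|\gamma\|^2 f(x)^2]=N+2$, and $\E[f(x)^2 f(z)^2]=1+2k(x,z)^2$ by Wick's formula. Assembling these with the correct combinatorial bookkeeping yields the stated $\frac{2k(x,z)^2+2(N-2)}{(N-1)^2}$ and $\frac{2-2k(x,z)^2}{N-1}$.

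The step I expect to be the main obstacle is the careful bookkeeping in the mixed term $\E[H_2(f(x))\,(\|\gamma_z\|^2-(N-1))]$: here one must be attentive to the fact that $f(x)$ and $\gamma_z$ are \emph{not} independent (they are only independent when $k(x,z)=0$), so the naive factorization fails, and indeed the answer $\frac{2-2k(x,z)^2}{N-1}$ correctly vanishes as $k(x,z)\to 1$ (where $\gamma_x\to\gamma_z$ and the cross term must match $\E[H_2(f(z))(\|\gamma_z\|^2-(N-1))]$, which is $O(1/(N-1))$ but need not be zero — one checks this consistency separately). The cleanest route is to use $H_2(f(x))=f(x)^2-1$, write $f(x)^2=k^2 f(z)^2+2k\sqrt{1-k^2}f(z)\langle\gamma_z,w\rangle+(1-k^2)\langle\gamma_z,w\rangle^2$, multiply by $\|\gamma_z\|^2-(N-1)$, and take expectations termwise, using that odd moments of $\gamma_z$ vanish and $\E[\langle\gamma_z,w\rangle^2(\|\gamma_z\|^2-(N-1))]=2$ for a unit vector $w$. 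Collecting terms gives $k^2\cdot 2 + (1-k^2)\cdot 2 - (\text{correction})$, and tracking the normalization $1/(N-1)$ from the spherical average produces the claimed formula. Throughout, I would rely only on Wick's theorem, the independence of $f(z)$ and $\gamma_z$, and the chi-square moment identities, so no new machinery is needed beyond what is already in Sections~\ref{sec:ourmodel} and~\ref{sec:magictrick}.
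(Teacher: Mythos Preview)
Your approach is correct and in fact cleaner than the paper's. The paper does \emph{not} collapse the spherical averages via the closed form $\fint_{S(\y(x)^\perp)}H_2(\gamma_x^T v)\,dv=(\|\gamma_x\|^2-(N-1))/(N-1)$; instead it keeps the double spherical integral $\fint\fint (v^T w)^2\,dv\,dw$ and evaluates it by the ``de-spherification'' trick $\gamma_x \stackrel{d}{=}\|\gamma_x\|v$ (with $v$ uniform and independent of $\|\gamma_x\|$), reducing to $\E[\langle\gamma_x,\gamma_z\rangle^2]/\E[\|\gamma_x\|^2]\E[\|\gamma_z\|^2]$, which is then computed in explicit coordinates after rotating so that $\y(x)=e_1$ and $\y(z)=\cos\vartheta\,e_1+\sin\vartheta\,e_2$. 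Your route---using $\|\gamma_x\|^2=\|\gamma\|^2-f(x)^2$ together with the elementary moments $\Var(\|\gamma\|^2)=2N$, $\E[\|\gamma\|^2 f(x)^2]=N+2$, $\E[f(x)^2 f(z)^2]=1+2k^2$---avoids any spherical integration and any choice of coordinates, and generalizes more transparently to the fourth-order computations of Theorem~\ref{thm:cov4th}. The paper's approach, on the other hand, has the advantage that the same coordinate parametrization is reused verbatim for the $q=4$ case.

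One small slip to fix: in your last paragraph the bookkeeping ``$k^2\cdot 2 + (1-k^2)\cdot 2 - (\text{correction})$'' is off. The term $k^2 f(z)^2(\|\gamma_z\|^2-(N-1))$ has expectation $k^2\cdot\E[f(z)^2]\cdot\E[\|\gamma_z\|^2-(N-1)]=k^2\cdot 1\cdot 0=0$ by the independence of $f(z)$ and $\gamma_z$, not $k^2\cdot 2$. The cross term is zero by oddness, and only the $(1-k^2)\langle\gamma_z,w\rangle^2$ term contributes, giving $(1-k^2)\cdot 2$ directly---so the answer $2(1-k^2)/(N-1)$ falls out with no ``correction'' needed.
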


\begin{theorem}\label{thm:cov4th}
{\footnotesize\begin{align}
& \E \kop \textcolor{black} {H_4(f(x))H_4(f(z))}\pok  =  4!  k(x,z)^4 \nonumber \\
& \E \bigg\{ \textcolor{black}{H_2(f(x)) \fint_{S(Y(x)^\perp)} H_2(\gamma_x^\perp v) dv}  \ \cdot \ \textcolor{black}{H_2(f(z))   \fint_{S(Y(z)^\perp)} H_2(\gamma_z^T v) dv}\bigg\} \nonumber \\
& \hspace{3.75cm}  =  k(x,z)^4 \frac8{(N-1)^2} + k(x,z)^2 \frac{4(N-4)}{(N-1)^2} + \frac4{(N-1)^2}  \nonumber  \\       
& \E \kop \textcolor{black}{\fint_{S^{N-2}} H_4(\gamma_x^T v) dv} \cdot \textcolor{black}{\fint_{S^{N-2}} H_4(\gamma_z^T v) dv} \pok = \label{eq:covFourth2} k(x,z)^4 \cdot \frac{9\cdot 4!}{(N^2-1)^2} +  k(x,z)^2 \frac{6 \cdot 4! (N-2)}{(N^2-1)^2} + \frac{3\cdot 4! N(N-2)}{(N^2-1)^2}  \\
& \E \kop \textcolor{black}{H_4(f(x))} \cdot \textcolor{black}{H_2(f(z)) \fint_{S(Y(x)^\perp)} H_2(\gamma_z^T v) dv}     \pok = \frac{4!}{N-1} \left( k(x,z)^2 - k(x,z)^4\right)  \nonumber  \\
& \E \kop \textcolor{black}{H_4(f(x))} \cdot  \textcolor{black}{\fint_{S(Y(z)^\perp)} H_4(\gamma_z^T v) dv} \pok  = \frac{3\cdot 4!}{N^2-1} \left( 1 - 2k(x,z)^2 + k(x,z)^4\right)  \nonumber  \\
& \E \bigg\{ \textcolor{black}{H_2(f(x)) \fint_{S(Y(x)^\perp)} H_2(\gamma_x^\perp v) dv} \ \cdot \  \textcolor{black}{\fint_{S(Y(z)^\perp)} H_4(\gamma_z^T v) dv} \bigg\} \nonumber  \\
& \hspace{3.75cm} = \frac{4!(N-2)}{(N^2-1)(N-1))} - k(x,z)^2  \frac{4!(N-5)}{(N^2-1)(N-1))}  - \frac{4!3}{(N^2-1)(N-1))} k(x,z)^4 \nonumber 
    \end{align}}
\end{theorem}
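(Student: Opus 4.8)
The plan is to reduce every covariance in the statement to the expectation of a product of Gaussian linear forms, to compute it by the diagram (Wick) formula, and then to evaluate the resulting elementary spherical integrals. Recall from \cref{thm:Main3} and its surroundings that $f(x)=\langle\gamma,\y(x)\rangle$, that for a unit vector $v\in\y(x)^{\perp}$ one has $\gamma_x^{T}v=\gamma^{T}v$ (a standard Gaussian, independent of $f(x)$), and that $k(x,z)=\langle\y(x),\y(z)\rangle$. Each of the three building blocks $H_4(f(x))$, $H_2(f(x))\fint_{S(Y(x)^{\perp})}H_2(\gamma_x^{T}v)\,dv$ and $\fint_{S(Y(x)^{\perp})}H_4(\gamma_x^{T}v)\,dv$ is an element of the fourth Wiener chaos with bounded coefficients: indeed $H_2(\gamma^{T}a)H_2(\gamma^{T}b)={:}(\gamma^{T}a)^{2}(\gamma^{T}b)^{2}{:}$ whenever $a\perp b$, and Wick monomials of four linear forms lie in $C_4$. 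Exchanging the finite spherical averages with the expectation then turns each of the seven covariances into a finite linear combination of double integrals $\fint_{S(Y(x)^{\perp})}\fint_{S(Y(z)^{\perp})}(\cdots)\,dv\,dw$ whose integrand is a polynomial in the four inner products $\langle\y(x),\y(z)\rangle=k$, $\langle\y(x),w\rangle$, $\langle v,\y(z)\rangle$ and $\langle v,w\rangle$ (when one of the two factors is $H_p(f(x))$ alone, no spherical average appears on that side).

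For the integrands I would use the identity $\E[{:}U_1U_2U_3U_4{:}\,{:}V_1V_2V_3V_4{:}]=\sum_{\sigma}\prod_{i}\E[U_iV_{\sigma(i)}]$, the sum running over the $4!$ permutations $\sigma$ of $\{1,2,3,4\}$, with $U_1,\dots,U_4$ (resp.\ $V_1,\dots,V_4$) the four linear forms attached to $x$ (resp.\ to $z$). Grouping the permutations by how many ``$\y(x)$-legs'' are matched to ``$\y(z)$-legs'' yields, for $H_2(f(x))H_2(\gamma_x^{T}v)\cdot H_2(f(z))H_2(\gamma_z^{T}w)$, an integrand with three shapes, $k^{2}\langle v,w\rangle^{2}$, $k\,\langle\y(x),w\rangle\langle v,\y(z)\rangle\langle v,w\rangle$ and $\langle\y(x),w\rangle^{2}\langle v,\y(z)\rangle^{2}$, with multiplicities $4$, $16$ and $4$; the other six cases give shorter expressions (e.g.\ $\E[H_4(f(x))H_4(f(z))]=4!\,k^{4}$ directly by Mehler's formula, and $\E[H_4(f(x))H_2(f(z))H_2(\gamma_z^{T}w)]=4!\,k^{2}\langle\y(x),w\rangle^{2}$). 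As an alternative to the diagram formula one could use the isometry $C_4\cong\mathbb T(4,N)$ of \cref{r:doh} and contract symmetric tensors, but the direct route is cleaner.

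The double spherical integrals are then computed by integrating in $v$ and then in $w$. Since $v\perp\y(x)$ one has $\langle v,\y(z)\rangle=\langle v,P_{\y(x)^{\perp}}\y(z)\rangle$ with $\|P_{\y(x)^{\perp}}\y(z)\|^{2}=1-k^{2}$ and $\langle v,w\rangle=\langle v,P_{\y(x)^{\perp}}w\rangle$, so the $v$-integral is a moment of a uniform point on the $(N-2)$-sphere against one or two fixed directions, controlled by $\fint_{S^{N-2}}w_iw_j\,dw=\tfrac{1}{N-1}\delta_{ij}$ and $\fint_{S^{N-2}}w_1^{4}\,dw=\tfrac{3}{(N-1)(N+1)}$, equivalently by the constants $\beta(N-1,\cdot)$ of \cref{eq:beta}. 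The remaining integral over $w\in S(Y(z)^{\perp})$ is handled the same way, using $\|P_{\y(z)^{\perp}}\y(x)\|^{2}=1-k^{2}$ and, when both $\langle\y(x),w\rangle$ and $\langle v,w\rangle$ survive, the contraction $\langle P_{\y(z)^{\perp}}\y(x),P_{\y(z)^{\perp}}v\rangle=\langle\y(x),v\rangle-k\langle v,\y(z)\rangle=-k\langle v,\y(z)\rangle$. Collecting terms gives, in each case, a polynomial of degree at most $4$ in $k$ with coefficients rational in $N$; substituting $k=k(x,z)$ produces the seven identities. \cref{thm:cov2nd} is the same computation truncated at the second chaos.

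The substantive difficulty is organizational rather than conceptual. The two integration spheres $S(Y(x)^{\perp})$ and $S(Y(z)^{\perp})$ live in distinct $(N-1)$-dimensional hyperplanes meeting in codimension two, so one must track carefully the projections $P_{\y(x)^{\perp}}$, $P_{\y(z)^{\perp}}$, the several distinct shapes arising in the Wick sum, and the normalization of the spherical averages. A convenient device is to fix once and for all the orthonormal pair $e_1=\y(x)$, $e_2=(\y(z)-k\,\y(x))/\sqrt{1-k^{2}}$ spanning $\mathrm{span}\{\y(x),\y(z)\}$, rewrite every inner product in these coordinates, and reduce each spherical integral to a product of one-dimensional Beta moments; carrying this out mechanically delivers all the identities of \cref{thm:cov4th}.
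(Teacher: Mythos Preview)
Your proposal is correct and follows essentially the same route as the paper: reduce each covariance via the Hermite identity $\E[H_q(\gamma^Ta)H_q(\gamma^Tb)]=q!(a^Tb)^q$ to a double spherical integral of a polynomial in $k=\langle\y(x),\y(z)\rangle$, $\langle\y(x),w\rangle$, $\langle v,\y(z)\rangle$, $\langle v,w\rangle$, and then evaluate by choosing coordinates aligned with $\mathrm{span}\{\y(x),\y(z)\}$.

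The only noteworthy difference is tactical. For the pure term $\fint H_4(\gamma_x^Tv)\,dv\cdot\fint H_4(\gamma_z^Tw)\,dw$, the paper does not compute the spherical moments directly as you propose; instead it substitutes $v=\eta_x/\|\eta_x\|$, $w=\eta_z/\|\eta_z\|$ with $\eta_x,\eta_z$ \emph{independent} standard Gaussians in $Y(x)^\perp$ and $Y(z)^\perp$, so that
\[
\fint\!\!\fint (v^Tw)^4\,dv\,dw=\frac{\E[(\eta_x^T\eta_z)^4]}{\E\|\eta_x\|^4\,\E\|\eta_z\|^4},
\]
and then expands $\eta_x^T\eta_z=\cos\vartheta\,\gamma_1\xi_1+\gamma_2\xi_2+\cdots$ in the chosen frame. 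This buys a one-line Gaussian moment computation (and a short recursion, Lemma~\ref{lem:4thScalarProduct}) in place of your Beta-moment bookkeeping. For the mixed terms the paper proceeds exactly as you do, cf.\ the proof of \cref{thm:cov2nd}. Either method delivers the stated polynomials in $k(x,z)$.
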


\begin{proof}[Proof of \cref{thm:cov2nd}]
    The first equation follows by a standard property of Hermite polynomials, see e.g. \cite[Prop. 2.2.1]{nourdinpeccatibook}. Let us prove the second one. Given $Y(x)$, $Y(z)\in\R^{N}$ as in \eqref{eq:deff}, we define $\vartheta$ as the angle such that  \begin{align}\label{eq:costheta}
    \cos\vartheta :  = \left\langle \frac{Y(x)}{\|Y(x)\|},\frac{Y(z)}{\|Y(z)\|}\right\rangle.
\end{align}
As a consequence, $\cos\vartheta=k(x,z)$\footnote{Recall that if $M=S^2$, and if $Y=(Y_{\ell,1},\ldots, Y_{\ell,2\ell+1})$ has orthonormal components, then $\left\langle Y(x),Y(z) \right\rangle=\sum_{m} \overline{Y_{\ell m} (x)} Y_{\ell m}(z) = \frac{2\ell +1}{4\pi} P_\ell(\langle x,z\rangle) $, where $P_\ell$ is the $\ell$-th Legendre polynomial. Indeed, in this case $k(x,z)=P_\ell(\langle x,z\rangle)$.}.
Observe that the following double sphere integral only depends on $\vartheta$: 
\begin{equation}
     \fint_{S(Y(x)^\perp)} \fint_{S(Y(z)^\perp)}  |v^T w|^2 dv dw =: I(\vartheta).
\end{equation}
Observe that if $\gamma_x\randin Y(x)^\perp$ is a standard Gaussian random vector (in particular, it has rank $N-1$ and is supported on $Y(x)^\perp\cong \mathbb R^{N-1}$), and $v\randin S(Y(x)^\perp)$ is uniformly distributed on the sphere, then $\gamma_x$ and $ \|\gamma_x\| v$ share the same law. The same holds for $w$ and $\gamma_z$. Moreover, we have mutual independence among $v$, $w$, $ \|\gamma_x\|$,  and $ \|\gamma_z\|$. So,  
\begin{equation}
    I(\vartheta) = \frac{\E[|\langle\gamma_x,\gamma_z\rangle|^2]}{E[\|\gamma_x\|^2]E[\|\gamma_z\|^2]}.
\end{equation} 
Since $E[\|\gamma_x\|^2] = N-1$, we only need to compute $\E[|\langle\gamma_x,\gamma_z\rangle|^2]$. We can assume that \begin{align}
    \gamma_x = \begin{pmatrix}
        0,\\ \gamma_1\\ \gamma_2 \\ \vdots \\ \gamma_{N-1}
    \end{pmatrix} \text{ and } \gamma_z = \begin{pmatrix}
        -\sin \vartheta \xi_1\\ \cos\vartheta \xi_1\\\xi_2  \\ \vdots \\ \xi_{N-1}
    \end{pmatrix},
\end{align}
where $\vartheta$ is as in \eqref{eq:costheta}, and $\gamma_i$, $\xi_j$ are iid $\mathcal N(0,1)$. Therefore, \begin{align}
    \E[ |\langle\gamma_x , \gamma_z \rangle|^2] & = \E[(\cos\vartheta \gamma_1\xi_1+\gamma_2\xi_2 + \ldots + \gamma_{N-1}\xi_{N-1})^2] \\
    & = \cos^2\vartheta + N-2.
\end{align}
Putting all together, we get that \begin{equation}
    \frac12 \ \E \kop \fint_{S(Y(x)^\perp)} H_2(\gamma_x^\perp v) dv \cdot \fint_{S(Y(z)^\perp)} H_2(\gamma_z^\perp v) dv \pok  = I(\vartheta) = \frac{\cos^2\vartheta + N-2}{(N-1)^2},
\end{equation}
which proves the second equality. The third equation can be proved applying a similar strategy. Let $\vartheta$ be defined as before, and recall that we have defined the unit vector $\y(x)=Y(x) \|Y(x)\|^{-1}\in S^{N-1}$. Without loss of generality, we suppose that $\y(x)=e_1$. Then, \begin{equation}
        v = \begin{pmatrix}
        -\sin\vartheta \ v_1\\ \cos\vartheta\ v_1\\v_2  \\ \vdots \\ v_{N-1},
    \end{pmatrix}
    \end{equation} 
    where $(0,v_1,\ldots,v_{N-1})\in S(Y(x)^\perp)$. Since $v\in Y(z)^\perp$, we have that \begin{equation}
    \E \qwe f(x)    \gamma_z^T v   \ewq = \E \qwe \gamma^T \y(x)    \gamma^T v   \ewq = \E \qwe \gamma^T \y(x)    \gamma^T v   \ewq = \y(x)^Tv = - v_1 \sin\vartheta.
    \end{equation}
    This implies that 
    \begin{equation}
         \E \left( H_2(f(x)) dx \cdot  \fint_{S(Y(z)^T)} H_2(\gamma_z^T v) dv dz \right)  = \frac{2}{N-1}\left(1  -  k(x,y)^2\right).
    \end{equation}
\end{proof}


\begin{proof}[Proof of \cref{thm:cov4th}]
    We only prove \cref{eq:covFourth2}, regarding the covariance of the field $\fint_{S^{N-2}} H_4(\gamma_x^\perp v)dv$. The remaining equations can be proved in complete analogy with the argument of \cref{thm:cov2nd}. Since $v\in Y(x)^\perp$ and $w\in Y(z)^\perp$, we may write $\gamma_x^T v = \gamma^T v$ and $\gamma_z^T v = \gamma^T v$. Then, \begin{align}
        \E\kop H_4(\gamma_x^T v) H_4(\gamma_z^T v) \pok & = 4! \fint \fint (v^T w)^4 dv dw \\
        & = 4! \frac{\E \kop(\cos\theta \gamma_1\xi_1 + \gamma_2\xi_2 + \ldots + \gamma_{N-1}\xi_{N-1})^4\pok}{\E \kop \|\gamma_x\|^4 \pok \E \kop \|\gamma_z\|^4 \pok}
    \end{align}
    where $\gamma_i,\xi_i$ are iid $\mathcal N(0,1)$. Observe that if $\gamma\sim \mathcal N(0,1_{N-1})$ we have $\E \|\gamma\|^4 = N^2 - 1$. Moreover, for two independent Gaussian vectors $\hat \gamma,\hat \xi\sim \mathcal N(0,1_{N-2})$, applying \cref{lem:4thScalarProduct} in the last step, we have \begin{align}
        \E & \kop(\cos\theta \gamma_1\xi_1 + \gamma_2\xi_2 + \ldots + \gamma_{N-1}\xi_{N-1})^4\pok  = \E \kop(\cos\theta \gamma_1\xi_1 + \langle \hat\gamma,\hat\xi \rangle)^4\pok  \\
        & =  9 \cos^4\theta + 6 \cos^2\vartheta  \E \kop\langle \hat\gamma,\hat\xi \rangle^2\pok + \E \kop\langle \hat\gamma,\hat\xi \rangle^4\pok \\
        & = 9 \cos^4\theta + 6 \cos^2\vartheta  (N-2) + 3N(N-2).
    \end{align}
    Recalling that $\cos\theta = k(x,y)$, we conclude.
\end{proof}
\begin{lemma}\label{lem:4thScalarProduct}
    Let us consider two independent Gaussian vectors $\xi$, $\eta\sim\mathcal N(0,\mathbbm{1}_N)$, $N\ge 1$. Then \begin{equation}
        \E[(\langle \xi, \eta \rangle)^4] = 3N(N+2).
    \end{equation}
\end{lemma}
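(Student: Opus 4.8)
The plan is to condition on one of the two vectors and reduce the computation to a moment of a chi-squared distribution. Write $\langle \xi, \eta\rangle = \sum_{i=1}^N \xi_i\eta_i$. Since $\xi$ and $\eta$ are independent, conditionally on $\eta$ the random variable $\langle \xi,\eta\rangle$ is a centered Gaussian with variance $\sum_{i=1}^N \eta_i^2 = \|\eta\|^2$. Using the classical formula for the fourth moment of a centered Gaussian, namely $\E[Z^4] = 3\sigma^4$ when $Z\sim\mathcal N(0,\sigma^2)$, one gets
\[
\E\big[(\langle \xi,\eta\rangle)^4 \,\big|\, \eta\big] = 3\|\eta\|^4,
\]
and hence, taking expectations, $\E[(\langle\xi,\eta\rangle)^4] = 3\,\E[\|\eta\|^4]$.

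It then remains to compute $\E[\|\eta\|^4]$. Since the coordinates of $\eta$ are i.i.d. standard Gaussians, $\|\eta\|^2$ follows a chi-squared distribution with $N$ degrees of freedom; in particular $\E[\|\eta\|^2] = N$ and $\Var(\|\eta\|^2) = 2N$. Consequently
\[
\E[\|\eta\|^4] = \Var(\|\eta\|^2) + \big(\E[\|\eta\|^2]\big)^2 = 2N + N^2 = N(N+2),
\]
so that $\E[(\langle\xi,\eta\rangle)^4] = 3N(N+2)$, as claimed.

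As an alternative (purely combinatorial) route, one may expand $(\sum_i \xi_i\eta_i)^4 = \sum_{i,j,k,\ell}\xi_i\xi_j\xi_k\xi_\ell\,\eta_i\eta_j\eta_k\eta_\ell$ and take expectations termwise, using independence of $\xi$ and $\eta$ together with the vanishing of odd Gaussian moments: only the $N$ diagonal terms (all indices equal, contributing $\E[\xi_i^4]\E[\eta_i^4]=9$) and the $3N(N-1)$ terms splitting into two distinct matched pairs (each contributing $1$) survive, giving $9N + 3N(N-1) = 3N(N+2)$. Either way the argument is elementary; there is no real obstacle, the only point worth care being the correct bookkeeping of the chi-squared moment (resp. the pairing count) so that the final constant comes out as $3N(N+2)$ rather than, say, $3N^2$.
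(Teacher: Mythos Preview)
Your proof is correct. Both routes you give --- the conditioning argument via $\E[(\langle\xi,\eta\rangle)^4\mid\eta]=3\|\eta\|^4$ followed by the chi-squared second moment, and the direct Isserlis-type expansion --- are valid and cleanly executed.

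The paper takes a different path: it argues by induction on $N$, writing $\langle\xi,\eta\rangle = \xi_1\eta_1 + \langle\hat\xi,\hat\eta\rangle$ with $\hat\xi,\hat\eta\in\R^{N-1}$, expanding the fourth power, and obtaining the recursion $a_N = 9 + 6(N-1) + a_{N-1}$ with $a_1=9$, which solves to $3N(N+2)$. Your conditioning argument is more conceptual and slightly shorter, since it reduces the whole computation to a single known moment of $\chi^2_N$; the paper's recursion is equally elementary but requires checking the initial case and solving the recurrence. Your combinatorial alternative is essentially the same computation as the paper's recursion unrolled in one step. All three approaches are of comparable difficulty; yours avoid the (mild) bookkeeping of the induction.
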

\begin{proof} Enough to observe that $a_N:=\E[(\langle \xi, \eta \rangle)^4]$, for $N\ge1$, obeys the recursive relation \begin{equation} \begin{cases}
    a_N &= 9 + 6(N-1) + a_{N-1} = 6 N + 3 + a_{N-1}, \qquad N\ge 2,\\
    a_1 &= 9.
\end{cases}
    \end{equation}
\end{proof}

{\teal 

}

{ 

\section{Appendix: Proof of Theorem \ref{t:nonmale} and Ancillary Statements}\label{A:SHproofs}

\subsection{On Wick Products}\label{ss:AAWick}

Fix an integer $N \ge 1$. For every $q \ge 1$ and every vector 
$(i_1,\dots,i_q) \in [N]^q$, we define the \emph{multiplicity vector}
\begin{equation}\label{e:multiplicity}
  \alpha(i_1,\dots,i_q) = (\alpha_1,\dots,\alpha_N) \in \mathbb{N}^N
\end{equation}
by letting $\alpha_\ell$ denote, for each $\ell = 1,\dots,N$, the number of
indices $r \in \{1,\dots,q\}$ such that $i_r = \ell$; in particular,
$\sum_{\ell=1}^N \alpha_\ell = q$.\footnote{For instance, if $N=4$, $\alpha(1,2) = (1,1,0,0)$, $\alpha(2,2,4) = (0,2,0,1)$, $\alpha(4,2,4,1,3) = (1,1,1,2)$, and so on and so forth.  } We also set $\alpha(i_1,...,i_q)! := \prod_{\ell=1}^N \alpha_\ell!$, and note that, if $\alpha(i_1,...,i_q) = \alpha(j_1,...,j_p)$, then necessarily $p=q$.

\smallskip

\begin{remark}\label{r:combi}
Given $(i_1,...,i_q)\in [N]^q$, there exist exactly
    $$
    \binom{q}{\alpha(i_1,...,i_q)}:= \frac{q!}{\alpha(i_1,...,i_q)!}
    $$
    vectors $(j_1,...,j_q)$ such that $\alpha(i_1,...,i_q) = \alpha(j_1,...,j_q)$.
\end{remark}

\smallskip

Let $\gamma = (\gamma^1,...,\gamma^N)\sim \mathcal{N}(0, \mathbb{I}_N)$. According e.g. to \cite[Definition 3.1]{JansonBook}, given $(i_1,...,i_q)\in [N]^q$, the {\it Wick product} of the random variables $\gamma^{i_1},...,\gamma^{i_q}$ is given by
\begin{equation}\label{e:wickdef}
    :\gamma^{i_1}\cdots \gamma^{i_q}:\quad := (\gamma^{i_1}\cdots \gamma^{i_q})[q]= \prod_{\ell=1}^N H_{\alpha_\ell}(\gamma^\ell),
\end{equation}
where $(\alpha_1,...,\alpha_N) = \alpha(i_1,...,i_q)$ is the multiplicity vector associated with $(i_1,...,i_q)$, and $H_m$ is the $m$th Hermite polynomial---see Definition \ref{d:wiener}. The following relation is easily verified: for every $(i_1,...,i_q)\in [N]^q$ and $(j_1,...,j_p)\in [N]^p$,
\begin{equation}\label{e:cowick}
\mathbb{E}[:\gamma^{i_1}\cdots \gamma^{i_q}:\,\times\,:\gamma^{j_1}\cdots \gamma^{j_p}:] = \begin{cases}
\alpha(i_1,...,i_q)!, & \mbox{if}\,\,\, \alpha(i_1,...,i_q) =\alpha(j_1,...,i_p)  ,\\
0,  & \mbox{otherwise}.
\end{cases}
\end{equation}
In particular, combining \eqref{e:cowick} with Remark \ref{r:combi} yields the isometric formula \eqref{e:innertenso}.

\medskip

\subsection{Two Lemmas}\label{ss:2L}
For $N \ge 2$, let $\gamma = (\gamma^1,\dots,\gamma^N)\sim \mathcal{N}(0,\mathbb{I}_N)$, 
let $E(\gamma)$ be square-integrable, and keep the notation \eqref{e:wienerexp} and \eqref{e:tensorq}---\eqref{e:hompqS} (from Section \ref{ss:furthnot}). The following lemma plays an important role throughout the paper.

\begin{lemma}[\bf Averaging trick]\label{l:magictrick} 
Let above notation prevail and let $E : \R^N\to \R$ be a 0-homogeneous mapping (in the sense of Definition \ref{d:0hom}) such that $E(\gamma)$ is square-integrable. Then, 
\bega \label{e:winwin}
\fint_{S^{N-1}}P_q(v) dv = 0, \quad \mbox{for all}\,\,q\geq 1.
\eega
In particular, ${\rm Tr}\, K_2 = 0$, and therefore $K_2\in \mathbb{T}_0(2,N)$.
\end{lemma}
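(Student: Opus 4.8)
The plan is to average the identity \eqref{e:hompqS} over the unit sphere and then exploit the rotational symmetry of the uniform measure together with the $0$-homogeneity of $E$. First I would recall that, by \eqref{e:hompqS}, for every $v\in S^{N-1}$ one has $P_q(v)=\mathbb{E}\bigl[E(\gamma)\,H_q(\langle\gamma,v\rangle)\bigr]$; integrating this against the normalized uniform measure on $S^{N-1}$ and applying Fubini's theorem -- legitimate since, by Cauchy--Schwarz and \eqref{e:orthohermite}, $\mathbb{E}\bigl[|E(\gamma)|\,|H_q(\langle\gamma,v\rangle)|\bigr]\le\|E(\gamma)\|_{L^2}\sqrt{q!}$ uniformly in $v\in S^{N-1}$, and the sphere has finite measure -- yields
\[
\fint_{S^{N-1}}P_q(v)\,dv=\mathbb{E}\!\left[E(\gamma)\fint_{S^{N-1}}H_q(\langle\gamma,v\rangle)\,dv\right].
\]

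The key observation is then that, on the event $\{\gamma\neq 0\}$, writing $\gamma=\|\gamma\|\,U$ with $U=\gamma/\|\gamma\|\in S^{N-1}$ and using the orthogonal invariance of $dv$, the inner integral equals $\fint_{S^{N-1}}H_q(\|\gamma\|\,v_1)\,dv=:\psi_q(\|\gamma\|)$, i.e.\ it is a deterministic function of $\|\gamma\|$ alone. Since $E$ is $0$-homogeneous (Definition~\ref{d:0hom}), $E(\gamma)=E(U)$ is $\sigma(U)$-measurable, hence independent of $\|\gamma\|$, so $E(\gamma)$ and $\psi_q(\|\gamma\|)$ are independent and
\[
\fint_{S^{N-1}}P_q(v)\,dv=\mathbb{E}[E(\gamma)]\cdot\mathbb{E}[\psi_q(\|\gamma\|)].
\]
To conclude the proof of \eqref{e:winwin} it then suffices to note that $\mathbb{E}[\psi_q(\|\gamma\|)]=\fint_{S^{N-1}}\mathbb{E}[H_q(\langle\gamma,v\rangle)]\,dv=0$ for every $q\ge 1$, because $\langle\gamma,v\rangle\sim\mathcal{N}(0,1)$ for each $v\in S^{N-1}$ and $\mathbb{E}[H_q(Z)]=0$ for $Z\sim\mathcal{N}(0,1)$ whenever $q\ge 1$ (this is \eqref{e:orthohermite} with $\ell=0$).

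For the final assertion I would specialize to $q=2$: there $P_2(v)=2\,v^{T}K_2v$, and since $\fint_{S^{N-1}}v_iv_j\,dv=\delta_{ij}/N$ one gets $0=\fint_{S^{N-1}}P_2(v)\,dv=\frac{2}{N}\,{\rm Tr}\,K_2$, whence ${\rm Tr}\,K_2=0$ and $K_2\in\mathbb{T}_0(2,N)$. (Equivalently, pairing $E(\gamma)[2]$ with the second-chaos random variable $\|\gamma\|^2-N=\sum_{i}:\gamma^i\gamma^i:$ gives $2\,{\rm Tr}\,K_2$ by \eqref{e:innertenso}, while $0$-homogeneity and the independence of $U$ and $\|\gamma\|$ force this pairing to equal $\mathbb{E}[E(\gamma)]\,\mathbb{E}[\|\gamma\|^2-N]=0$.) There is no genuine obstacle here; the only steps that deserve a line of justification are the uniform-in-$v$ bound making Fubini applicable and the elementary fact that averaging $H_q(\langle\gamma,\cdot\rangle)$ over $S^{N-1}$ produces a radial function of $\gamma$, after which the classical independence of the direction and the norm of a Gaussian vector closes the argument.
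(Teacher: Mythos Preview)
Your proof is correct and follows essentially the same approach as the paper's: both arguments start from \eqref{e:hompqS}, apply Fubini, exploit the polar decomposition $\gamma=\|\gamma\|\,U$ together with the $0$-homogeneity of $E$ and the independence of $U$ and $\|\gamma\|$, and conclude using $\E[H_q(Z)]=0$ for $Z\sim\mathcal{N}(0,1)$. The only cosmetic difference is that the paper carries out the polar change of variables first (writing $\gamma=w\|\gamma\|$ with $w$ uniform) and then swaps the roles of the two spherical variables, whereas you first average over $v$ to produce the radial function $\psi_q(\|\gamma\|)$ and then factor the expectation; your added justification of Fubini and the alternative pairing argument for ${\rm Tr}\,K_2=0$ are welcome extras.
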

\begin{proof} One has that 
\bega\notag
\fint_{S^{N-1}}P_q(v) dv&=\fint_{S^{N-1}}\E\kop E(\gamma) H_q\tyu \langle \gamma ,v\rangle \uyt \pok dv
\\
&=\fint_{S^{N-1}}\fint_{S^{N-1}}\E\kop E\tyu w \|\gamma\|\uyt H_q\tyu \langle w\|\gamma\| ,v\rangle \uyt \pok dvdw
\\
&=\fint_{S^{N-1}}E\tyu w \uyt\E\kop  H_q\tyu \langle w,\gamma\rangle \uyt \pok dw=0,
\eega
where we have used the 0-homogeneity of $E(\cdot)$. 
To deal with the second part of the statement, we first observe that, by definition,
\begin{equation}\label{e:rrep2}
P_2(v) = 2\, v^T K_2v, \quad v\in S^{N-1},
\end{equation}
so that the identity \eqref{e:winwin} (in the case $q=2$) yields that 
$$ 0 = \fint_{S^{N-1}}P_2(v) dv = 2\fint_{S^{N-1}}v^T K_2 v\,  dv  =\frac{2}{N}{\rm Tr}\, K_2 ,$$ 
and therefore the desired conclusion.
\end{proof}

\medskip 

The following statement is the combinatorial key to prove Theorem \ref{t:nonmale}.
\begin{lemma}\label{l:wicknowick}
For every $q\geq 1$ and for every traceless tensor $K\in \mathbb{T}_0(q,N)$, one has that 
\begin{equation}\label{e:zz}
\sum_{(i_1,...,i_q)\in [N]^q} K(i_1,...,i_q) \, :\gamma_{i_1}\cdots\gamma_{i_q}:\,\,\quad  =\!\!\! \sum_{(i_1,...,i_q)\in [N]^q} K(i_1,...,i_q) \, \gamma_{i_1}\cdots\gamma_{i_q} \,, \end{equation}
that is, the homogeneous polynomial associated with $K$ coincides with the polynomial obtained by replacing products with Wick products.
\end{lemma}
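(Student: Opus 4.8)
The plan is to expand both sides of \eqref{e:zz} into the Hermite/Wick basis and to show that the "correction terms" that appear when converting ordinary products into Wick products all vanish because $K$ is traceless. Recall (Definition~\ref{d:wiener} and Appendix~\ref{ss:AAWick}) that for a single repeated variable one has the classical product-to-Hermite formula $x^{n} = \sum_{j=0}^{\lfloor n/2\rfloor} \frac{n!}{2^j j!\,(n-2j)!} H_{n-2j}(x)$, and more generally a monomial $\gamma^{i_1}\cdots\gamma^{i_q}$ with multiplicity vector $\alpha = \alpha(i_1,\dots,i_q)$ equals $\prod_{\ell=1}^N (\gamma^\ell)^{\alpha_\ell}$, which by the single-variable expansion becomes a linear combination of Wick products $:\gamma^{j_1}\cdots\gamma^{j_p}:$ whose multiplicity vectors $\beta$ are obtained from $\alpha$ by lowering some coordinates by even amounts; the top term ($p=q$, $\beta=\alpha$) is exactly $:\gamma^{i_1}\cdots\gamma^{i_q}:$ with coefficient $1$, and every strictly lower term corresponds to "contracting" one or more pairs of equal indices.

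First I would set up this bookkeeping precisely: write
\[
\gamma_{i_1}\cdots\gamma_{i_q} \;=\; :\gamma_{i_1}\cdots\gamma_{i_q}: \;+\; \sum_{\substack{\text{pairings of}\\ \text{a nonempty subset}}} (\text{contracted Wick product}),
\]
where each correction term, after summing over $(i_1,\dots,i_q)$ against $K(i_1,\dots,i_q)$, produces an expression in which at least one pair of summation indices has been identified (set equal to a common index $j$ and summed over $j\in[N]$). The crucial observation is that such a contraction, applied to a symmetric tensor $K$, yields precisely a partial trace $\sum_{j=1}^N K(j,j,i_3,\dots,i_q)$ (times lower-order Wick products in the remaining variables), and this is zero by the tracelessness hypothesis \eqref{e:traceless}. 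Hence every correction term vanishes and \eqref{e:zz} follows.

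The cleanest way to carry this out is probably by induction on $q$, or alternatively by a direct generating-function argument: one can encode the product-to-Wick conversion as $\gamma_{i_1}\cdots\gamma_{i_q} = \sum_{m\geq 0} \frac{1}{m!}\,(\text{contract } m \text{ pairs})$, and observe that contracting one pair $(i_a,i_b)$ and summing $\sum_{(i_1,\dots,i_q)} K(i_1,\dots,i_q)(\cdots)$ factors through $\sum_{j} K(\dots,j,\dots,j,\dots) = 0$ by symmetry of $K$ plus \eqref{e:traceless}. Since $m=0$ reproduces the left-hand side of \eqref{e:zz} (Wick product with coefficient one) and all $m\geq 1$ terms vanish, the identity is proved. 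I would state the single-pair contraction identity as a small sublemma and then note that any multi-pair contraction contains at least one pair, hence is killed already at the first contraction.

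The main obstacle is purely notational: keeping track of which subset of the $q$ indices gets paired, handling the combinatorial multiplicities correctly (the $\frac{n!}{2^j j!(n-2j)!}$ factors, and the fact that different index vectors $(i_1,\dots,i_q)$ with the same multiplicity vector contribute, cf. Remark~\ref{r:combi}), and making sure the "partial trace" really factors out cleanly from the remaining Wick product on the uncontracted variables. I expect no genuine difficulty beyond this bookkeeping — the mathematical content is entirely the single line "a contraction of a symmetric traceless tensor is zero." A careful statement of the product-to-Wick expansion, together with the symmetry of $K$ used to move any contracted pair into the first two slots, should make the argument short.
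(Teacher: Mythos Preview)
Your proposal is correct and follows essentially the same approach as the paper. The paper organizes the bookkeeping you describe by invoking directly the pair-partition expansion $\gamma_{i_1}\cdots\gamma_{i_q} = \sum_{\pi\in\mathcal{P}_{\le 2}[q]} \prod_{\{a,b\}\in\pi}\delta_{i_a i_b}\,:\!\prod_{\{u\}\in S_\pi}\gamma_{i_u}\!:$ from \cite[Corollary~3.17]{JansonBook}, which is exactly the ``sum over pairings of a nonempty subset'' formula you wrote down informally; after that, the argument is identical to yours --- each non-trivial partition has at least one pair, and summing the corresponding $\delta$ against $K$ produces a partial trace that vanishes.
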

\begin{proof} The statement is trivial for $q=1,2$, so that one can focus on the case $q\geq 3$. We need some further notation. We denote by $\mathcal{P}_{\leq 2}[q]$ the collection of all partitions of $[q]$ having blocks at most of size $2$. Given $\pi\in \mathcal{P}_{\leq 2}[q]$, we write $S_\pi$ for the collection of all singletons of $\pi$, and observe that, writing $|A|$ for the number of blocks in $A\subseteq\pi$, one has that
$$
|\pi| = |S_\pi| +|S_\pi^c|, \quad \mbox{and}\quad q = |S_\pi| +2|S_\pi^c|. 
$$
We also write $\hat{0}$ for the element of $\mathcal{P}_{\leq 2}[q]$ whose elements are all singletons (so that $\hat{0} =S_{\hat{0}}$). The crucial element in our proof is the following consequence of \cite[Corollary 3.17]{JansonBook} (see also \cite[Chapter 6]{PecTaqqBook}): for every $(i_1,...,i_q)\in [N]^q$,
$$
\gamma_{i_1}\cdots \gamma_{i_q} = \sum_{\pi\in \mathcal{P}_{\leq 2}[q]} \prod_{\{a,b\}\in \pi} \delta_{i_a i_b} : \prod_{\{u\}\in S_\pi} \gamma_{i_u} : ,
$$
where $\delta_{ij}$ is Kronecker's symbol. 
Exploiting this identity, one sees that the difference between the right-hand side and the left-hand side of \eqref{e:zz} is given by 
$$
\sum_{(i_1,..,i_q)\in [N]^q} K(i_1,...,i_q) \sum_{\substack{\pi\in\mathcal{P}_{\le 2}[q] \\ \pi\neq \hat{0}}} \prod_{\{a,b\}\in \pi} \delta_{i_a i_b} : \prod_{\{u\}\in S_\pi} \gamma_{i_u} :,
$$
and some elementary algebraic steps yield that this quantity equals in turn
$$
\sum_{\substack{\pi\in\mathcal{P}_{\le 2}[q] \\ \pi\neq \hat{0}}} \,\,\sum_{(\ell_1,...,\ell_{|S_\pi|})\in [N]^{|S_\pi|} }\,\, : \prod_{t=1}^{|S_\pi|} \gamma_{\ell_t}:  \sum_{(j_1,...,j_{|S^c_\pi|})\in [N]^{|S^c_\pi|}} K(\ell_1,...,\ell_{|S_\pi|}, j_1,j_1,...,j_{|S_\pi^c|},j_{|S_\pi^c| }) =0,
$$
since $K\in \mathbb{T}_0(q,N)$ by assumption. This proves the desired conclusion.
\end{proof}

\medskip 

\subsection{End of the proof of Theorem \ref{t:nonmale}}


Formula \eqref{e:notnaiveatall} is a consequence of Lemma \ref{l:wicknowick}, yielding that, for every traceless $A\in \mathbb{T}_0(q,N)$, the expectation
$$
Z :=\mathbb{E}\left[ \sum_{(i_1,...,i_q)\in [N]^q} K^{\rm (TL)}_q(i_1,...,i_q) \, U^{i_1}\cdots U^{i_q} \times \sum_{(i_1,...,i_q)\in [N]^q} A_q(i_1,...,i_q) \, U^{i_1}\cdots U^{i_q}\right]
$$
equals exactly
\begin{eqnarray*}
&& \frac{1}{\mathbb{E}[\|\gamma\|^{2q}]} \mathbb{E}\left[ \sum_{(i_1,...,i_q)\in [N]^q} K^{\rm (TL)}_q(i_1,...,i_q) \, \gamma^{i_1}\cdots \gamma^{i_q} \times \sum_{(i_1,...,i_q)\in [N]^q} A(i_1,...,i_q) \, \gamma^{i_1}\cdots \gamma^{i_q}\right]\\
&& = \frac{1}{\mathbb{E}[\|\gamma\|^{2q}]} \mathbb{E}\left[ \sum_{(i_1,...,i_q)\in [N]^q} \!\!\!\!\!K^{\rm (TL)}_q(i_1,...,i_q) \, :\gamma^{i_1}\cdots \gamma^{i_q}: \times\!\!\! \sum_{(i_1,...,i_q)\in [N]^q} \!\!\! \!\!A(i_1,...,i_q) \, :\gamma^{i_1}\cdots \gamma^{i_q}:\right]\\
&& = \frac{1}{\mathbb{E}[\|\gamma\|^{2q}]} \mathbb{E}\left[ \sum_{(i_1,...,i_q)\in [N]^q} K_q(i_1,...,i_q) \, :\gamma^{i_1}\cdots \gamma^{i_q}: \times\!\!\!\! \sum_{(i_1,...,i_q)\in [N]^q} A(i_1,...,i_q) \, :\gamma^{i_1}\cdots \gamma^{i_q}:\right]\\
&&= \frac{1}{\mathbb{E}[\|\gamma\|^{2q}]} \mathbb{E}\left[ E(\gamma)\times \sum_{(i_1,...,i_q)\in [N]^q} A(i_1,...,i_q) \, \gamma^{i_1}\cdots \gamma^{i_q}\right],
\end{eqnarray*}
where the second equality follows from the isometric relation \eqref{e:innertenso}, as well as from the orthogonal decomposition $K_q =K^{\rm (TL)}_q+ K^{\rm (Tr)}_q$, see \eqref{e:tracelessD}. Since $E(\gamma) = E(U)$ by $0$-homogeneity, the previous relation yields
$$
Z = c_{q,N}^{-1} \, \mathbb{E}\left[E(U)\times \sum_{(i_1,...,i_q)\in [N]^q} A_q(i_1,...,i_q) \, U_{i_1}\cdots U_{i_q}\right],
$$
which is the desired conclusion. The last part of the statement is a direct consequence of Lemma \ref{l:magictrick}.

\bigskip


}


\begin{remark}[\emph{On scalar products}] For the homogeneous polynomial $x^\a=x_1^{\a_1} \dots x_N^{\a_N}$, represented by a tensor $K_\a$, we have:
\begin{gather} 
K_\a(i_1,\dots,i_q)=\frac{\a_1!\dots \a_N!}{q!}\begin{cases}
    1, & \text{if $\#\tyu\kop \ell\in \{1,\dots,q\}|i_\ell=j\pok\uyt=\a_j\ \forall j\in \{1,\dots,N\}$}
    \\
    0, & \text{otherwise}
\end{cases}
\\
\|K_\a\|^2_{\R^{[N]^q}}=\frac{\a_1!\dots \a_N!}{q!}
=\|x^\a\|^2_{L^2(S(\C^N))}\binom{q+N-1}{q}, \\ \E\kop |:\gamma^\a:|^2\pok=
\prod_{j=1}^N\E\kop |H_{\a_j}(\gamma_j)|^2\pok=\a_1!\dots \a_N!
\end{gather}
In the last equation, we are using that, on homogeneous polynomials, the Wick product coincides with the chaotic projection, so that $:\gamma_1^{\a_1}\dots \gamma_N^{\a_N}:=H_{\a_1}(\gamma_1)\dots H_{\a_N}(\gamma_N)$. For all these scalar products, the monomials form an orthogonal basis, so that from the above relation we deduce that
\be 
\E\kop |X|^2\pok={q!}\|K\|^2_{\R^{[N]^q}},
\ee
whenever $X=\sum_{(i_1,...,i_q)\in [N]^q} K(i_1,...,i_q) \, :\gamma_{i_1}\cdots\gamma_{i_q}:$. This means that the mapping defined by $K\mapsto X$ is an isometry from the space of symmetric tensors $(\mathbb{T}(q,N),{q!}\langle\cdot,\cdot \rangle_{\R^{[N]^q}})$, endowed with the component-wise scalar product rescaled by $q!$, that is (since $K$ and $H$ are symmetric),
\be 
q!\langle K,H \rangle_{\R^{[N]^q}}=\sum_{ i_1,\dots,i_q\in \{1,\dots, N\}}\sum_{\tau\in \Sigma_q}K(i_{\tau(1)},\dots,i_{\tau(q)})H(i_{\tau(1)},\dots,i_{\tau(q)}),
\ee 
to the $q$-th chaos space $ (C_q, \langle\cdot,\cdot \rangle_{L^2(\P)})$. Hence, the Wick product $p(x)\mapsto :p(\gamma):$ identifies both spaces isometrically with the space $\R[x_1,\dots,x_N]_{(q)}$ of homogeneous polynomials, endowed with the Bombieri-Weyl (or Kostlan) norm, defined by

\[
\|x^\a\|^2_{Kostlan}=\|x^\a\|^2_{Bombieri-Weyl}
=\|x^\a\|^2_{L^2(S(\C^N))}\binom{q+N-1}{q}.
\]
\end{remark}


\medskip


\bibliographystyle{abbrv}
\bibliography{bs.bib}

\end{document}